\newtheorem{thm}{Theorem}[section]
\newtheorem{lem}[thm]{Lemma}
\newtheorem{cor}[thm]{Corollary}
\newtheorem{prop}[thm]{Proposition}
\theoremstyle{definition}
\newtheorem{defi}[thm]{Definition}
\newtheorem{example}[thm]{Example}
\theoremstyle{remark}
\newtheorem{rem}[thm]{Remark}
\newcommand{\R}{\mathbb R}
\newcommand{\N}{\mathbb N}
\newcommand{\newatop}[2]{\genfrac{}{}{0pt}{1}{#1}{#2}}
\newcommand{\vu}{u}
\newcommand{\vv}{v}
\newcommand{\rk}{\mbox{\rm{rk}}}
\newcommand{\fr}{f}
\newcommand{\eps}{\varepsilon}
\newcommand{\spc}{\mathrm{Spc}}
\newcommand{\I}{\mbox{\rm{im\,}}\!}
\numberwithin{equation}{section}
\begin{document}

\title[Stability of  persistence spaces]{Stability of  persistence spaces of vector-valued continuous functions}

\author{A. Cerri}
\address{Istituto di Matematica Applicata e Tecnologie Informatiche ``Enrico Magenes'', National Council of Research, Via de Marini 6, I-16149 Genova, Italia\newline
ARCES, Universit\`a di
Bologna, via Toffano $2/2$, I-$40135$ Bologna, Italia}
\email{andrea.cerri@ge.imati.cnr.it}

\author{C. Landi}
\address{Dipartimento di Scienze e Metodi dell'Ingegneria, Universit\`a di Modena e Reggio Emilia, Via Amendola 2, Pad. Morselli, I-42100 Reggio Emilia, Italia\newline ARCES, Universit\`a di Bologna, via Toffano
$2/2$, I-$40135$ Bologna, Italia} 
\email{claudia.landi@unimore.it}

\subjclass[2010]{Primary 68U05; Secondary 55N05}

\date{} 


\keywords{Multidimensional persistence, persistent Betti numbers, multiplicity, homological critical value}

\begin{abstract}
Multidimensional persistence modules do not admit a concise representation analogous to that provided by persistence diagrams for real-valued functions. However, there is no obstruction for multidimensional persistent Betti numbers to admit one. Therefore, it is reasonable to look for a generalization of persistence diagrams concerning those properties that are related only to persistent Betti numbers. In this paper, the {\em persistence space} of a vector-valued continuous function is introduced to generalize the concept of persistence diagram in this sense. The main result is its stability under function perturbations: any change in vector-valued functions implies a not greater change in the Hausdorff distance between their persistence spaces. 
\end{abstract}

\maketitle

\section{Introduction}
Topological data analysis deals with the study of global features of data to extract information about the phenomena that data represent. The persistent homology approach to topological data analysis is based on computing homology groups at different scales to see which features are long-lived and which are short-lived. The basic assumption is that relevant features and structures are the ones that persist longer. 

In classical persistence,  a topological space $X$ is explored through the evolution of the sublevel sets of a real-valued continuous function $f$ defined on $X$. The role of $X$ is to represent the data set, while $f$ is a descriptor of some property which is considered relevant for the analysis. These sublevel sets, being nested by inclusion, produce a filtration of $X$.  Focusing on the occurrence of important topological events along this filtration -- such as the birth and death of connected components, tunnels and voids -- it is possible to obtain a global description of data, which can be formalized via an algebraic structure called a \emph{persistence module} \cite{CaCo*09}. Such information can be encoded in a parameterized version of the Betti numbers, known in the literature as \emph{persistent Betti numbers} \cite{EdHa10}, a \emph{rank invariant} \cite{CaZo09} and -- for the 0$th$ homology -- a \emph{size function} \cite{FrLa01}. The key point is that these descriptors can be represented in a very simple and concise way, by means of multi-sets of points called \emph{persistence diagrams}. Moreover, they are stable with respect to the bottleneck  and  Hausdorff distances, thus implying resistance to noise \cite{CoEdHa07}. Thanks to this property, persistence is a viable option for analyzing data from the topological perspective, as shown, for example, in a number of concrete problems concerning shape comparison and retrieval \cite{BiBa*12,CaZo*05,DiLaMe09}. 
 
A common scenario in applications is to deal with multi-parameter information. The use of vector-valued  functions  enables the study of multi-parameter filtrations, whereas a scalar-valued function only gives  a one-parameter filtration. Therefore, Frosini and Mulazzani \cite{FrMu99} and Carlsson and Zomorodian \cite{CaZo09} proposed \emph{multidimensional persistence} to analyze richer and more complex data. 

A major issue in multidimensional persistence is that, when filtrations depend on multiple  parameters, it is not possible to provide a complete and discrete representation for multidimensional persistence modules analogous to that provided by persistence diagrams for one-dimensional persistence modules \cite{CaZo09}. This theoretical obstruction discouraged so far the introduction of a multidimensional analogue of the persistence diagram. 

One can immediately see that the lack of such an  analogue is a severe drawback for the actual application of multidimensional persistence to the analysis of data. Therefore a natural question we may ask ourselves is the following one: In which other sense may we hope to construct a generalization of a persistence diagram for the multidimensional setting? 

Cohen-Steiner {\em et al.} \cite{CoEdHa07} showed that the persistence diagram satisfies the following important properties (see also \cite{CeDi*12} for the generalization from tame to arbitrary continuous functions):
\begin{itemize}
\item it can be defined via {\em multiplicities} obtained from persistent Betti numbers;
\item it allows to completely reconstruct persistent Betti numbers;
\item it is stable with respect to function perturbations;
\item the coordinates of its off-diagonal points are homological critical values. 
\end{itemize}  

Therefore, it is reasonable to require that a generalization of a persistence diagram for the multidimensional setting satisfies all these properties.  We underline that, because of the aforementioned impossibility result in \cite{CaZo09}, no  generalization of a persistence diagram exists that can achieve the goal of representing completely a persistence module, but only its persistent Betti numbers. For this reason, in this paper we will only study persistent Betti numbers and not persistence modules.

In the present work we introduce a \emph{persistence space} to generalize the notion of a persistence diagram in the  aforementioned sense. More precisely, we define a persistence space as a multiset of points defined via multiplicities. In the one-dimensional case it coincides with a persistence diagram. Moreover, it allows for a complete reconstruction of multidimensional persistent Betti numbers (Multidimensional Representation Theorem~\ref{representation}). These ideas were anticipated in \cite{CeLa13}.

Our main result is the stability of persistence spaces under function perturbations (Stability Theorem~\ref{stabilitySpaces}): the Hausdorff distance between the persistence spaces of two functions $f,g:X\to \R^n$ is never greater than $\max_{x\in X}\max_{1\le i\le n}|f_i(x)-g_i(x)|$.

As a further contribution of this paper we show that the coordinates of the off-diagonal points of a persistence space are multidimensional homological critical values (Theorem~\ref{cpt=>hcv}). 

{\em Outline.} In Section~\ref{background} we review the basics on multidimensional persistent Betti numbers functions and we fix notations. In Section~\ref{spc} we look at discontinuity points of persistent Betti numbers functions in order to define multiplicity of points. Then persistence spaces are introduced and are proven to characterize persistent Betti numbers. We establish the stability result in  Section~\ref{stability}. In Section~\ref{hcv} we show that points of a persistence space have coordinates that are homological critical values. Section~\ref{conclusions} concludes the paper.

\section{Background on persistence}\label{background}
The main reference about multidimensional persistence modules is \cite{CaZo09}. As for multidimensional persistence Betti numbers, we refer the reader to \cite{CeDi*12}. In accordance with the main topic of this paper, in what follows we will stick to the notations and working assumptions adopted in the latter. 

Hereafter, $X$ is a topological space which is assumed to be compact and triangulable, and any function from $X$ to $\R^n$ is supposed to be continuous. When $\R^n$ is viewed as a vector space, its elements are denoted using overarrows. Moreover, in this case, we endow $\R^n$ with the max-norm defined by $\|\vec v\|_{\infty}=\max_i |v_i|$.   

For every $u=(u_1,\dots,u_n),v=(v_1,\dots,v_n)\in\R^n$, we write $u\preceq v$ (resp. $u\prec v$, $u\succ v$, $u\succeq v$) if and only if $u_i\leq v_i$ (resp. $u_i<v_i$, $u_i>v_i$, $u_i\geq v_i$) for all $i=1,\dots,n$. Note that $u\succ v$ is not the negation of $u\preceq v$. 

For every function $f:X\to\R^n$, we denote by $X\langle\fr\preceq u\,\rangle$ the sublevel set $\{x\in X:f(x)\preceq u\}$. We also use the following notations: $D_n^+$ will be the open set $\{(\vu,\vv)\in\R^n\times\R^n:\vu\prec\vv\}$, while $D_n=\{(\vu,\vv)\in D_n^+: \exists j\, \mbox{s.t.}\, \vu_j=\vv_j\}$. $D_n^*$ will denote the set $D_n^+\cup\{(u,\infty):u\in\R^n\}$. Finally, $\overline{D_n^*}=D_n^*\cup D_n$. Points of $D_n^+$ are called {\em proper points}, those of $D_n^*\setminus D_n^+$ are {\em points at infinity}.

For $u\preceq v$, we can consider the inclusion of $X\langle\fr\preceq u\,\rangle$ into $X\langle\fr\preceq v\,\rangle$. This inclusion induces a homomorphism $\iota^{\vu,\vv}_k:\check{H}_k(X\langle\fr\preceq\vu\rangle \rightarrow\check{H}_k(X\langle\fr\preceq\vv\rangle)$, where $\check{H}_k$ denotes the $k$th \v{C}ech homology group for every $k\in\mathbb{Z}$. The image of $\iota^{\vu,\vv}_k$ consists of the $k$-homology classes of cycles ``born'' no later than $\vu$ and ``still alive'' at $\vv$. The use of \v{C}ech homology will shortly be motivated. 

\begin{defi}[Multidimensional persistent homology group]\label{BornDeathClasses}
If $\vu\prec\vv$, the image of $\iota^{\vu,\vv}_k$ is called the {\em
multidimensional $k$th persistent homology group of $(X,\fr)$ at
$(\vu, \vv)$}.
\end{defi}

We assume to work with coefficients in a field $\mathbb{K}$. Hence homology groups are vector spaces, and homomorphisms induced in homology by continuous maps are linear maps. As usual, by the rank of a linear map we mean the dimension of its image. Thus the rank of $\iota^{\vu,\vv}_k$ completely determines  persistent homology groups, leading to the notion of \emph{persistent Betti numbers}.

\begin{defi}[Persistent Betti Numbers]\label{Rank}
The {\em persistent Betti numbers function of $\fr:X\to\R^n$} (briefly PBNs) is the function $\beta_{\fr}:D_n^+\to\N$ defined, for $(u,v)\in D_n^+$,  by
$$
\beta_{\fr}(\vu,\vv)=\rk \,\iota^{\vu,\vv}_k.
$$
\end{defi}

 Obviously, for each $k\in\mathbb{Z}$, we have different PBNs for $f$ (which should be denoted $\beta_{\fr,\,k}$, say) but, for the sake of notational simplicity, we omit adding any reference to $k$. This will also apply to the notations used for other concepts in this paper, such as multiplicities. Among the properties of PBNs, it is worth mentioning those useful in the rest of the paper.

\begin{prop}[Finiteness]\label{RemFiniteness}
For every $(\vu,\vv)\in D_n^+$, $\beta_f(\vu,\vv)<+\infty$.
\end{prop}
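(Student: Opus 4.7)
The plan is to factor the inclusion-induced map $\iota^{\vu,\vv}_k$ through the \v{C}ech homology of a compact polyhedron sandwiched between the two sublevel sets; since that homology is automatically finite-dimensional, so is the image of $\iota^{\vu,\vv}_k$.

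Concretely, I would first set $\delta=\min_{1\le i\le n}(v_i-u_i)$, which is strictly positive because $\vu\prec\vv$. By compactness of $X$ and continuity of $f$, the map $f$ is uniformly continuous, so iterated barycentric subdivision of any fixed triangulation of $X$ produces a triangulation $K$ in which every closed simplex $\sigma$ satisfies $\max_{x,y\in\sigma}\|f(x)-f(y)\|_{\infty}<\delta$.

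Next, I would let $L$ be the subcomplex of $K$ generated by those closed simplices that meet $X\langle f\preceq\vu\rangle$, and set $P=|L|$. Plainly $X\langle f\preceq\vu\rangle\subseteq P$; conversely, any $x\in P$ lies in some $\sigma\in L$ together with a point $y\in\sigma$ for which $f(y)\preceq\vu$, and the diameter bound then yields $f_i(x)<f_i(y)+\delta\le u_i+(v_i-u_i)=v_i$ for every $i$, so $P\subseteq X\langle f\preceq\vv\rangle$. The inclusion $X\langle f\preceq\vu\rangle\hookrightarrow X\langle f\preceq\vv\rangle$ thus factors through $P$, and applying $\check{H}_k$ factors $\iota^{\vu,\vv}_k$ through $\check{H}_k(P)$.

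Since $P$ is a compact polyhedron, its \v{C}ech homology agrees with its simplicial homology and is therefore finite-dimensional. Consequently $\rk\iota^{\vu,\vv}_k\le\dim\check{H}_k(P)<+\infty$. I expect the main delicate point to be verifying that $L$ really lies inside $X\langle f\preceq\vv\rangle$, which hinges on choosing the triangulation fine enough with respect to the $f$-oscillation on each simplex; the passage to \v{C}ech homology, needed because the sublevel sets themselves need not be triangulable, causes no additional trouble since $P$ is a polyhedron and the factorization is entirely functorial.
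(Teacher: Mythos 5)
Your proof is correct: sandwiching the compact polyhedron $P=|L|$ between the sublevel sets via a triangulation whose simplices have $f$-oscillation less than $\delta=\min_i(v_i-u_i)$, and factoring $\iota^{u,v}_k$ through the finite-dimensional space $\check{H}_k(P)$, is a sound argument, and your verification that $P\subseteq X\langle f\preceq v\rangle$ is exactly the delicate point and is handled properly. The paper itself gives no proof of this proposition --- it is stated as background imported from \cite{CeDi*12} --- and the argument used there (interposing a compact triangulable set between $X\langle f\preceq u\rangle$ and $X\langle f\preceq v\rangle$ and invoking finite-dimensionality of its homology) is essentially the one you give.
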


\begin{prop}[Monotonicity]\label{Monotonicity}
As an integer-valued function in $(\vu,\vv)\in D_n^+$, $\beta_{\fr}$ is non-decreasing in $\vu$ and non-increasing  in $\vv$.
\end{prop}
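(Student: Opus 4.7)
The plan is to exploit the functoriality of \v{C}ech homology under the inclusions among sublevel sets, factoring each comparison map $\iota^{\vu,\vv}_k$ through a suitable intermediate inclusion-induced map. Both monotonicity statements will then follow from the elementary observation that the rank of a composition of linear maps is bounded above by the rank of either factor.

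For monotonicity in $\vu$, I fix $\vv$ and take $\vu\preceq\vu'\prec\vv$. Since $f(x)\preceq\vu$ implies $f(x)\preceq\vu'$, the sublevel sets form the chain $X\langle f\preceq\vu\rangle\subseteq X\langle f\preceq\vu'\rangle\subseteq X\langle f\preceq\vv\rangle$, and functoriality of $\check{H}_k$ yields the factorization $\iota^{\vu,\vv}_k = \iota^{\vu',\vv}_k\circ\iota^{\vu,\vu'}_k$. Hence $\I\iota^{\vu,\vv}_k\subseteq\I\iota^{\vu',\vv}_k$, and taking dimensions gives $\beta_f(\vu,\vv)\leq\beta_f(\vu',\vv)$. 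Symmetrically, for monotonicity in $\vv$, I fix $\vu$ and take $\vu\prec\vv\preceq\vv'$; the chain $X\langle f\preceq\vu\rangle\subseteq X\langle f\preceq\vv\rangle\subseteq X\langle f\preceq\vv'\rangle$ produces $\iota^{\vu,\vv'}_k = \iota^{\vv,\vv'}_k\circ\iota^{\vu,\vv}_k$, so $\I\iota^{\vu,\vv'}_k$ is the image of $\I\iota^{\vu,\vv}_k$ under a linear map and therefore has dimension at most $\dim\I\iota^{\vu,\vv}_k$, i.e.\ $\beta_f(\vu,\vv')\leq\beta_f(\vu,\vv)$. In both cases, the hypotheses on $\vu,\vu',\vv,\vv'$ ensure that the relevant pairs lie in $D_n^+$ so that the PBNs are defined.

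There is no real obstacle here: the proof is essentially diagram chasing together with the basic rank inequality for compositions of linear maps. The particular choice of \v{C}ech homology plays no role in this statement — any functor from pairs (topological space, inclusion) to (vector space, linear map) would suffice — and so finiteness (Proposition~\ref{RemFiniteness}) is not needed either.
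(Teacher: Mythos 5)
Your argument is correct: the factorizations $\iota^{\vu,\vv}_k=\iota^{\vu',\vv}_k\circ\iota^{\vu,\vu'}_k$ and $\iota^{\vu,\vv'}_k=\iota^{\vv,\vv'}_k\circ\iota^{\vu,\vv}_k$ together with the rank inequality for compositions give exactly the two monotonicity statements, and you are right that neither finiteness nor any special feature of \v{C}ech homology is needed. The paper itself states this proposition without proof, citing \cite{CeDi*12}, and the argument used there is the same standard functoriality argument you give.
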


\begin{prop}[Right-Continuity]\label{Right}
As a function in $(\vu,\vv)\in D_n^+$, $\beta_{\fr}$ is right-continuous with respect to both $u$ and $v$, that is, $\lim_{\vu\to\bar\vu, \vu\succeq \bar\vu}\beta_f(\vu,\vv) = \beta_f(\bar \vu,\vv) $ and $\lim_{\vv\to\bar\vv, \vv\succeq\bar\vv}\beta_f(\vu,\vv) = \beta_f(\vu,\bar\vv)$.
\end{prop}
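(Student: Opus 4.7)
The approach is to prove right-continuity in $u$ and in $v$ separately, both relying on the continuity property of \v{C}ech homology for compact Hausdorff spaces; this is precisely the property that motivates the use of \v{C}ech homology in Definition~\ref{BornDeathClasses}. Fix $(\bar\vu,\vv)\in D_n^+$. Since $\beta_f(\cdot,\vv)$ is non-decreasing in $\vu$ (Proposition~\ref{Monotonicity}) and integer-valued, for any decreasing sequence $\vu_m\succeq\bar\vu$ with $\vu_m\to\bar\vu$ the values $\beta_f(\vu_m,\vv)$ form a non-increasing sequence of integers bounded below by $\beta_f(\bar\vu,\vv)$, hence eventually constant at some $L\geq\beta_f(\bar\vu,\vv)$. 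The task reduces to proving $L=\beta_f(\bar\vu,\vv)$.

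Write $A_m=X\langle f\preceq\vu_m\rangle$, $A=X\langle f\preceq\bar\vu\rangle$, and $B=X\langle f\preceq\vv\rangle$. Continuity of $f$ and compactness of $X$ give that each $A_m$ is closed, $A_{m+1}\subseteq A_m$, and $A=\bigcap_m A_m$. The continuity property of \v{C}ech homology for decreasing families of closed subsets of a compact Hausdorff space yields $\check{H}_k(A)\cong\varprojlim_m\check{H}_k(A_m)$. Let $P=\I\,\iota^{\bar\vu,\vv}_k$ and $P_m=\I\,\iota^{\vu_m,\vv}_k$, viewed as finite-dimensional (by Proposition~\ref{RemFiniteness}) subspaces of $\check{H}_k(B)$. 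Factoring $\iota^{\bar\vu,\vv}_k=\iota^{\vu_m,\vv}_k\circ\iota^{\bar\vu,\vu_m}_k$ shows $P\subseteq P_m$; the chain $P_m$ is decreasing with $\dim P_m=L$ eventually, so it stabilizes at some $P_\infty$ with $\dim P_\infty=L$. Everything reduces to showing $P_\infty\subseteq P$.

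Given $y\in P_\infty$, the fibres $E_m=(\iota^{\vu_m,\vv}_k)^{-1}(y)$ are non-empty cosets of $\ker\iota^{\vu_m,\vv}_k$ in $\check{H}_k(A_m)$, and the identity $\iota^{\vu_m,\vv}_k\circ\iota^{\vu_{m+1},\vu_m}_k=\iota^{\vu_{m+1},\vv}_k$ makes $\{E_m\}$ an inverse system. Any compatible family $(x_m)\in\varprojlim_m E_m$ lifts, by the \v{C}ech continuity isomorphism above, to an $x\in\check{H}_k(A)$ with $\iota^{\bar\vu,\vv}_k(x)=y$, so $y\in P$. Producing such a compatible family is a Mittag-Leffler type argument: one inductively chooses $x_m\in E_m$ and corrects by elements of $\ker\iota^{\vu_m,\vv}_k$ to achieve compatibility, exploiting that the images $P_m$ have stabilized so that no obstruction appears from below. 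I expect this to be the \emph{main obstacle}, because the groups $\check{H}_k(A_m)$ themselves are not known to be finite-dimensional, so the argument really does rest on controlling the inverse system through the finite-dimensional images.

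The right-continuity in $\vv$ is dual and in fact easier. Taking $\vv_m\succeq\bar\vv$ with $\vv_m\to\bar\vv$ decreasing, one has $X\langle f\preceq\bar\vv\rangle=\bigcap_m X\langle f\preceq\vv_m\rangle$; the sequence $\beta_f(\vu,\vv_m)$ is non-decreasing and converges to some $L'\leq\beta_f(\vu,\bar\vv)$. Since $\iota^{\bar\vv,\vv_m}_k$ maps $\I\,\iota^{\vu,\bar\vv}_k$ onto $\I\,\iota^{\vu,\vv_m}_k$, the shortfall $\beta_f(\vu,\bar\vv)-\beta_f(\vu,\vv_m)$ equals the dimension of $\ker\iota^{\bar\vv,\vv_m}_k\cap\I\,\iota^{\vu,\bar\vv}_k$. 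These kernels form a decreasing chain with trivial intersection, by the injectivity part of the \v{C}ech continuity isomorphism applied to $X\langle f\preceq\bar\vv\rangle=\bigcap_m X\langle f\preceq\vv_m\rangle$; hence the shortfall tends to zero and $L'=\beta_f(\vu,\bar\vv)$.
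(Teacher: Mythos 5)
Your overall strategy (reduce to monotone sequences, then invoke the continuity axiom of \v{C}ech homology for the nested family of compact sublevel sets) is the same idea the paper points to; note the paper does not actually prove this proposition but defers to \cite{CeDi*12}, saying only that the proof rests on the continuity axiom. Your argument for right-continuity in $v$ is complete and correct: the deficiency $\beta_f(u,\bar v)-\beta_f(u,v_m)$ equals $\dim\bigl(\ker\iota^{\bar v,v_m}_k\cap\mathrm{im}\,\iota^{u,\bar v}_k\bigr)$, these are nested subspaces of a finite-dimensional space, and the injectivity half of the continuity isomorphism forces their stabilized intersection to vanish.

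The $u$-direction, however, has a genuine gap exactly at the step you flag: the existence of a compatible family in $\varprojlim_m E_m$. Stabilization of the finite-dimensional images $P_m\subseteq\check{H}_k(B)$ does not control the inverse system of fibres. Concretely, in your inductive correction you can modify a lift $z\in E_{m+1}$ by elements of $\ker\iota^{u_{m+1},v}_k$ so as to hit a prescribed $x_m\in E_m$ only if $x_m$ already lies in $\mathrm{im}\,\iota^{u_{m+1},u_m}_k$ (one checks $\iota^{u_{m+1},u_m}_k(\ker\iota^{u_{m+1},v}_k)=\mathrm{im}\,\iota^{u_{m+1},u_m}_k\cap\ker\iota^{u_m,v}_k$); so you would need $x_m$ in the stable image $\bigcap_{m'>m}\iota^{u_{m'},u_m}_k(E_{m'})$, and nothing you have assumed guarantees these stable images are non-empty: they are decreasing non-empty affine cosets inside the possibly infinite-dimensional $\check{H}_k(A_m)$, and such chains can have empty intersection. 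Indeed the implication you rely on is false at the purely algebraic level: there are inverse sequences $V_{m+1}\to V_m$ with compatible maps to a line $W$ such that every image in $W$ is all of $W$ yet $\varprojlim_m V_m=0$ (take $V_m=\bigoplus_{i\ge m}\mathbb{K}$ with the inclusions and the coordinate-sum map to $\mathbb{K}$), so ``rank of the limit'' can be strictly smaller than the stabilized ranks. Hence the proof must use more about \v{C}ech theory than the bare continuity isomorphism plus finiteness of the $P_m$: for instance, that for compact spaces and field coefficients $\check{H}_k$ is linearly compact (an inverse limit, over finite open covers, of finite-dimensional spaces, with continuous induced maps and closed kernels), so the fibres $E_m$ are non-empty closed cosets and $\varprojlim_m E_m\neq\emptyset$ by a compactness argument; or, alternatively, pass to \v{C}ech cohomology, where continuity gives a direct limit, exactness of direct limits is automatic, and one concludes via equality of the ranks of a map and its dual. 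With such an ingredient supplied, your outline goes through; without it, the key step does not.
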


The latter property, proved in \cite{CeDi*12} for $n=1$ but  valid also for $n>1$, justifies the use of \v{C}ech theory. The proof is based of the continuity axiom of   \v{C}ech homology (the reader can refer to \cite{EiSt} for details). Using the right-continuity property, in \cite{CeDi*12} it has been proved that for $n=1$ a discrete set of points, called a  {\em persistence diagram}, completely describes  persistent Betti numbers,  without requiring tameness of functions.

\section{Persistence space}\label{spc}

The aim of this section is to introduce 
persistence spaces by analogy with persistence diagrams. 
In order to do this, we preliminarily study the behavior of discontinuity points of PBNs. In particular, we will prove some results about the propagation of discontinuities of PBNs (Corollary~\ref{MultiPropagation}) and about local constancy of PBNs (Proposition~\ref{Weps} and~\ref{Veps}). These facts will be used to introduce the notion of multiplicity of a point (either proper or at infinity). Points of a persistence space will be exactly those with a positive multiplicity. 

The main result of this section is that a persistence space is sufficient to reconstruct the underlying PBNs (Representation Theorem~\ref{representation}), in analogy with the one-dimensional framework (cf. the $k$-Triangle Lemma in \cite{CoEdHa07} and the Representation Theorem 3.11 in \cite{CeDi*12}).

\subsection{PBNs and discontinuities}
We recall that PBNs are  functions from $D_n^+$ to $\N$. Being integer-valued functions, PBNs have jump discontinuities (unless they are identically zero).  Precisely, discontinuity points are  points $(\vu,\vv)$ of $D_n^+$ such that in every neighborhood of $(\vu,\vv)$ in $D_n^+$ there is a point $(\vu',\vv')$ with  $\beta_f(\vu,\vv)\ne \beta_f(\vu',\vv')$.  We now study the behavior of discontinuities of PBNs. We start with some lemmas. Lemma~\ref{Jump} is analogous to \cite[Lemma 1]{FrLa01}, Lemma~\ref{LemmaDisc} is analogous to \cite[Lemma 2]{FrLa01}. Corollary~\ref{MultiPropagation} is analogous to \cite[Cor. 1]{FrLa01}. 
 
It is convenient to introduce the following notations. For $v\in\R^n$, $\beta_{f}(\cdot,v):\R^n\to\mathbb{N}$ denotes the function taking each $n$-tuple $u\prec v$ to the number $\beta_{f}(u,v)$. Analogous meaning will be given to $\beta_{f}(u,\cdot)$. For every $\bar{\vu}=(\bar{\vu}_1,\dots,\bar{\vu}_n)\in\R^n$,
we denote by $\R^n_{\pm}(\bar{\vu})$ the subset of $\R^n$ given by $\{\vu\in\R^n: \vu\prec\bar{\vu}\ \vee\ \vu\succ\bar{\vu}\}$. In particular, note that for a point $u=(u_1,\dots,u_n)$ in $\R^n_{\pm}(\bar{\vu})$ it holds that $u_i\neq\bar{\vu}_i$ for every $i=1,\dots,n$.

\begin{lem}[Multidimensional Jump Monotonicity]\label{Jump}
Let $\vu^{_1},\vu^{_2},\vv^{_1},\vv^{_2}\in\R^n$. If
$\vu^{_1}\preceq\vu^{_2}\prec\vv^{_1}\preceq\vv^{_2}$, then
$$\beta_{f}(\vu^{_2},\vv^{_1})-\beta_{f}(\vu^{_1},\vv^{_1})\ge
\beta_{f}(\vu^{_2},\vv^{_2})-\beta_{f}(\vu^{_1},\vv^{_2}).$$
\end{lem}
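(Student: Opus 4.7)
The plan is to reduce the inequality to a statement in linear algebra about the images of four compatible homomorphisms. Writing $X_w = X\langle f\preceq w\rangle$ for brevity, the hypothesis $u^{_1}\preceq u^{_2}\prec v^{_1}\preceq v^{_2}$ gives a chain of inclusions $X_{u^{_1}}\subseteq X_{u^{_2}}\subseteq X_{v^{_1}}\subseteq X_{v^{_2}}$, and hence a commutative diagram in homology whose maps $\iota^{\vu^{_i},\vv^{_j}}_k$ all factor through $\check H_k(X_{v^{_1}})$ or $\check H_k(X_{u^{_2}})$.

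First I would set
\[
A=\mathrm{im}\,\iota^{\vu^{_1},\vv^{_1}}_k,\quad B=\mathrm{im}\,\iota^{\vu^{_2},\vv^{_1}}_k,\quad C=\mathrm{im}\,\iota^{\vu^{_1},\vv^{_2}}_k,\quad D=\mathrm{im}\,\iota^{\vu^{_2},\vv^{_2}}_k,
\]
so that the four PBNs appearing in the statement are respectively $\dim A$, $\dim B$, $\dim C$, $\dim D$, all finite by Proposition~\ref{RemFiniteness}. By functoriality of the inclusion-induced maps, $\iota^{\vu^{_1},\vv^{_1}}_k$ factors as $\iota^{\vu^{_2},\vv^{_1}}_k\circ\iota^{\vu^{_1},\vu^{_2}}_k$, giving $A\subseteq B$; similarly $C\subseteq D$. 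Likewise, setting $\phi=\iota^{\vv^{_1},\vv^{_2}}_k$, the two identities $\iota^{\vu^{_1},\vv^{_2}}_k=\phi\circ\iota^{\vu^{_1},\vv^{_1}}_k$ and $\iota^{\vu^{_2},\vv^{_2}}_k=\phi\circ\iota^{\vu^{_2},\vv^{_1}}_k$ yield $\phi(A)=C$ and $\phi(B)=D$.

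The final step is pure linear algebra. Since $A\subseteq B$ and $\phi(A)=C\subseteq D=\phi(B)$, the linear map $\phi$ restricted to $B$ descends to a well-defined linear map
\[
\bar\phi\colon B/A\longrightarrow D/C,\qquad b+A\longmapsto \phi(b)+C,
\]
which is surjective because every element of $D$ is of the form $\phi(b)$ for some $b\in B$. Therefore $\dim(D/C)\le\dim(B/A)$, and substituting the four dimensions recovers exactly the desired inequality
\[
\beta_f(\vu^{_2},\vv^{_1})-\beta_f(\vu^{_1},\vv^{_1})\ge \beta_f(\vu^{_2},\vv^{_2})-\beta_f(\vu^{_1},\vv^{_2}).
\]

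The argument is essentially formal, and there is no genuine obstacle: the only thing one must be careful about is keeping the four maps straight and verifying the two factorizations through $\phi$. Everything else is routine diagram chasing, and the use of \v Cech homology plays no role here beyond the fact that inclusions induce linear maps between finite-dimensional $\mathbb{K}$-vector spaces.
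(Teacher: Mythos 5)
Your proof is correct, and it is essentially the rigorous form of the paper's own argument: the paper simply asserts that $\beta_f(u^{_2},v^{_1})-\beta_f(u^{_1},v^{_1})$ (resp.\ $\beta_f(u^{_2},v^{_2})-\beta_f(u^{_1},v^{_2})$) counts independent classes born between $u^{_1}$ and $u^{_2}$ and still alive at $v^{_1}$ (resp.\ $v^{_2}$), and that the former count dominates the latter. Your quotients $B/A$ and $D/C$ are exactly those counts, and the surjection $\bar\phi\colon B/A\to D/C$ induced by $\iota^{v^{_1},v^{_2}}_k$ is precisely the formal content of ``a class alive at $v^{_2}$ was already alive at $v^{_1}$,'' so the two arguments coincide in substance, with yours supplying the linear-algebra details the paper leaves implicit.
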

\begin{proof}
The value $\beta_{f}(\vu^{_2},\vv^{_1})-\beta_{f}(\vu^{_1},\vv^{_1})$
(resp. $\beta_{f}(\vu^{_2},\vv^{_2})-\beta_{f}(\vu^{_1},\vv^{_2})$)
represents, by Definition~\ref{BornDeathClasses}, the number of linearly independent homology classes of cycles ``born''
between $\vu^{_1}$ and $\vu^{_2}$, and ``still alive'' at $\vv^{_1}$ (resp. $\vv^{_2}$). Therefore, our claim follows by observing that the number of linearly independent homology classes born between $\vu^{_1}$ and $\vu^{_2}$ and still alive at $\vv^{_1}$ is certainly not smaller than the number of those still alive at $\vv^{_2}$.
\end{proof}

\begin{lem}\label{LemmaDisc}
In $D_n^+$, any  neighborhood of a discontinuity point of $\beta_f$ contains a point $(\vu,\vv)$ with $\beta_{f}(\cdot,\vv)$ discontinuous at $\vu$, or $\beta_{f}(\vu,\cdot)$ discontinuous at $\vv$. 
\end{lem}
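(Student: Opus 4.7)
The plan is to reduce a discontinuity of the two-variable function $\beta_f$ to a discontinuity of one of its coordinate sections, by using the fact that an integer-valued function on a connected set that is everywhere continuous must be constant.

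First, given a discontinuity point $(\bar\vu,\bar\vv)\in D_n^+$ and a neighborhood $W$ of it, I would shrink $W$ to a product $A\times B\subseteq W\cap D_n^+$ where $A$ is an open box around $\bar\vu$ and $B$ is an open box around $\bar\vv$, chosen small enough that $\vu\prec\vv$ for every $\vu\in A,\vv\in B$ (this is possible since $D_n^+$ is open). In particular, $A\times\{\bar\vv\}$ and $\{\bar\vu\}\times B$ as well as all ``mixed'' pairs $(\vu,\vv)$ with $\vu\in A$, $\vv\in B$ lie in $D_n^+$, so all the evaluations below make sense.

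Next, by the assumption that $(\bar\vu,\bar\vv)$ is a discontinuity point of $\beta_f$, there exists $(\vu',\vv')\in A\times B$ with $\beta_f(\vu',\vv')\ne \beta_f(\bar\vu,\bar\vv)$. Consider the intermediate value $\beta_f(\vu',\bar\vv)$: since the outer two values differ, either $\beta_f(\bar\vu,\bar\vv)\ne\beta_f(\vu',\bar\vv)$ or $\beta_f(\vu',\bar\vv)\ne\beta_f(\vu',\vv')$.

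In the first case, the function $\beta_f(\cdot,\bar\vv)$ restricted to $A$ is $\N$-valued and takes at least two distinct values (at $\bar\vu$ and $\vu'$). Since $A$ is connected and $\Z$-valued continuous images of connected sets are singletons, this function cannot be continuous everywhere on $A$: it must be discontinuous at some $\vu^*\in A$. Then $(\vu^*,\bar\vv)\in A\times B\subseteq W$ is a point with $\beta_f(\cdot,\bar\vv)$ discontinuous at $\vu^*$, as required. The second case is symmetric: $\beta_f(\vu',\cdot)$ on $B$ takes distinct values at $\bar\vv$ and $\vv'$, hence is discontinuous at some $\vv^*\in B$, and $(\vu',\vv^*)\in W$ does the job.

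The only mild obstacle is bookkeeping: one must verify that all the pairs $(\vu,\vv)$ appearing in the argument actually lie in $D_n^+$ so that $\beta_f$ is defined on them, and that the ``sectional'' discontinuity point stays inside the prescribed neighborhood $W$. Both are handled by fixing the box neighborhood $A\times B\subseteq W\cap D_n^+$ at the start; no use of Monotonicity~\ref{Monotonicity} or Right-Continuity~\ref{Right} is needed for this lemma, only the connectedness of open boxes and the integrality of $\beta_f$.
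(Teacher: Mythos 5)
Your proof is correct and follows essentially the same route as the paper: comparing $\beta_f(\bar\vu,\bar\vv)$, $\beta_f(\vu',\bar\vv)$, $\beta_f(\vu',\vv')$ is exactly the two-segment instance of the paper's path of segments along which $\vu$ or $\vv$ is held constant, and both arguments then extract a sectional discontinuity from the non-constancy of an integer-valued function on a connected set. Your explicit choice of the product box $A\times B$ and the connectedness bookkeeping just spell out what the paper leaves implicit.
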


\begin{proof}
Every  neighborhood  of $p$ in $ D_n^+$ contains an open hyper-cube $Q$ centered at $p$. If $p$ is a discontinuity point of $\beta_{f}$, there is a point $q\in Q$ with $\beta_f(p)\neq\beta_f(q)$. We can connect $p$ and $q$ by a path entirely contained in $Q$ made of segments such that either the $n$-tuple $u$ or the $n$-tuple $v$ is constant for all points $(u,v)$ of each such segment. $\beta_f$ cannot be constant along this path. This proves the claim.
\end{proof}

\begin{lem}\label{lemma0}

For every $(\bar{\vu},\bar{\vv})\in D_n^+$, the following statements hold:

\begin{enumerate}
\item[$(i)$] If $\bar{\vu}$ is a discontinuity point of $\beta_{f}(\cdot,\bar{\vv})$, then, for every real number $\varepsilon>0$, there is a point $u\in\R^n_{\pm}(\bar{\vu})$ such that $\|\vu-\bar\vu\|_\infty<\varepsilon$ and $\beta_f(\vu,\bar\vv)\ne \beta_f(\bar\vu,\bar\vv)$;
\item[$(ii)$] If $\bar{\vv}$ is a discontinuity point of $\beta_{f}(\bar{\vu},\cdot)$,  then, for every real number $\varepsilon>0$, there is a point $v\in\R^n_{\pm}(\bar{\vv})$ such that $\|\vv-\bar\vv\|_\infty<\varepsilon$ and $\beta_f(\bar\vu,\vv)\ne\beta_f(\bar\vu,\bar\vv)$.
\end{enumerate}
\end{lem}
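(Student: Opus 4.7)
The plan is to use the right-continuity of $\beta_f$ from Proposition~\ref{Right}, together with the fact that $\beta_f$ is integer-valued, to upgrade right-continuity to \emph{local constancy}: for each $(\bar u,\bar v)\in D_n^+$ there exists $\delta>0$ such that $\beta_f(u,\bar v)=\beta_f(\bar u,\bar v)$ whenever $\bar u\preceq u$ and $\|u-\bar u\|_\infty<\delta$ (for $\delta$ small, $u\prec\bar v$ is automatic since $\bar u\prec\bar v$). Part (ii) uses the analogous local constancy in $v$.

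For (i), given $\varepsilon\in(0,\delta)$, I would first use the discontinuity hypothesis on $\beta_f(\cdot,\bar v)$ to pick some $u^*\prec\bar v$ with $\|u^*-\bar u\|_\infty<\varepsilon/2$ and $\beta_f(u^*,\bar v)\ne\beta_f(\bar u,\bar v)$. Setting $u^{**}_i:=\max(u^*_i,\bar u_i)$ gives $u^{**}\succeq\bar u$ and $\|u^{**}-\bar u\|_\infty<\delta$, so local constancy yields $\beta_f(u^{**},\bar v)=\beta_f(\bar u,\bar v)$. Since $u^*\preceq u^{**}$, monotonicity in $u$ (Proposition~\ref{Monotonicity}) then forces $\beta_f(u^*,\bar v)\le\beta_f(\bar u,\bar v)$, hence strictly $\beta_f(u^*,\bar v)<\beta_f(\bar u,\bar v)$.

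Next I would push $u^*$ strictly below $\bar u$ in every coordinate while preserving the inequality: choose $\eta\in(0,\varepsilon/2)$ and set $u_i:=u^*_i$ when $u^*_i<\bar u_i$, and $u_i:=\bar u_i-\eta$ otherwise. Then $u\prec\bar u$ (so $u\in\R^n_\pm(\bar u)$), $\|u-\bar u\|_\infty<\varepsilon$, and $u\preceq u^*$, so monotonicity gives $\beta_f(u,\bar v)\le\beta_f(u^*,\bar v)<\beta_f(\bar u,\bar v)$, as required.

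Part (ii) is the mirror: local constancy in $v$ from above, combined with the \emph{non-increasing} monotonicity in $v$, forces $\beta_f(\bar u,v^*)>\beta_f(\bar u,\bar v)$ for a nearby witness $v^*$, and replacing each coordinate $v^*_i\ge\bar v_i$ by $\bar v_i-\eta$ yields $v\prec\bar v$ with $v\preceq v^*$ and, by monotonicity, $\beta_f(\bar u,v)\ge\beta_f(\bar u,v^*)>\beta_f(\bar u,\bar v)$. The only real subtlety in both parts is that the shifting step preserves the strict inequality; this works precisely because right-continuity pins down the \emph{direction} of the jump, so that moving $u$ (respectively $v$) further down only amplifies, or at least preserves, the discrepancy with $\beta_f(\bar u,\bar v)$.
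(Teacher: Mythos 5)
Your proof is correct, and it rests on the same essential mechanism as the paper's proof --- transporting a discontinuity witness to a point comparable with $\bar u$ (resp.\ $\bar v$) and using Proposition~\ref{Monotonicity} to preserve the discrepancy --- but it organizes the argument differently. The paper takes an arbitrary witness $u'$ and splits into the two cases $\beta_f(u',\bar v)<\beta_f(\bar u,\bar v)$ and $\beta_f(u',\bar v)>\beta_f(\bar u,\bar v)$, pushing the witness below $\bar u$ in the first case and above $\bar u$ in the second; only monotonicity is used. You instead invoke Proposition~\ref{Right} together with integer-valuedness to get local constancy of $\beta_f(\cdot,\bar v)$ on the cone $\{u\succeq\bar u\}$ near $\bar u$, and the comparison with $u^{**}=\max(u^*,\bar u)$ then rules out the second case altogether: every nearby witness must satisfy $\beta_f(u^*,\bar v)<\beta_f(\bar u,\bar v)$, so only the push-down step is needed. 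This buys a slightly stronger conclusion --- the point in $\R^n_{\pm}(\bar u)$ can always be taken with $u\prec\bar u$ (and $v\prec\bar v$ in $(ii)$) --- at the cost of using right-continuity, which the paper's symmetric two-case argument does not need. One small point to make explicit in $(ii)$: when you replace the coordinates $v^*_i\ge\bar v_i$ by $\bar v_i-\eta$, you should also require $\eta<\min_i(\bar v_i-\bar u_i)$, so that the resulting $v$ still satisfies $v\succ\bar u$ and hence $(\bar u,v)\in D_n^+$; in $(i)$ the analogous condition $u\prec\bar v$ is automatic from $u\preceq u^*\prec\bar v$, but in $(ii)$ it needs this (trivial) extra restriction on $\eta$.
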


\begin{proof}
$(i)$ If $\bar{\vu}$ is a discontinuity point of $\beta_{f}(\cdot,\bar{\vv})$, then, for every $\varepsilon>0$, there is a point $u'\in\R^n$ such that $\|\vu'-\bar\vu\|_\infty<\varepsilon$ and $\beta_f(\vu',\bar\vv)\neq\beta_f(\bar\vu,\bar\vv)$. Let us consider the case when $\beta_f(\vu',\bar\vv)<\beta_f(\bar\vu,\bar\vv)$. If $\vu'\notin \R^n_{\pm}(\bar{\vu})$, we take a point $\vu\in\R^n_{\pm}(\bar{\vu})$ such that $\vu\preceq\vu'$ and $\|\vu-\bar\vu\|_\infty<\varepsilon$. By the monotonicity of PBNs (Proposition~\ref{Monotonicity}), $\beta_f(\vu,\bar v)\leq\beta_f(\vu',\bar\vv)$. Hence, $\beta_f(\vu,\bar\vv) < \beta_f(\bar\vu,\bar\vv)$, yielding the claim. The case when  $\beta_f(\vu',\bar\vv)>\beta_f(\bar\vu,\bar\vv)$ can be handled in much the same way. 

$(ii)$ The proof is analogous.
\end{proof}

\begin{cor}\label{MultiPropagation}
For every $(\bar{\vu},\bar{\vv})\in D_n^+$, the following statements hold:
\begin{enumerate}
\item[$(i)$] If $\bar{\vu}$ is a discontinuity point of $\beta_{f}(\cdot,\bar{\vv})$, then it is a discontinuity point of  $\beta_{f}(\cdot,\vv)$  for every $\bar\vu\prec\vv\preceq\bar{\vv}$; 
\item[$(ii)$] If $\bar{\vv}$ is a discontinuity point of  $\beta_{f}(\bar{\vu},\cdot)$, then it is a discontinuity point of $\beta_{f}(\vu,\cdot)$  for every $\bar\vu\preceq\vu\prec\bar{\vv}$.
\end{enumerate}
\end{cor}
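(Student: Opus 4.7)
My plan is to prove only part (i), since part (ii) is entirely symmetric, interchanging the roles of birth and death parameters. Fix $v$ with $\bar{u}\prec v\preceq\bar{v}$. The goal is to exhibit, for every $\varepsilon>0$, a point $u'$ with $\|u'-\bar u\|_\infty<\varepsilon$, $u'\prec v$, and $\beta_f(u',v)\ne \beta_f(\bar u,v)$, which will show that $\bar u$ is a discontinuity point of $\beta_f(\cdot,v)$.

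The strategy is to use Lemma~\ref{lemma0}(i) at the given height $\bar v$ to locate a suitable $u$ near $\bar u$, and then to propagate the resulting jump down to height $v$ via the Multidimensional Jump Monotonicity Lemma~\ref{Jump}. Concretely, fix $\varepsilon>0$ and set $\varepsilon':=\min\bigl(\varepsilon,\min_{i}(v_i-\bar u_i)\bigr)$, which is positive because $\bar u\prec v$. Applying Lemma~\ref{lemma0}(i) to $\varepsilon'$, I obtain a point $u\in\R^n_{\pm}(\bar u)$ with $\|u-\bar u\|_\infty<\varepsilon'$ and $\beta_f(u,\bar v)\ne \beta_f(\bar u,\bar v)$.

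Now I split into two cases according to the two components of $\R^n_{\pm}(\bar u)$. If $u\prec\bar u$, monotonicity forces $\beta_f(u,\bar v)<\beta_f(\bar u,\bar v)$; since $u\prec\bar u\prec v\preceq \bar v$, Lemma~\ref{Jump} (with $u^1=u$, $u^2=\bar u$, $v^1=v$, $v^2=\bar v$) gives
\[
\beta_f(\bar u,v)-\beta_f(u,v)\ge \beta_f(\bar u,\bar v)-\beta_f(u,\bar v)>0,
\]
so $u'=u$ works. If instead $u\succ\bar u$, monotonicity now gives $\beta_f(u,\bar v)>\beta_f(\bar u,\bar v)$, and the choice of $\varepsilon'$ guarantees $u\prec v$, so the chain $\bar u\preceq u\prec v\preceq \bar v$ lets me apply Lemma~\ref{Jump} (with $u^1=\bar u$, $u^2=u$, $v^1=v$, $v^2=\bar v$) to obtain
\[
\beta_f(u,v)-\beta_f(\bar u,v)\ge \beta_f(u,\bar v)-\beta_f(\bar u,\bar v)>0,
\]
and again $u'=u$ works. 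Since $\varepsilon$ was arbitrary, $\bar u$ is a discontinuity point of $\beta_f(\cdot,v)$.

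The only subtlety I anticipate is the feasibility of the jump-monotonicity step when $u\succ\bar u$: one must ensure that the chosen $u$ lies strictly below $v$ so that $(u,v)\in D_n^+$ and the strict inequality $u\prec v$ required by Lemma~\ref{Jump} holds. This is exactly the reason for shrinking $\varepsilon$ to $\varepsilon'$, which uses $\bar u\prec v$ componentwise. Everything else is bookkeeping, and the case (ii) argument is obtained by dualizing: apply Lemma~\ref{lemma0}(ii) at $\bar u$, split into $v\prec\bar v$ and $v\succ\bar v$, and invoke Lemma~\ref{Jump} with the analogous quadruples, using $u\prec\bar v$ componentwise to keep $(u,v)\in D_n^+$.
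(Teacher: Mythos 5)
Your argument is correct and uses exactly the same ingredients as the paper's proof: Lemma~\ref{lemma0}$(i)$ to produce a nearby point of $\R^n_{\pm}(\bar\vu)$ with a different value of $\beta_f(\cdot,\bar\vv)$, the Multidimensional Jump Monotonicity Lemma~\ref{Jump} to transfer the jump between the heights $\bar\vv$ and $\vv$, and Proposition~\ref{Monotonicity} to fix the sign in each case. The only difference is presentational: you argue directly (propagating the jump from $\bar\vv$ down to $\vv$, with the $\varepsilon'$ shrink to keep $u\prec v$), whereas the paper runs the contrapositive as a proof by contradiction, so this is essentially the same proof.
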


\begin{proof}
We shall confine ourselves to prove only statement $(i)$. Indeed,
proving statement $(ii)$ is completely analogous.

Contrary to our claim assume that, for some $\vv$ with $\bar \vu \prec \vv \preceq \bar \vv$, $\beta_f(\cdot, \vv)$ is continuous at $\bar \vu$. Then $\lim_{\vu\to\bar\vu, \vu\succeq \bar\vu}\beta_f(\vu,\vv) - \beta_f(\bar \vu,\vv) = 0$.
Hence Lemma~\ref{Jump} together with the fact that PBNs are non-decreasing in $\vu$ (Proposition~\ref{Monotonicity}) imply that $\lim_{\vu\to\bar\vu, \vu\succeq \bar\vu}\beta_f(\vu,\bar\vv) - \beta_f(\bar \vu,\bar\vv) = 0$. Analogously, $\lim_{\vu\to\bar\vu, \vu\preceq \bar\vu}\left( \beta_f(\bar \vu,\bar\vv)-\beta_f(\vu,\bar\vv) \right)= 0$. Hence, for some sufficiently small $\varepsilon>0$, and for every $\vu\in\R^n_{\pm}(\bar{\vu})$ such that $\|\vu-\bar\vu\|_\infty<\varepsilon$, recalling that $\beta_f$ is integer-valued, we have $\beta_f(\vu,\bar\vv)= \beta_f(\bar\vu,\bar\vv)$. By Lemma~\ref{lemma0}$(i)$ this implies that $\bar\vu$ cannot be a discontinuity point of $\beta_f(\cdot,\bar v)$.
\end{proof}

The next two propositions (analogous to \cite[Prop. 6]{FrLa01} and \cite[Prop. 7]{FrLa01}, respectively) give some constraints on the
presence of discontinuity points of PBNs.  

\begin{prop}\label{Weps}
Let $\bar{p}=(\bar{\vu},\bar{\vv})$ be a proper point of
$D_n^+$. Then, a real number $\varepsilon>0$ exists, such that the
open set
$$W_{\varepsilon}(\bar{p})=\{(\vu,\vv)\in\R^n_{\pm}(\bar{\vu})\times\R^n_{\pm}(\bar{\vv}):\|\vu-\bar{\vu}\|_{\infty}<\varepsilon,\, \|\vv-\bar{\vv}\|_{\infty}<\varepsilon\}$$
is a subset of $ D_n^+$, and does not contain any discontinuity point of $\beta_{f}$.
\end{prop}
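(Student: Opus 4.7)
The plan is to argue by contradiction. First, I pick $\varepsilon_0$ with $0 < \varepsilon_0 < \min_i(\bar{\vv}_i - \bar{\vu}_i)/2$; the strict order $\bar{\vu} \prec \bar{\vv}$ then forces $W_{\varepsilon_0}(\bar{p}) \subseteq D_n^+$. Assume, contrary to the proposition, that for every $\varepsilon \in (0, \varepsilon_0)$ the set $W_\varepsilon(\bar{p})$ contains a discontinuity of $\beta_f$: this produces a sequence of discontinuity points $p_n \in W_{\varepsilon_n}(\bar{p})$ with $\varepsilon_n \searrow 0$, hence $p_n \to \bar{p}$. By Lemma~\ref{LemmaDisc} applied inside the open set $W_{\varepsilon_n}(\bar{p})$, I may replace each $p_n$ by a nearby point $(u_n', v_n') \in W_{\varepsilon_n}(\bar{p})$ at which either $\beta_f(\cdot, v_n')$ is discontinuous at $u_n'$ or $\beta_f(u_n', \cdot)$ is discontinuous at $v_n'$. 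Passing to subsequences, I assume the former occurs for all $n$ (the latter being entirely symmetric, via Corollary~\ref{MultiPropagation}(ii) and right-continuity in $v$), and that all $u_n'$ lie on the same side of $\R^n_{\pm}(\bar{\vu})$.

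\medskip

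The next step is to replace the moving $v_n'$ by a single auxiliary point. Fix $v^* = (\bar{\vu} + \bar{\vv})/2$, so that $\bar{\vu} \prec v^* \prec \bar{\vv}$. Since $u_n' \to \bar{\vu}$ and $v_n' \to \bar{\vv}$, one checks easily (whether $v_n' \succ \bar{\vv}$ or $v_n' \prec \bar{\vv}$) that $u_n' \prec v^* \preceq v_n'$ for all sufficiently large $n$. Corollary~\ref{MultiPropagation}(i) therefore propagates the discontinuity to the fixed fibre: $u_n'$ is a discontinuity of the single function $\beta_f(\cdot, v^*)$, which is bounded by Proposition~\ref{RemFiniteness}, integer-valued, and non-decreasing in $u$ by Proposition~\ref{Monotonicity}.

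\medskip

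The contradiction now splits into two subcases. If $u_n' \succ \bar{\vu}$: right-continuity of $\beta_f(\cdot, v^*)$ at $\bar{\vu}$ (Proposition~\ref{Right}) together with integer-valuedness yields some $\delta > 0$ such that $\beta_f(u, v^*) = \beta_f(\bar{\vu}, v^*)$ whenever $u \succeq \bar{\vu}$ and $\|u - \bar{\vu}\|_\infty < \delta$. The strict inequality $u_n' \succ \bar{\vu}$ combined with $u_n' \to \bar{\vu}$ guarantees that, for $n$ large, a whole open Euclidean ball around $u_n'$ lies inside $\{u \succeq \bar{\vu} : \|u - \bar{\vu}\|_\infty < \delta\}$, so $\beta_f(\cdot, v^*)$ is locally constant at $u_n'$, contradicting the discontinuity. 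If $u_n' \prec \bar{\vu}$: boundedness and integer-valuedness imply that $L := \max\{\beta_f(u, v^*) : u \prec \bar{\vu}\}$ is attained at some $u^\dagger \prec \bar{\vu}$; monotonicity then forces $\beta_f(u, v^*) = L$ for every $u$ with $u^\dagger \preceq u \prec \bar{\vu}$. Since $u^\dagger$ is fixed while $u_n' \to \bar{\vu}$, for $n$ large $u_n'$ lies in the interior of this box, where $\beta_f(\cdot, v^*)$ is constant; again $u_n'$ is a continuity point, a contradiction.

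\medskip

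The main obstacle I expect is precisely the subcase $u_n' \prec \bar{\vu}$: right-continuity (Proposition~\ref{Right}) gives no symmetric control from below, so the local constancy of $\beta_f(\cdot, v^*)$ just below $\bar{\vu}$ has to be squeezed out indirectly, by exploiting the finite range of $\beta_f$ to attain the supremum $L$ at some $u^\dagger$ and then letting monotonicity propagate that single value throughout the entire box $\{u^\dagger \preceq u \prec \bar{\vu}\}$. All the remaining arguments are routine.
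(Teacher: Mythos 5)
Your proof is correct, and while it shares the paper's skeleton (argue by contradiction, take discontinuity points of $\beta_f$ accumulating at $\bar p$, pass via Lemma~\ref{LemmaDisc} to one-variable discontinuities, and use Corollary~\ref{MultiPropagation} to push them onto a fixed slice), your finishing mechanism is genuinely different. The paper fixes the slice $\bar v_{-1/N}$, extracts a $\preceq$-monotone subsequence of the points $u^{j}$, and contradicts Proposition~\ref{RemFiniteness} by letting infinitely many unit jumps of the non-decreasing function $\beta_f(\cdot,\bar v_{-1/N})$ pile up below $\bar u_{+1/N}$, forcing the value $+\infty$ there. You instead show that the fixed slice $\beta_f(\cdot,v^*)$ is \emph{locally constant} on each strict side of $\bar u$: above, by right-continuity (Proposition~\ref{Right}) plus integer-valuedness; below, by attaining the maximum $L$ of the (finite, by Propositions~\ref{Monotonicity} and~\ref{RemFiniteness} at $(\bar u,v^*)$) values on $\{u\prec\bar u\}$ and propagating it through the box $\{u^{\dagger}\preceq u\prec\bar u\}$ by monotonicity, so that each individual $u_n'$ eventually sits in an open set of constancy, contradicting its being a discontinuity point. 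This buys you a cleaner, pointwise contradiction that avoids both the extraction of a $\preceq$-monotone subsequence in $\R^n$ and the informal ``infinitely many jumps $\Rightarrow$ infinite value'' count; the price is the use of Proposition~\ref{Right}, which the paper's proof of this proposition never invokes. One small remark on your claim of symmetry: in the case where the $v_n'$ carry the discontinuity, the subcase $v_n'\prec\bar v$ is handled with a \emph{minimum} (trivially attained for non-negative integers, no boundedness needed) rather than a maximum, because $\beta_f(u^*,\cdot)$ is non-increasing; so the two sides are symmetric only after flipping the extremum, which is worth a sentence but is not a gap.
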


\begin{proof}
Obviously, there always exists a sufficiently small $\varepsilon>0$ such that $W_{\varepsilon}(\bar{p})\subseteq D_n^+$ (see Figure~\ref{fig:Weps} to visualize this). Let us now fix $N\in\N$ such that $1/N<\varepsilon$, and suppose, contrary to our assertion, that for every $j\geq N$ a discontinuity point $p^{_j}$ of $\beta_{f}$ exists in $W_{1/j}(\bar{p})$. We want to prove that this contradicts the finiteness of $\beta_{f}$ (cf. Proposition~\ref{RemFiniteness}).

\begin{figure}
\psfrag{u}{$\bar u$}\psfrag{v}{$\bar v$}\psfrag{e}{$\varepsilon$}
\includegraphics[width=0.5\textwidth]{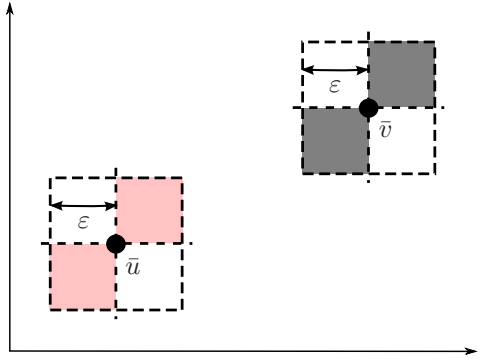}
\caption{Graphical representation of $W_{\varepsilon}(\bar{p})\subseteq D_n^+$, with $n=2$, corresponding to the Cartesian product of the (light) pink area with the (dark) gray one.}
\label{fig:Weps}
\end{figure}

We set $\bar{\vu}_{+1/N}=(\bar{\vu}_1+1/N,\dots,\bar{u}_n+1/N)$, $\bar\vv_{-1/N}=(\bar{\vv}_1-1/N,\dots,\bar{\vv}_n-1/N)$.

By the previous Lemma~\ref{LemmaDisc} we know that arbitrarily close to each $p^{_j}$ there is a point $q^{_j}=(u^{_j},v^{_j})$ such that either $u^{_j}$ is a discontinuity point of $\beta_{f}(\cdot,\vv^{_j})$,
or $\vv^{_j}$ is a discontinuity point of $\beta_{f}(\vu^{_j},\cdot)$. Therefore, possibly by extracting a subsequence from $(q^{_j})_{j\geq N}$, we can assume that each $u^{_j}$ is a discontinuity point of $\beta_{f}(\cdot,v^{_j})$. Since $q^{_j}$ can be taken arbitrarily close to $p^{_j}$, we can also assume that $\vu^j\preceq\bar{\vu}_{+1/N}$ and, possibly by considering again a subsequence, that either $\vu^{_j}\preceq\vu^{_{j+1}}\preceq\bar \vu$ for every $j\geq N$, or
$\bar\vu\preceq\vu^{_{j+1}}\preceq\vu^{_{j}}$ for every $j\geq N$.

By Corollary~\ref{MultiPropagation}$(i)$ it holds that $\vu^{_j}$ is a
discontinuity point of $\beta_{f}(\cdot,\vv)$, for every  $\vu^{_j}\prec \vv\preceq\vv^{_j}$. In particular, for $v=\bar v_{-1/N}$, $\beta_f(\cdot,v)$ has an integer jump at each $\vu^j$. Moreover, we have also that $\vu^{_j}\preceq\bar{\vu}_{+1/N}$ for every $j\geq N$. Therefore, since $\beta_{f}(\cdot,\bar \vv_{-1/N})$ is non-decreasing (cf. Proposition~\ref{Monotonicity}), and recalling that either $\vu^{_j}\preceq\vu^{_{j+1}}$ for every $j\geq N$, or $\vu^{_{j+1}}\preceq\vu^{_{j}}$ for every $j\geq N$, we deduce that $\beta_{f}(\bar{\vu}_{+1/N},\bar{\vv}_{-1/N})=+\infty$, thus contradicting the finiteness of $\beta_{f}$.
\end{proof}

\begin{prop}\label{Veps}
Let $\bar{p}=(\bar\vu,\infty)$ be a point at infinity of $D^*_n$. Then, a real number
$\varepsilon>0$ exists, such that the open set $$V_{\varepsilon}(\bar{p})=\{(\vu,\vv)\in\R^n_{\pm}(\bar{\vu})\times\R^n:\|\vu-\bar{\vu}\|_{\infty}<\varepsilon,\, \vv_i>\frac{1}{\varepsilon},\, i=1,\dots,n\}$$ is a subset of $ D_n^+$, and does not contain any discontinuity point of $\beta_{f}$.
\end{prop}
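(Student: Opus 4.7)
I would model the proof on that of Proposition~\ref{Weps}, the main new difficulty being that the $v$-coordinates of a hypothetical accumulating sequence of discontinuities now escape to infinity. First, one easily finds an $\varepsilon_0>0$ small enough that $V_{\varepsilon_0}(\bar p)\subseteq D_n^+$: the requirement $u\prec v$ on elements of $V_{\varepsilon_0}(\bar p)$ translates into $\bar u_i+\varepsilon_0<1/\varepsilon_0$ for every $i$, which is satisfied provided $\varepsilon_0$ is small. The strategy for the second assertion is then a proof by contradiction: suppose that for every $j\geq N$, where $N$ is a fixed integer with $1/N<\varepsilon_0$, the set $V_{1/j}(\bar p)$ contains a discontinuity point $p^j$ of $\beta_f$. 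I would apply Lemma~\ref{LemmaDisc} to obtain a nearby point $q^j=(u^j,v^j)\in V_{1/j}(\bar p)$ for which either (A) $u^j$ is a discontinuity of $\beta_f(\cdot,v^j)$, or (B) $v^j$ is a discontinuity of $\beta_f(u^j,\cdot)$, and pass to a subsequence so that one of these alternatives holds for every $j$.

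In case (A), I would closely mimic the proof of Proposition~\ref{Weps}. Fix once and for all $v^*\in\R^n$ with coordinates $v^*_i=\bar u_i+1$, so that $u^j\prec v^*\prec v^j$ for all $j\geq N$ (using $\|u^j-\bar u\|_\infty<1/N$ on the left and $v^j_i>j\geq N$ on the right, for $N$ large enough). By Corollary~\ref{MultiPropagation}$(i)$, each $u^j$ is also a discontinuity of $\beta_f(\cdot,v^*)$. Extracting a subsequence of the $u^j$ monotone in each coordinate and bounded above by $\bar u_{+1/N}$, the non-decreasing integer function $\beta_f(\cdot,v^*)$ must accumulate infinitely many unit jumps below $\bar u_{+1/N}$, yielding $\beta_f(\bar u_{+1/N},v^*)=+\infty$ and contradicting Proposition~\ref{RemFiniteness}.

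Case (B) is the step I expect to be the main obstacle, since it has no analogue in Proposition~\ref{Weps}: the sequence $v^j$ satisfies $v^j_i>j\to\infty$, so we cannot bound it uniformly from above by a fixed point. My plan is to apply Corollary~\ref{MultiPropagation}$(ii)$ with $u=\bar u_{+1/N}$, which is legitimate because $u^j\preceq\bar u_{+1/N}\prec v^j$ for $N$ large; this shows that each $v^j$ is a discontinuity of the non-increasing, non-negative integer function $\phi(v)=\beta_f(\bar u_{+1/N},v)$. I would then extract a further subsequence so that each coordinate $v^j_i$ is \emph{strictly} increasing in $j$ (possible since every coordinate is unbounded), and combine the right-continuity of $\beta_f$ in $v$ (Proposition~\ref{Right}) with Lemma~\ref{lemma0}$(ii)$ and monotonicity (Proposition~\ref{Monotonicity}) to produce, arbitrarily close below each $v^j$, a point $v^{j,-}\prec v^j$ with $\phi(v^{j,-})\geq\phi(v^j)+1$. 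Taking $v^{j,-}$ close enough to $v^j$ guarantees $v^{j-1}\prec v^{j,-}$ thanks to the strict coordinate-wise monotonicity, whence monotonicity of $\phi$ gives $\phi(v^{j-1})\geq\phi(v^j)+1$. Iterating yields $\phi(v^N)\geq\phi(v^{N+k})+k\geq k$ for every $k\in\N$, and since $v^*\prec v^N$ for the same $v^*$ used in case (A), monotonicity once more gives $\beta_f(\bar u_{+1/N},v^*)=\phi(v^*)=+\infty$, again contradicting Proposition~\ref{RemFiniteness}.
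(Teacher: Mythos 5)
Your proof is correct, but it diverges from the paper's at the point you yourself flag as the main obstacle. The paper disposes of your case (B) without any argument about accumulating discontinuities in the $v$-direction: it simply takes $N\geq \max_{x\in X}\|f(x)\|_{\infty}$ (using compactness of $X$), so that every $v^{_j}\succ j\succeq (N,\dots,N)$ satisfies $X\langle f\preceq v\rangle=X$ for all $v$ near $v^{_j}$; hence $\beta_f(u^{_j},\cdot)$ is locally constant at $v^{_j}$, case (B) is vacuous, and only your case (A) (the $W_\varepsilon$-style argument, identical in both proofs up to the choice $\bar v=(N,\dots,N)$ versus $v^*=\bar u+\vec 1$) remains. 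Your staircase argument for case (B) does go through: Corollary~\ref{MultiPropagation}$(ii)$ applies since $u^{_j}\preceq \bar u_{+1/N}\prec v^{_j}$ for $N$ large; right-continuity (Proposition~\ref{Right}) plus integer-valuedness forces the point supplied by Lemma~\ref{lemma0}$(ii)$ to lie in the lower cone of $v^{_j}$, giving $\phi(v^{j,-})\geq\phi(v^{_j})+1$; and the strictly increasing subsequence $v^{_j}$ then yields $\phi(v^{_N})\geq k$ for all $k$, already contradicting Proposition~\ref{RemFiniteness} (your last step through $v^*$ is superfluous). The trade-off: the paper's route is shorter and leans on boundedness of $f$, while yours uses only the formal properties of PBNs (finiteness, monotonicity, right-continuity) and so would survive even without a uniform bound on $f$, at the price of treating a case that is actually empty. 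One small point of care, present at the same level of terseness in the paper's own proof of Proposition~\ref{Weps}: ``monotone in each coordinate'' should be a $\preceq$-monotone (in fact strictly $\prec$-increasing or $\succ$-decreasing) subsequence of the $u^{_j}$, which does exist because $u^{_j}\in\R^n_{\pm}(\bar u)$ and $u^{_j}\to\bar u$; coordinate-wise monotonicity with mixed directions would not by itself give the accumulation of unit jumps.
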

\begin{proof}
First of all, let us observe that $V_{\varepsilon}(\bar{p})\subseteq D_n^+$ for $\varepsilon$ sufficiently close to 0. Next, we fix $N\in\N$ such that $1/N<\varepsilon$, and suppose, contrary to our assertion, that for every $j\leq N$ a discontinuity point $p^{_j}$ of $\beta_{f}$ exists in $V_{1/j}(\bar{p})$. We want to prove that such an assumption leads to contradict the finiteness of $\beta_{f}$ (cf. Proposition~\ref{RemFiniteness}).

By the previous Lemma~\ref{LemmaDisc} we know that arbitrarily close to each $p^{_j}$ there is a point $q^{_j}=(u^{_j},v^{_j})$ such that either $u^{_j}$ is a discontinuity point of $\beta_{f}(\cdot,\vv^{_j})$, or $\vv^{_j}$ is a discontinuity point of $\beta_{f}(\vu^{_j},\cdot)$. It is not restrictive to assume $N$ sufficiently large such that $\max_{x\in X}\|f(x)\|_{\infty}\leq N$. 
Since $q^{_j}$ can be taken arbitrarily close to $p^{_j}$, we can assume that  $q^{_j}\in V_{1/j}(\bar p)$, implying that $v^{_j}\succ j$. Hence, $X\langle f\preceq\vv^{_j}\rangle=X$ for every $j\geq N$. Therefore, $\vv^{_j}$ is not a discontinuity point of $\beta_{f}(\vu^{_j},\cdot)$, and
hence $\vu^{_j}$ is a discontinuity point of $\beta_{f}(\cdot,\vv^{_j})$. Thus, by Corollary~\ref{MultiPropagation}$(i)$ we have that, for every $j\geq N$, $\vu^{_j}$ is a discontinuity point of $\beta_{f}(\cdot,\vv)$, with $\vu^{_j}\prec \vv\preceq\vv^{_j}$. Before going on note that, possibly by considering a subsequence of $(q^{_j})_{j\ge N}$, we can assume that either $\vu^{_j}\preceq\vu^{_{j+1}}$ for every $j\geq N$, or $\vu^{_{j+1}}\preceq\vu^{_{j}}$ for every $j\geq N$.

We now set $\bar{\vu}_{+1/N}=(\bar{\vu}_1+1/N,\dots,\bar{u}_n+1/N)$, $\bar{v}=(N,\dots,N)$, and consider the function $\beta_{f}(\cdot,\bar{v})$. According to the previous considerations, such a function should have an infinite number of integer jumps. Indeed, for every $j\geq N$ we have $\bar v\prec v^{_j}$ and hence $\vu^{_j}$ is a discontinuity point of
$\beta_{f}(\cdot,\bar v)$. Moreover, we have also that $u^{_j}\preceq\bar{\vu}_{+1/N}$ for every $j\geq N$. Therefore, since $\beta_{f}(\vu,\vv)$ is non-decreasing in the variable $u$ (cf. Proposition~\ref{Monotonicity}), and recalling that either $\vu^{_j}\preceq\vu^{_{j+1}}$ for every $j\geq N$, or $\vu^{_{j+1}}\preceq\vu^{_{j}}$ for every $j\geq N$, it should be
that $\beta_{f}(\bar{\vu}_{+1/N},\bar v)=+\infty$,
thus contradicting the finiteness of $\beta_{f}$ (cf. Proposition~\ref{RemFiniteness}).
\end{proof}

\subsection{Persistence Space and Representation Theorem}
In this section we introduce persistence spaces of vector-valued functions, in analogy to persistence diagrams of scalar-valued functions. To do this, we first generalize the notion of multiplicity of a point to the multidimensional setting. Then we introduce cornerpoints as points with a non-zero multiplicity, and persistence spaces as multisets of cornerpoints. Finally, we show that the PBNs of a continuous function can be reconstructed from multiplicities of points. 

We begin with defining the multiplicity of a proper point and the notion of a proper cornerpoint. For every $(\vu,\vv)\in D_n^+$ and $\vec e\in\R^n$ with $\vec e\succ 0$ and $u+\vec e\prec v-\vec e$, we consider the number  
{\setlength\arraycolsep{2pt}
\begin{equation}\label{MMFormula1}
\begin{array}{cccc}
\mu_{f,\vec e\,}(\vu,\vv)=\beta_f(\vu+\vec e ,\vv-\vec e)&-&\beta_f(\vu-\vec e ,\vv-\vec e)+&\\
&-&\beta_f(\vu+\vec e,\vv+\vec e)+&\beta_f(\vu-\vec e ,\vv+\vec e).
\end{array}
\end{equation}
The computation of $\mu_{f,\vec e\,}(\vu,\vv)$ is illustrated in Figure~\ref{fig:mu_shape}.

\begin{figure}
\psfrag{u}{$u$}\psfrag{u+}{$u+\vec e$}\psfrag{v}{$v$}\psfrag{v+}{$v+\vec e$}\psfrag{u-}{$u-\vec e$}\psfrag{v-}{$v-\vec e$}\psfrag{e}{$\vec e$}
\includegraphics[width=0.5\textwidth]{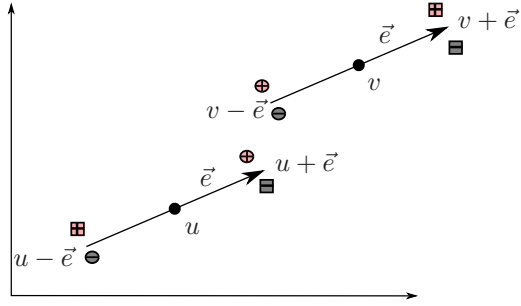}
\caption{The computation of $\mu_{f,\vec e\,}(\vu,\vv)$ involves the algebraic sum of the values that $\beta_f$ takes at the four points  $(\vu+\vec e ,\vv-\vec e)$, $(\vu-\vec e ,\vv-\vec e)$, $(\vu+\vec e,\vv+\vec e)$, $(\vu-\vec e ,\vv+\vec e)$. In this picture the pairs of coordinates of each point  are shape- and color-coded, and the plus sign in the sum is represented by $\oplus$ or $\boxplus$, the minus sign by  $\ominus$ or $\boxminus$.}
\label{fig:mu_shape}
\end{figure}

PBNs being  integer-valued functions (Proposition~\ref{RemFiniteness}), we have that $\mu_{f,\vec e\,}(\vu,\vv)$ is an integer number, and by Lemma~\ref{Jump} it is non-negative. Once again by Lemma~\ref{Jump}, if $ 0\prec\vec e\preceq\vec\eta$, then
\begin{eqnarray*}
\quad\quad\begin{array}{ccc} 
\beta_{f}(\vu+\vec\eta,\vv-\vec\eta) - \beta_{f}(\vu-\vec\eta,\vv-\vec\eta) & \geq\beta_{f}(\vu+\vec\eta,\vv-\vec e\,) - \beta_{f}(\vu-\vec\eta,\vv-\vec e\,),\\
\beta_{f}(\vu+\vec\eta,\vv+\vec\eta) - \beta_{f}(\vu-\vec\eta,\vv+\vec\eta) & \leq\beta_{f}(\vu+\vec\eta,\vv+\vec e\,) - \beta_{f}(\vu-\vec\eta,\vv+\vec e\,),
\end{array}\\
\quad\quad\begin{array}{ccc} 
\beta_{f}(\vu+\vec\eta,\vv-\vec e\,) - \beta_{f}(\vu+\vec\eta,\vv+\vec e\,) & \geq\beta_{f}(\vu+\vec e,\vv-\vec e\,) - \beta_{f}(\vu+\vec e,\vv+\vec e\,),\\
\beta_{f}(\vu-\vec\eta,\vv-\vec e\,) - \beta_{f}(\vu-\vec\eta,\vv+\vec e\,) & \leq\beta_{f}(\vu-\vec e,\vv-\vec e\,) - \beta_{f}(\vu-\vec e,\vv+\vec e\,).
\end{array}
\end{eqnarray*}
These inequalities easily imply that the sum defining $\mu_{f,\vec e\,} (\vu,\vv)$ is non-decreasing in $\vec e$. Moreover, by Proposition~\ref{Weps}, each term in that sum is constant on the set of those $\vec e\in\R^n$ for which $\vec e\succ 0$ and $\left\|\vec e\right\|_\infty$ is sufficiently close to 0. These remarks justify the following definition. 

\begin{defi}\label{multiplicityp}
For every $p=(\vu,\vv)\in D_n^+$, the {\em multiplicity } $\mu_{f}(p)$ is the finite, non-negative integer number defined by setting
{\setlength\arraycolsep{2pt}
$$
\mu_{f}(p)=\min_{\newatop{\vec e\succ 0}{\vu+\vec e\prec \vv-\vec e}}\mu_{f,\vec e\,}(\vu,\vv)
$$
with $\vec e\in\R^n$. When  $\mu_{f} (p)$ is strictly positive, $p$ is said to be a {\em proper cornerpoint} of $\beta_{f}$.}
\end{defi}

In plain words, a proper cornerpoint captures a topological feature with bounded persistence. Persistence can be defined as follows.
\begin{defi}\label{persistence}
The {\em persistence} of a proper cornerpoint $p=(u,v)$ is given by
$$\mathrm{pers}(p)=\min_{i=1,\ldots,n}v_i-u_i.$$
\end{defi}  
The motivation behind this definition of persistence is that $\min_{i=1,\ldots,n}v_i-u_i$ is directly proportional to the distance of $(u,v)$ to $D_n$, as explained at the beginning of Section~\ref{stability}. Therefore, it gives a measure of the amount of perturbation needed to cancel a proper cornerpoint. Interestingly, $\min_{i=1,\ldots,n}v_i-u_i$ is also equal to the one-dimensional persistence in the filtration along the line passing through $u$ and $v$, as treated in \cite[Theorem~4.2]{CeDi*12}.

We now similarly define the multiplicity of a point at infinity and the notion of cornerpoint at infinity. For every $(\vu,\vv)\in D_n^+$ and $\vec e\in\R^n$ with $\vec e\succ 0$ and $u+\vec e\prec v$, we consider the number  
\begin{equation}\label{MMFormula2}
\begin{array}{cccc}
\mu_{f,\vec e\,}^{_{\infty}}(\vu,\vv)=\beta_{f}(\vu+\vec e,\vv)&-& \beta_{f}(\vu-\vec e,\vv).
\end{array}
\end{equation}
The computation of $\mu_{f,\vec e\,}^{_{\infty}}(\vu,\vv)$ is illustrated in Figure~\ref{fig:mu_infty}.

\begin{figure}
\psfrag{u}{$u$}\psfrag{u+}{$u+\vec e$}\psfrag{v}{$v$}\psfrag{u-}{$u-\vec e$}\psfrag{e}{$\vec e$}
\includegraphics[width=0.3\textwidth]{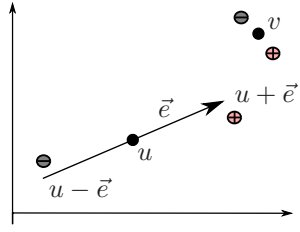}
\caption{The computation of $\mu_{f,\vec e\,}^{_{\infty}}(\vu,\vv)$ involves the algebraic sum of the values that $\beta_f$ takes at the points  $(\vu+\vec e ,\vv)$, $(\vu-\vec e ,\vv)$. In this picture the pairs of coordinates of each point  are color-coded, and the sign in the sum is represented by $\oplus$ or $\ominus$.}
\label{fig:mu_infty}
\end{figure}

By Proposition~\ref{RemFiniteness}, $\mu_{f,\vec e\,}^{_{\infty}}(\vu,\vv)$ is an integer number, and by Proposition~\ref{Monotonicity} we know that it is non-negative. Lemma~\ref{Jump} easily implies that it is non-decreasing in $\vec e$ and non-increasing in $\vec v$. Moreover, by Proposition~\ref{Veps}, each term of the sum defining $\mu_{f,\vec e\,}^{_{\infty}}(\vu,\vv)$ is constant on the set of those $\vec e,v \in\R^n$ for which $\vec e\succ 0$, $\left\|\vec e\right\|_\infty$ is sufficiently close to 0, and $v_i>1/\|\vec e\|_\infty$, for $i=1,\dots,n$. These remarks justify the following definition.}

\begin{defi}\label{multiplicityr}
For every $p=(u,\infty)\in D_n^*$, the {\em multiplicity } $\mu_{f}(p)$ is the finite, non-negative integer number defined by setting

$$
\mu_{f}(p)=\min_{\newatop{\vec e\succ 0}{v: u+\vec e\prec v}} \mu_{f,\vec e\,}^{_{\infty}}(\vu,\vv).
$$
When $\mu_{f}(p)$ is strictly positive,  $p$ is said to be a {\em cornerpoint at infinity} of $\beta_{f}$.
\end{defi}

\begin{rem}\label{multiplicityrRem0}
For any $f:X\to\R^n$, $\beta_f$ admits at least a cornerpoint at infinity in the homology degree $k$ if and only if $\check{H}_k(X)\neq 0$.
\end{rem}

Basically, a cornerpoint at infinity captures a topological feature that is going to live forever, i.e. it is an essential topological feature of $X$. In other words, cornerpoints at infinity correspond to features with unbounded persistence.

\begin{rem}\label{multiplicityrRem1}
For $n=1$, Definitions~\ref{multiplicityp} and \ref{multiplicityr} coincide with the  definitions of multiplicity of proper points and points at infinity, respectively,  used to define persistence diagrams.
\end{rem}

Having extended the notion of multiplicity to the multidimensional setting, the definition of a persistence space is now completely analogous to the one of a persistence diagram for real-valued functions.

\begin{defi}[Persistence Space]\label{persSpace}
The persistence space $\spc(f)$ is the multiset of all points $p\in D^*_n$ such that $\mu_f(p)>0$, counted with their multiplicity, union the points of $D_n$, counted with infinite multiplicity.
\end{defi}

Persistence spaces can be reasonably thought as the analogue, in the case of a vector-valued function, of persistence diagrams. Indeed, similarly to the one-dimensional case, a persistence space is completely and uniquely determined by the corresponding persistent Betti numbers. Moreover, even in the multi-parameter situation the converse is true as well, since it is possible to prove the following Multidimensional Representation Theorem. In what follows, $\langle\vec e\,\rangle$ denotes the line in $\R^n$ spanned by $\vec e$.

\begin{thm}[Multidimensional Representation Theorem]\label{representation}
For every $(\bar{\vu},\bar{\vv})\in D_n^+$ and every $\vec e\succ 0$, it holds that
\begin{equation}\label{mainFormula}
\beta_{f}(\bar{\vu},\bar{\vv})=\sum_{\newatop{\vu\preceq\bar{\vu},\,v\succ\bar v}{\bar{\vu}-\vu,\vv-\bar{\vv}\in
\langle\vec e\,\rangle}}
\mu_{f} (\vu,\vv)+\sum_{\newatop{\vu\preceq\bar{\vu}}{\bar{\vu}-\vu\in
\langle\vec e\,\rangle}}\mu_{f}(\vu,\infty).
\end{equation}
\end{thm}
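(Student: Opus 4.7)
The plan is to reduce the multidimensional formula to a one-parameter statement along the pair of parallel lines through $\bar u$ and $\bar v$ in direction $\vec e$. Fix $\vec e\succ 0$, set $\delta=\min_i(\bar v_i-\bar u_i)/e_i>0$, and define
\[
\phi(\sigma,\tau)=\beta_f(\bar u+\sigma\vec e,\bar v+\tau\vec e)\qquad\text{for }\sigma-\tau<\delta.
\]
By Propositions~\ref{RemFiniteness}, \ref{Monotonicity}, and~\ref{Right}, $\phi$ is integer-valued and finite, non-decreasing in $\sigma$ and non-increasing in $\tau$, and right-continuous in each variable. Since $X$ is compact, $\phi(\sigma,\tau)=0$ for $\sigma$ sufficiently negative and $\phi$ is eventually constant in $\tau$, so $\phi$ satisfies the same boundary behaviour as a one-parameter PBN.

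Next I would match the multiplicities. Substituting $\vec\eta=\epsilon\vec e$ into \eqref{MMFormula1} realises $\mu_{f,\epsilon\vec e}(\bar u+\sigma_0\vec e,\bar v+\tau_0\vec e)$ as exactly the one-dimensional $\epsilon$-multiplicity of $\phi$ at $(\sigma_0,\tau_0)$. The discussion preceding Definition~\ref{multiplicityp} shows that $\mu_{f,\vec\eta}$ is constant, and equal to $\mu_f$, once $\|\vec\eta\|_\infty$ is sufficiently small; applied along the one-parameter family $\vec\eta=\epsilon\vec e$, this yields $\mu_\phi(\sigma_0,\tau_0)=\mu_f(\bar u+\sigma_0\vec e,\bar v+\tau_0\vec e)$. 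The analogous computation using \eqref{MMFormula2} and Proposition~\ref{Veps} identifies the at-infinity multiplicities.

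I would then apply the one-dimensional Representation Theorem \cite[Thm.~3.11]{CeDi*12} to $\phi$ at the point $(0,\alpha)$ for small $\alpha>0$ and let $\alpha\to 0^+$. By right-continuity of $\beta_f$ in $v$, the left-hand side $\phi(0,\alpha)=\beta_f(\bar u,\bar v+\alpha\vec e)$ tends to $\beta_f(\bar u,\bar v)$. On the right-hand side, the at-infinity sum is independent of $\alpha$, while the proper sum consists of non-negative integer terms indexed by a set that grows monotonically to $\{(\sigma_0,\tau_0):\sigma_0\leq 0,\ \tau_0>0\}$ as $\alpha\to 0^+$, so monotone convergence applies. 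Substituting $u=\bar u+\sigma_0\vec e$ and $v=\bar v+\tau_0\vec e$ then delivers \eqref{mainFormula}.

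The main obstacle is legitimising the invocation of the one-dimensional theorem for $\phi$, which is \emph{not} itself the PBN of any scalar function on $X$ whenever $\bar v-\bar u\notin\langle\vec e\,\rangle$ (the two lines $\{\bar u+\sigma\vec e\}$ and $\{\bar v+\tau\vec e\}$ are then distinct, so no single sublevel filtration realises $\phi$). The resolution is that the proof of \cite[Thm.~3.11]{CeDi*12} uses only the axioms verified above together with the multiplicity formula, and so transfers verbatim to $\phi$; local finiteness of the summands is guaranteed by Propositions~\ref{Weps} and~\ref{Veps} applied along the lines in direction $\vec e$.
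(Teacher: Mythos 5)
Your argument is correct, but it reaches the formula by a different route than the paper. The paper never invokes the one-dimensional Representation Theorem as a black box: after stabilizing the multiplicities as $\mu_{f,\varepsilon\vec e\,}$ and $\mu^{_{\infty}}_{f,\varepsilon\vec e\,}$ (its equations (\ref{partialMultiplicity}) and (\ref{partialMultiplicityLine}), which is exactly your identification step), it re-runs the one-dimensional combinatorics explicitly along the two rays: Corollary~\ref{MultiPropagation} together with finiteness and monotonicity yields finitely many jump points $u^{_1}\prec\dots\prec u^{_p}$ of $\beta_f(\cdot,\bar v)$ and $v^{_1}\prec\dots\prec v^{_q}$ of $\beta_f(\bar u,\cdot)$ on those rays, the $\varepsilon$-multiplicities vanish off these points, the double sum telescopes to $\beta_{f}(\vu^{_p}+\varepsilon\vec e,\vv^{_1}-\varepsilon\vec e\,)$, and right-continuity (Proposition~\ref{Right}) gives $\beta_f(\bar u,\bar v)$. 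You instead transfer \cite[Thm.~3.11]{CeDi*12} to the auxiliary function $\phi(\sigma,\tau)=\beta_f(\bar u+\sigma\vec e,\bar v+\tau\vec e)$, correctly spotting the obstacle (for $\bar v-\bar u\notin\langle\vec e\,\rangle$, $\phi$ is not the PBN of any scalar function) and the correct fix: the one-dimensional proof is purely axiomatic in finiteness, monotonicity, right-continuity, jump monotonicity, and local constancy, all of which $\phi$ inherits from Lemma~\ref{Jump} and Propositions~\ref{Weps} and~\ref{Veps} restricted to the direction $\vec e$ (note $\vec e\succ 0$ is what places the restricted points inside $\R^n_{\pm}(\cdot)\times\R^n_{\pm}(\cdot)$). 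Your version buys brevity; its cost is that the burden shifts to certifying that the cited proof uses nothing beyond those axioms, which is true but is precisely what the paper discharges by writing the cancellation argument out in full. Two minor points: the detour through $(0,\alpha)$ with $\alpha\to 0^{+}$ is unnecessary, since $(\bar u,\bar v)$ already lies in the domain of $\phi$ and the strict inequality $v\succ\bar v$ is built into the statement being transferred; and in matching the at-infinity multiplicities you should say explicitly that the minimum over all $v$ with $u+\vec e\prec v$ in Definition~\ref{multiplicityr} is attained along the ray $\bar v+\tau\vec e$, which holds by Proposition~\ref{Veps} because every coordinate of $\bar v+\tau\vec e$ tends to infinity with $\tau$.
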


\begin{proof}
We have seen that, for every $(u,v)\in D_n^+$, a positive real number  $\varepsilon=\varepsilon(u,v)$ sufficiently small exists, for which
\begin{equation}\label{partialMultiplicity}
\mu_f(u,v)=\mu_{f,\varepsilon\vec e\,}(u,v). 
\end{equation}
As for points at infinity, for every $(u,\infty)\in D^*_n$, we can choose a positive real number $\varepsilon=\varepsilon(u)$ sufficiently small such that, setting $v^{_{\varepsilon}}=(\varepsilon^{_{-1}},\dots,\varepsilon^{_{-1}})$,  we have  
\begin{equation}\label{partialMultiplicityLine}
\mu_f(u,\infty)=\mu_{f,\varepsilon\vec e\,}^{_{\infty}}(u,v^{_{\varepsilon}}).
\end{equation}
Thus, by (\ref{partialMultiplicity}) and (\ref{partialMultiplicityLine}), we  get
\begin{eqnarray*}
\sum_{\newatop{\vu\preceq\bar{\vu},\,v\succ\bar v}{\bar{\vu}-\vu,\vv-\bar{\vv}\in
\langle\vec e\,\rangle}}
\mu_{f} (\vu,\vv) & = &\sum_{\newatop{\vu\preceq\bar{\vu},\,v\succ\bar v}{\bar{\vu}-\vu,\vv-\bar{\vv}\in
\langle\vec e\,\rangle}}\mu_{f,\varepsilon\vec e\,}(\vu,\vv),\label{sumMultiplicity}\\
\sum_{\newatop{\vu\preceq\bar{\vu}}{\bar{\vu}-\vu\in
\langle\vec e\,\rangle}}
\mu_{f}(\vu,\infty) & = &\sum_{\newatop{\vu\preceq\bar{\vu}}{\bar{\vu}-\vu\in
\langle\vec e\,\rangle}}\mu_{f,\varepsilon\vec e\,}^{_{\infty}}(u,v^{_{\varepsilon}}).\label{sumMultiplicity2}
\end{eqnarray*}
Now, by the finiteness and the monotonicity of PBNs, at most finitely many discontinuity points of $\beta_f(\cdot,\bar v)$ exist, say $u^{_{1}},\dots,u^{_{p}}\in\R^n$, such that $u^{_{i}}\preceq\bar u$ and $\bar u-u^{_{i}}\in\langle\vec e\,\rangle$ for all $i=1,\dots,p$. Without loss of generality, we can assume that $u^{_1}\prec\dots\prec u^{_p}$. Analogously, let $v^{{_1}}\prec\dots\prec v^{_{q}}$ be the discontinuity points of $\beta_f(\bar u,\cdot)$ for which $v^{_{j}}\succ\bar v$ and $v^{_{j}}-\bar v\in\langle\vec e\,\rangle$ for all $j=1,\dots,q$. In conclusion, we have $u^{_1}\prec\dots\prec u^{_p}\preceq\bar u\prec\bar v\prec v^{_1}\prec\dots\prec v^{_q}$.

Note that, for every $v\succ\bar v$ with $v-\bar v\in\langle\vec e\,\rangle$, the restriction of $\beta_f(\cdot,v)$ to the set $U=\{u\in\R^n\,|\,u\preceq\bar u,\ \bar u-u\in\langle\vec e\,\rangle\}$ is continuous for all $u\neq u^{_{i}}$. Indeed, suppose to the contrary that, for a given $\hat v\succ\bar v$ with  
$\hat v-\bar v\in\langle\vec e\,\rangle$, the function $\beta_f(\cdot,\hat v)$
restricted to the above set is discontinuous at a point $\hat u\ne u^i$. Then, by Corollary~\ref{MultiPropagation} we would have that $\beta_f(\cdot,v)$ is discontinuous at $\hat u$ for every $v$ with $\hat u\prec v\preceq\hat v$. In particular, $\hat u$ would be a discontinuity point of $\beta_f(\cdot,\bar v)$, against the assumption that $u^{_{1}},\dots,u^{_{p}}$ are the only discontinuity points.

In much the same way we can show that, for any $u\preceq\bar u$ with $\bar u-u\in\langle\vec e\,\rangle$, the restriction of $\beta_f(u,\cdot)$ to $V=\{v\in\R^n\,|\,v\succ\bar v,\ v-\bar v\in\langle\vec e\,\rangle\}$ is continuous for all $v\neq v^{_{j}}$.  

These remarks imply that, for $u\in U\setminus\{u^{_{1}},\dots,u^{_{p}}\}$, $v\in V\setminus \{v^{_{1}},\dots,v^{_{q}}\}$, and for any real $\varepsilon>0$ sufficiently small, $\mu_{f,\varepsilon\vec e\,}(u,v)=0$. Indeed, $\beta_f$ is integer-valued so that, by recalling equality (\ref{MMFormula1}), the sum defining $\mu_{f,\varepsilon\vec e\,}(u,v)$ for any such $u$ and $v$ is over constant terms. Similarly, by equality (\ref{MMFormula2}) it holds that $\mu_{f,\varepsilon\vec e\,}^{_{\infty}}(u,v^{_{\varepsilon}})=0$. Therefore we have 
{\setlength\arraycolsep{2pt}
\begin{eqnarray*}\label{mainFormulaTer}
\sum_{\newatop{\vu\preceq\bar{\vu},\,v\succ\bar v}{\bar{\vu}-\vu,\vv-\bar{\vv}\in
\langle\vec e\,\rangle}}\mu_{f,\varepsilon\vec e\,}(\vu,\vv)&=&\sum_{\newatop{i=1,\dots,p}{j=1,\dots,q}}\mu_{f,\varepsilon\vec e\,}(u^{_{i}},v^{_{j}}),\\
\sum_{\newatop{\vu\preceq\bar{\vu}}{\bar{\vu}-\vu\in
\langle\vec e\,\rangle}}\mu_{f,\varepsilon\vec e\,}^{_{\infty}}(u,v^{_{\varepsilon}})&=&\sum_{i=1,\dots,p}\mu_{f,\varepsilon\vec e\,}^{_{\infty}}(u^{_{i}},v^{_{\varepsilon}}).
\end{eqnarray*}
Before going on, note that the following facts hold for every $u\in U$ and $v\in V$:
\begin{description}
\item[(F1)] If $u^{_{i}}\preceq u\prec u^{_{i+1}}$ and $v^{_{j}}\preceq v\prec v^{_{j+1}}$, then $\beta_f(u,v)=\beta_f(u^{_i},v^{_j})$. Indeed, $\beta_f$ is integer-valued and right-continuous (Propositions~\ref{RemFiniteness} and~\ref{Right}). Moreover, $\beta_f(\cdot,v)$ and $\beta_f(u,\cdot)$ are continuous in $U\setminus\{u^{_{1}},\dots,u^{_{p}}\}$ and $V\setminus\{v^{_{1}},\dots,v^{_{q}}\}$, respectively; 
\item[(F2)] If $u\prec u^{_{1}}$ then $\beta_f(u,v)=0$;
\item[(F3)] If $v\succeq v^{_q}$ then $\beta_f(u,v)=\beta_f(u,v^{_q})$.
\end{description}

Thus, by applying (F1) and (F2) we get 
\begin{eqnarray*}
\sum_{\newatop{i=1,\dots,p}{j=1,\dots,q}}\mu_{f,\varepsilon\vec e\,}(u^{_{i}},v^{_{j}}\,)&=&\beta_{f}(\vu^{_p}+\varepsilon\vec e,\vv^{_1}-\varepsilon\vec e\,)-\beta_{f}(\vu^{_1}-\varepsilon\vec e,\vv^{_1}-\varepsilon\vec e\,)\\
&+&\beta_{f}(\vu^{_1}-\varepsilon\vec e,\vv^{_q}+\varepsilon\vec e\,)-\beta_{f}(\vu^{_p}+\varepsilon\vec e,\vv^{_q}+\varepsilon\vec e\,)=\\
&=&\beta_{f}(\vu^{_p}+\varepsilon\vec e,\vv^{_1}-\varepsilon\vec e\,)-\beta_{f}(\vu^{_p}+\varepsilon\vec e,\vv^{_q}+\varepsilon\vec e\,),
\end{eqnarray*}
$\varepsilon$ being a sufficiently small positive real number. Analogously, by applying (F1), (F2) and (F3) we get 
\begin{eqnarray*}
\sum_{i=1,\dots,p}\mu_{f,\varepsilon\vec e\,}^{_{\infty}}(u^{_{i}},v^{_{\varepsilon}})&=&\beta_{f}(\vu^{_p}+\varepsilon\vec e,v^{_{\varepsilon}})-\beta_{f}(\vu^{_1}-\varepsilon\vec e,v^{_{\varepsilon}})=\\
&=&\beta_{f}(\vu^{_p}+\varepsilon\vec e,\vv^{_{\varepsilon}})=\beta_{f}(\vu^{_p}+\varepsilon\vec e,\vv^{_q}+\varepsilon\vec e\,).
\end{eqnarray*}
Indeed, all  other terms cancel out, as shown in Figure~\ref{representationFigure}.} 
\begin{figure}
\psfrag{u}{$\bar u$}\psfrag{u1}{$u^{_1}$}\psfrag{u2}{$u^{_2}$}\psfrag{us}{$u^{_p}$}\psfrag{v}{$\bar v$}\psfrag{v1}{$v^{_1}$}\psfrag{v2}{$v^{_2}$}\psfrag{vt}{$v^{_q}$}\psfrag{u+e}{$\bar u+\vec e$}\psfrag{v+e}{$\bar v+\vec e$}
\includegraphics[width=0.75\textwidth]{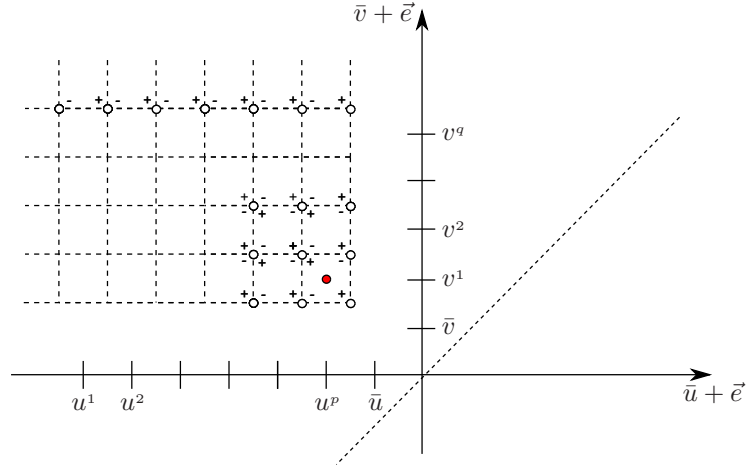}\caption{The idea behind the cancellation process used in the proof of Theorem~\ref{representation}.}\label{representationFigure}
\end{figure}
So we can write
\begin{equation*}
\sum_{\newatop{i=1,\dots,p}{j=1,\dots,q}}\mu_{f,\varepsilon\vec e\,}(u^{_{i}},v^{_{j}})
+\sum_{i=1,\dots,p}\mu_{f,\varepsilon\vec e\,}^{_{\infty}}(u^{_{i}},v^{_{\varepsilon}})=\beta_{f}(\vu^{_p}+\varepsilon\vec e,\vv^{_1}-\varepsilon\vec e\,).
\end{equation*} 

It is not restrictive to assume $\varepsilon$ sufficiently small such that, by the right continuity of $\beta_f(u,v)$ in $u$ and $v$, $\beta_f(u^{_{p}}+\varepsilon\vec e,v^{_{1}}-\varepsilon\vec e)=\beta_f(\bar u,\bar v)$ thus getting the claim.

\end{proof}

We end this section with two results, showing that discontinuity points of $\beta_f$ propagate from cornerpoints, both proper and at infinity. For what follows, it is convenient to observe that equality (\ref{mainFormula}) can be reformulated as 
\begin{equation}\label{mainFormula2}
\beta_f(\bar u,\bar v)=\sum_{s\geq 0,t>0}\mu_{f}(\bar u-s\vec e,\bar\vv+t\vec e\,)+\sum_{s\geq 0}\mu_{f}(\bar u-s\vec e,\infty).
\end{equation}

\begin{prop}\label{cornerp=>disc}
If $(\bar\vu,\bar \vv)\in D_n^+$ is a proper cornerpoint of $\beta_{f}$, then the following statements hold:
\begin{enumerate}
\item[$(i)$] $\bar\vu$ is a discontinuity point of $\beta_{f}(\cdot,\hat\vv)$, for every $\hat\vv$ with $\bar\vu\prec\hat\vv\prec\bar{\vv}$; 
\item[$(ii)$] $\bar\vv$ is a discontinuity point of $\beta_{f}(\hat\vu,\cdot)$, for every $\hat\vu$ with $\bar{\vu}\preceq\hat \vu\prec \bar{\vv}$. 
\end{enumerate}
\end{prop}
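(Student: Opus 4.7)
The plan is to reduce both claims to the observation that $\mu_{f,\vec e}(\bar u,\bar v)>0$ forces two strict jumps of $\beta_f$ near $(\bar u,\bar v)$ -- one in the $u$-direction at level $\bar v-\vec e$ and one in the $v$-direction at position $\bar u+\vec e$ -- and then to transport these jumps to the prescribed targets via Multidimensional Jump Monotonicity (Lemma~\ref{Jump}). First I would invoke Definition~\ref{multiplicityp} together with Proposition~\ref{Weps} to fix $\varepsilon>0$ so that $W_\varepsilon(\bar p)$, with $\bar p=(\bar u,\bar v)$, contains no discontinuity of $\beta_f$ and so that $\mu_{f,\vec e}(\bar u,\bar v)=\mu_f(\bar u,\bar v)>0$ for every $\vec e\succ 0$ with $\|\vec e\|_\infty<\varepsilon$. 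Expanding the four-term sum~(\ref{MMFormula1}) and using the monotonicity of $\beta_f$ from Proposition~\ref{Monotonicity} to absorb, in two different ways, a positive term together with a negative term, I obtain the two strict inequalities
\begin{align*}
\beta_f(\bar u+\vec e,\bar v-\vec e)&>\beta_f(\bar u-\vec e,\bar v-\vec e),\\
\beta_f(\bar u+\vec e,\bar v-\vec e)&>\beta_f(\bar u+\vec e,\bar v+\vec e).
\end{align*}

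Claim (i) is then quick. Given $\hat v$ with $\bar u\prec\hat v\prec\bar v$, I would further shrink $\vec e$ so that $\bar u+\vec e\prec\hat v\preceq\bar v-\vec e$, and apply Lemma~\ref{Jump} with $u^{_1}=\bar u-\vec e$, $u^{_2}=\bar u+\vec e$, $v^{_1}=\hat v$, $v^{_2}=\bar v-\vec e$; this transports the first strict inequality to level $\hat v$, giving $\beta_f(\bar u+\vec e,\hat v)>\beta_f(\bar u-\vec e,\hat v)$, so that $\bar u$ is a discontinuity point of $\beta_f(\cdot,\hat v)$.

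For claim (ii), given $\hat u$ with $\bar u\preceq\hat u\prec\bar v$, the plan is to shrink $\vec e$ so that $\hat u\prec\bar v-\vec e$ and apply Lemma~\ref{Jump} with $u^{_1}=\bar u$, $u^{_2}=\hat u$, $v^{_1}=\bar v-\vec e$, $v^{_2}=\bar v+\vec e$, obtaining
\[
\beta_f(\hat u,\bar v-\vec e)-\beta_f(\hat u,\bar v+\vec e)\geq \beta_f(\bar u,\bar v-\vec e)-\beta_f(\bar u,\bar v+\vec e).
\]
The step I expect to be the main obstacle is evaluating the right-hand side, since when $\hat u=\bar u$ the point $\bar u$ lies on the boundary of the four open ``strict'' quadrants on which Proposition~\ref{Weps} grants $\beta_f$ to be locally constant, so the first paragraph does not immediately pin down $\beta_f(\bar u,\bar v\mp\vec e)$. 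I would handle this by appealing to the right-continuity of $\beta_f$ in $u$ (Proposition~\ref{Right}): together with integer-valuedness and the quadrant-constancy, it forces $\beta_f(\bar u,\bar v\mp\vec e)=\beta_f(\bar u+\vec e,\bar v\mp\vec e)$, collapsing the right-hand side to the second strict inequality of the first paragraph. This makes the right-hand side strictly positive, so $\bar v$ is a discontinuity of $\beta_f(\hat u,\cdot)$, completing (ii).
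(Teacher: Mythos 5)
Your argument is correct, and it differs from the paper's in how the strict jump near the cornerpoint is produced. The paper obtains $\beta_f(\bar u+\vec e,\bar v-\vec e)-\beta_f(\bar u-\vec e,\bar v-\vec e)>0$ by invoking the Multidimensional Representation Theorem~\ref{representation} (the reconstruction sum over the half-line contains the addend $\mu_f(\bar u,\bar v)>0$), and then transports the jump with Lemma~\ref{Jump}, declaring part $(ii)$ analogous; you instead extract both strict inequalities directly from $\mu_{f,\vec e\,}(\bar u,\bar v)\geq\mu_f(\bar u,\bar v)>0$ (note that Proposition~\ref{Weps} is not even needed here, since $\mu_f$ is defined as a minimum) together with Proposition~\ref{Monotonicity}, which makes part $(i)$ more elementary and independent of Theorem~\ref{representation}. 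The price appears exactly where you predicted, in part $(ii)$: the paper's route is insensitive to the case $\hat u=\bar u$ because the sum in (\ref{mainFormula2}) includes cornerpoints whose first coordinate equals the evaluation point (the $s=0$ terms), so $\beta_f(\bar u,\bar v-\vec e\,)-\beta_f(\bar u,\bar v+\vec e\,)\geq\mu_f(\bar u,\bar v)>0$ comes for free, whereas your elementary inequalities only give the $v$-jump at the shifted position $\bar u+\vec e$. Your repair -- right-continuity in $u$ (Proposition~\ref{Right}) plus integer-valuedness plus the local constancy of $\beta_f$ on the connected components of $W_{\varepsilon}(\bar p)$ guaranteed by Proposition~\ref{Weps}, yielding $\beta_f(\bar u,\bar v\mp\vec e\,)=\beta_f(\bar u+\vec e,\bar v\mp\vec e\,)$ -- is sound, and the final step (two values differing at $\bar v\mp\vec e$ for arbitrarily small $\vec e$ contradict local constancy of the integer-valued $\beta_f(\hat u,\cdot)$ at $\bar v$) correctly delivers the discontinuity. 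In short: the paper buys uniformity of $(i)$ and $(ii)$ at the cost of using the Representation Theorem; you buy independence from that theorem at the cost of one extra right-continuity argument.
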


\begin{proof}
Let us  prove assertion $(i)$. Fix $\hat\vv$ such that $\bar\vu\prec\hat\vv\prec\bar{\vv}$. Let $\vec e\in\R^n$, with $\vec e\succ 0$ and $\|\vec e\,\|_{\infty}$ sufficiently small so that $\bar u+\vec e\prec\hat v\prec\bar v-\vec e$. By applying the Multidimensional Representation Theorem~\ref{representation}, and recalling (\ref{mainFormula2}), we get {\setlength\arraycolsep{2pt}
$$
\beta_{f}(\bar u+\vec e,\bar v-\vec e\,) - \beta_{f}(\bar u-\vec e,\bar v-\vec e\,)  = \underset{\newatop{-1\leq s< 1}{t > -1}}{\sum}\mu_{f}(\bar u-s\vec e,\bar v+t\vec e\,) + \underset{-1\leq s< 1}{\sum}\mu_{f}(\bar u-s\vec e,\infty).
$$
Now, $\mu_{f}(\bar\vu,\bar \vv)$ is an addend of the first sum in the above equality. Since $\mu_{f}(\bar\vu,\bar \vv)>0$, we have $\beta_{f}(\bar u+\vec e,\bar v-\vec e)-\beta_{f}(\bar u-\vec e,\bar v-\vec e)>0$. The Multidimensional Jump Monotonicity Lemma~\ref{Jump} implies that $\beta_{f}(\bar u+\vec e,\hat v)-\beta_{f}(\bar u-\vec e,\hat v)\geq\beta_{f}(\bar u+\vec e,\bar v-\vec e)-\beta_{f}(\bar u-\vec e,\bar v-\vec e)$. By the arbitrariness of $\vec e$, we deduce that $\bar u$ is a discontinuity point of $\beta_{f}(\cdot,\hat\vv)$. The proof of assertion $(ii)$ is analogous.}
\end{proof}

\begin{prop}\label{cornerp@infty=>disc}
If $(\bar\vu,\infty)\in D_n^*$ is a cornerpoint at infinity of $\beta_{f}$, then $\bar{\vu}$ is a discontinuity point of $\beta_{f}(\cdot,\hat \vv)$ for every $\bar{\vu}\prec\hat\vv$.
\end{prop}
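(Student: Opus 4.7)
The plan is to mimic the proof of Proposition~\ref{cornerp=>disc}, but rely directly on the definition of multiplicity at infinity rather than the Representation Theorem (since the defining jump for $\mu_f^\infty$ is already of the ``discontinuity of $\beta_f(\cdot,v)$'' shape we want).

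First I would fix an arbitrary $\hat v\succ\bar\vu$ and choose a vector $\vec e\in\R^n$ with $\vec e\succ 0$ and $\|\vec e\|_\infty$ small enough that simultaneously (a) $\bar\vu+\vec e\prec\hat\vv$, and (b) the minimum in Definition~\ref{multiplicityr} is attained at this $\vec e$ together with the point $v^{\varepsilon}=(\varepsilon^{-1},\dots,\varepsilon^{-1})$ supplied by Proposition~\ref{Veps}, so that
\[
\mu_f(\bar\vu,\infty)=\mu_{f,\vec e}^{\infty}(\bar\vu,v^{\varepsilon})=\beta_f(\bar\vu+\vec e,v^{\varepsilon})-\beta_f(\bar\vu-\vec e,v^{\varepsilon}).
\]
By further shrinking $\|\vec e\|_\infty$ if necessary, I can also assume $\hat\vv\preceq v^{\varepsilon}$, which is possible because the coordinates of $v^{\varepsilon}$ blow up to $+\infty$.

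Next I would transport the positive jump at infinity back to $\hat\vv$ via Multidimensional Jump Monotonicity (Lemma~\ref{Jump}). Applying Lemma~\ref{Jump} with $\vec u^{\,1}=\bar\vu-\vec e$, $\vec u^{\,2}=\bar\vu+\vec e$, $\vec v^{\,1}=\hat\vv$, $\vec v^{\,2}=v^{\varepsilon}$ (the hypotheses $\vec u^{\,1}\preceq\vec u^{\,2}\prec\vec v^{\,1}\preceq\vec v^{\,2}$ hold by our choices), one obtains
\[
\beta_f(\bar\vu+\vec e,\hat\vv)-\beta_f(\bar\vu-\vec e,\hat\vv)\;\ge\;\beta_f(\bar\vu+\vec e,v^{\varepsilon})-\beta_f(\bar\vu-\vec e,v^{\varepsilon})=\mu_f(\bar\vu,\infty)>0,
\]
where the final strict inequality uses that $(\bar\vu,\infty)$ is a cornerpoint at infinity.

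Finally, since $\vec e$ can be taken of arbitrarily small norm (all the constraints above still hold when $\vec e$ is further shrunk, because both Proposition~\ref{Veps} and the compatibility condition $\bar\vu+\vec e\prec\hat\vv$ are preserved under scaling $\vec e$ down), every neighborhood of $\bar\vu$ in $\R^n$ contains the pair of points $\bar\vu\pm\vec e$ at which $\beta_f(\cdot,\hat\vv)$ takes different values. Hence $\bar\vu$ is a discontinuity point of $\beta_f(\cdot,\hat\vv)$, as claimed.

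The only delicate step is the simultaneous choice of $\vec e$: it must be small enough to realise the minimum in the definition of $\mu_f(\bar\vu,\infty)$, small enough that the associated $v^{\varepsilon}$ dominates $\hat\vv$, and small enough that $\bar\vu+\vec e\prec\hat\vv$. All three are open conditions on $\|\vec e\|_\infty$ near $0$ (using Proposition~\ref{Veps} for the first and $\hat\vv\succ\bar\vu$ for the other two), so a common $\vec e$ exists and the argument goes through.
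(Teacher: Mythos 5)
Your proof is correct, but it reaches the conclusion by a different route than the paper. The paper settles this proposition by declaring it analogous to Proposition~\ref{cornerp=>disc}$(i)$, whose argument invokes the Multidimensional Representation Theorem~\ref{representation} (via (\ref{mainFormula2})) to expand the jump $\beta_f(\bar u+\vec e,\hat v)-\beta_f(\bar u-\vec e,\hat v)$ into a sum of multiplicities containing $\mu_f(\bar u,\infty)$ as an addend. You instead argue straight from Definition~\ref{multiplicityr}: the monotonicity of $\mu^{\infty}_{f,\vec e}$ in $\vec e$ and $v$ together with Proposition~\ref{Veps} guarantees that the minimum is attained at every sufficiently small $\vec e$ paired with $v^{\varepsilon}$ (this is exactly equality (\ref{partialMultiplicityLine}) in the paper), and a single application of the Jump Monotonicity Lemma~\ref{Jump} with $\bar u-\vec e\preceq\bar u+\vec e\prec\hat v\preceq v^{\varepsilon}$ then transports the positive jump from level $v^{\varepsilon}$ down to the prescribed $\hat v$. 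This shortcut works precisely because $\mu^{\infty}_{f,\vec e}(\bar u,v)$ is already a two-term jump of $\beta_f$ in the first variable, so no four-term cancellation --- and hence no appeal to Theorem~\ref{representation} --- is needed; your version is therefore more elementary and self-contained, at the cost of checking the simultaneous smallness conditions on $\vec e$, which you handle correctly (all three are stable under shrinking $\vec e$). The final step is also sound: since $\beta_f(\bar u+\vec e,\hat v)\neq\beta_f(\bar u-\vec e,\hat v)$ for arbitrarily small $\vec e$, the integer-valued function $\beta_f(\cdot,\hat v)$ cannot be locally constant at $\bar u$, so $\bar u$ is a discontinuity point.
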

\begin{proof}
Analogous to that of Proposition~\ref{cornerp=>disc}$(i)$.
\end{proof}

\section{Stability of persistence spaces}\label{stability}
In this section we prove that small changes in the considered  functions induce not greater changes in the corresponding persistence
spaces. In particular, the distance between two functions $f,g:X\to\R^n$ is measured by $\max_{x\in X}\|f(x)-g(x)\|_{\infty}$, while the distance between $\spc(f)$ and $\spc(g)$ is measured according to the Hausdorff distance induced on $\overline{D_n^*}$ by the $\max$-norm:
$$
d_{H}(\spc(f),\spc(g))=\max\{\sup_{p\in\spc(f)}\inf_{q\in\spc(g)}\|p-q\|_{\infty},\sup_{q\in\spc(g)}\inf_{p\in\spc(f)}\|p-q\|_{\infty}\},
$$
where, for $p=(u,v),q=(u^{_{'}},v^{_{'}})\in\overline{D_n^*}$, we set 
\begin{equation}\label{infNorm}
\|p-q\|_{\infty}=\max\left\{\|u-u^{_{'}}\|_{\infty},\|v-v^{_{'}}\|_{\infty}\right\},
\end{equation}
with the convention that $\infty-\infty=0$ and $v-\infty=\infty-v=\infty$ for 
every $v\in\R^n$. In this way, in $d_H(\spc(f),\spc(g))$, also recalling Remark~\ref{multiplicityrRem0}, points at infinity are compared only with other points at infinity. Moreover, a direct computation yields the following formula for the distance of a point $p=(u,v)\in D_n^+$ to $D_n$:
$$
\inf_{q\in D_n}\|p-q\|_{\infty}=\min_{i=1,\dots,n}\frac{v_i-u_i}{2}.
$$ 
In this setting, our stability result can be stated as follows.
\begin{thm}[Stability Theorem]\label{stabilitySpaces}
Let $f,g:X\to\R^n$ be continuous functions.  Then 
$$
d_{H}(\spc(f),\spc(g))\leq\max_{x\in X}\|f(x)-g(x)\|_{\infty}.
$$  
\end{thm}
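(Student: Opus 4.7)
The plan is to derive stability from a two-sided interleaving of persistent Betti numbers induced by the $L^\infty$ bound on $f-g$. Set $\delta:=\max_{x\in X}\|f(x)-g(x)\|_\infty$ and $\vec\delta:=(\delta,\dots,\delta)$. From the pointwise estimate $|f_i(x)-g_i(x)|\le\delta$ one obtains the sublevel set inclusions $X\langle f\preceq\vu\rangle\subseteq X\langle g\preceq\vu+\vec\delta\rangle\subseteq X\langle f\preceq\vu+2\vec\delta\rangle$ for every $\vu\in\R^n$. Passing to \v Cech homology and inserting the induced maps into the relevant compositions gives, whenever both members lie in $D_n^+$,
$$\beta_g(\vu-\vec\delta,\vv+\vec\delta)\le\beta_f(\vu,\vv)\le\beta_g(\vu+\vec\delta,\vv-\vec\delta),$$
together with the symmetric inequalities obtained by swapping $f$ and $g$.

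By symmetry in $f$ and $g$, the theorem reduces to proving that every $p\in\spc(f)$ admits a partner $q\in\spc(g)$ with $\|p-q\|_\infty\le\delta$. I would split the argument by the position of $p$. If $p\in D_n$, take $q=p$. If $p=(\bar \vu,\bar \vv)$ is a proper cornerpoint with $\min_i(\bar \vv_i-\bar \vu_i)\le 2\delta$, then the formula $\inf_{q\in D_n}\|p-q\|_\infty=\min_i(\bar \vv_i-\bar \vu_i)/2$ recalled just before the statement supplies a $q\in D_n\subseteq\spc(g)$ with $\|p-q\|_\infty\le\delta$. The substantive cases are therefore a proper cornerpoint with $\min_i(\bar \vv_i-\bar \vu_i)>2\delta$, and a cornerpoint at infinity $p=(\bar \vu,\infty)$; in the latter case, since any matching $q$ must satisfy $\|p-q\|_\infty<\infty$, the convention $\infty-\infty=0$ requires $q$ to itself be a point at infinity.

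For the substantive cases I would argue by contradiction, assuming that no cornerpoint of $g$ lies in the closed box $\overline{B_\delta(p)}:=\{(\vu,\vv):\|\vu-\bar\vu\|_\infty\le\delta,\,\|\vv-\bar\vv\|_\infty\le\delta\}$. Choose $\vec e\succ 0$ with $\|\vec e\|_\infty<\delta$ and small enough that $\mu_{f,\vec e\,}(p)=\mu_f(p)>0$; this is legitimate by Definitions~\ref{multiplicityp} and~\ref{multiplicityr} together with Propositions~\ref{Weps} and~\ref{Veps}. Applying the interleaving inequality to each of the four terms of $\mu_{f,\vec e\,}(p)$ defined in (\ref{MMFormula1}) (respectively the two terms in (\ref{MMFormula2})) with the sign that makes the comparison go through, and using the monotonicity of $\beta_g$ from Proposition~\ref{Monotonicity} to align the shifted arguments, I would obtain an inclusion-exclusion sum over $\beta_g$ that dominates $\mu_f(p)$. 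The Multidimensional Representation Theorem~\ref{representation}, read along the direction $\vec e$, identifies this $\beta_g$-sum with a sum of multiplicities of cornerpoints of $g$ lying in $\overline{B_\delta(p)}$. The hypothesis then forces $\mu_f(p)=0$, a contradiction.

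The hard part will be the sign-matching in the sandwich inequality. The interleaving shifts the two arguments of $\beta$ in opposite directions, whereas the four terms of $\mu_{f,\vec e\,}(p)$ themselves alternate in sign, so to turn $\mu_{f,\vec e\,}(p)$ into an honest inclusion-exclusion expression over $\beta_g$ on a box of the form prescribed by Theorem~\ref{representation}, the two sides of the interleaving must be combined with Proposition~\ref{Monotonicity} in a carefully chosen way, and the auxiliary parameters $\vec e$ and the box size must be tuned consistently. Once this algebraic sandwich is in place, the geometric conclusion that every cornerpoint of $f$ admits a partner cornerpoint of $g$ within $\|\cdot\|_\infty$-distance $\delta$, and hence the claimed Hausdorff bound, is immediate.
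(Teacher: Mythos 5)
Your reduction of the theorem to matching cornerpoints (diagonal points to themselves, low-persistence proper cornerpoints to $D_n\subseteq\spc(g)$, points at infinity only to points at infinity) agrees with the paper, and the interleaving $\beta_g(\vu-\vec\delta,\vv+\vec\delta)\le\beta_f(\vu,\vv)\le\beta_g(\vu+\vec\delta,\vv-\vec\delta)$ is indeed available (it is \cite[Lemma~2.5]{CeDi*12}, used in the paper too). But the step you yourself flag as ``the hard part'' is not a bookkeeping issue: it is the actual content of the theorem, and it cannot be carried out with the tools you list. To dominate $\mu_{f,\vec e\,}(\bar u,\bar v)$ by an inclusion--exclusion sum of $\beta_g$ over the corners of an enlarged box, the two negatively signed terms require lower bounds of the form $\beta_f(\bar u-\vec e,\bar v-\vec e\,)\ge\beta_g(\bar u-\vec e-\vec\delta,\bar v-\vec e-\vec\delta\,)$; the interleaving only gives $\beta_f(\bar u-\vec e,\bar v-\vec e\,)\ge\beta_g(\bar u-\vec e-\vec\delta,\bar v-\vec e+\vec\delta\,)$, with the second argument shifted \emph{up}, and Proposition~\ref{Monotonicity} (non-increasing in $v$) moves it the wrong way. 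The same obstruction hits the term $-\beta_f(\bar u+\vec e,\bar v+\vec e\,)$. So no term-by-term combination of the numerical interleaving with monotonicity or Lemma~\ref{Jump} produces the box inequality; the classical Box Lemma of Cohen-Steiner--Edelsbrunner--Harer is proved at the level of the homomorphisms induced by the sublevel-set inclusions, not from rank inequalities, and your proposal never descends to that level. There are two secondary gaps as well: the Representation Theorem~\ref{representation} only counts cornerpoints along the ``diagonal'' sets $\mathcal{L}_{\vec e}(p)$ with $\vec e$ a multiple of $(1,\dots,1)$ (Example~\ref{Stability:Example} shows this restriction is essential), never over a full box $\{\|\vu-\bar\vu\|_\infty\le\delta,\ \|\vv-\bar\vv\|_\infty\le\delta\}$; and your contradiction hypothesis excludes $g$-cornerpoints only from the closed $\delta$-box, while any count you could extract lives in a region of radius $\delta+\|\vec e\|_\infty$ (this one is fixable by assuming the distance from $p$ to $\spc(g)$ strictly exceeds $\delta$).

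The paper circumvents the unprovable sandwich entirely. In Proposition~\ref{Stability:Prop2} it treats only perturbations $\eta$ small relative to the point $\bar p$: by Proposition~\ref{Weps} there is a neighborhood $W_{2\varepsilon}(\bar p)$ free of discontinuities of $\beta_f$, so the two outer terms of the squeeze $\beta_f(\bar u+\vec\nu,\bar v-\vec\nu\,)\le\beta_g(\bar u+\vec\eta+\vec\nu,\bar v-\vec\eta-\vec\nu\,)\le\beta_f(\bar u+2\vec\eta+\vec\nu,\bar v-2\vec\eta-\vec\nu\,)$ are \emph{equal}, forcing $\beta_g=\beta_f$ at each corner; equalities have no sign-matching problem, and Proposition~\ref{Stability:Prop1} then gives an exact local count of $g$-cornerpoints near $\bar p$. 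To pass from ``$\eta$ small'' to an arbitrary bound $\varepsilon=\max_x\|f(x)-g(x)\|_\infty$, Proposition~\ref{stabilityPoints} interpolates $h_\tau=\frac{\varepsilon-\tau}{\varepsilon}f+\frac{\tau}{\varepsilon}g$ and runs a sup/compactness argument on $A=\{\tau:\spc(h_\tau)\cap\overline{\mathcal{L}_{\vec\tau}(p)}\neq\emptyset\}$, using Proposition~\ref{Stability:Prop2} both to show $\sup A\in A$ (so that the limit point really has positive multiplicity, which does not follow from being a limit of cornerpoints) and to push past $\sup A$ if it were less than $\varepsilon$. Without either this local-exactness-plus-interpolation architecture or a map-level Box Lemma argument, your outline does not yield the theorem.
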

The proof of this result is based on the next propositions holding for any continuous function $f:X\to\R^n$. For every $p=(u,v)\in D_n^+$ and every $\vec e\in\R^n$ with $\vec e\succeq 0$, we set 
$$\mathcal{L}_{\vec e}(p)=\{(u-s\vec e,v+t\vec e\,)\in \R^n\times\R^n |\,s,t\in\R\,,-1\leq s<1,-1<t\leq 1\}.$$  
Note that, if $u+\vec e\prec v-\vec e$, then $\mathcal{L}_{\vec e}(p)\subseteq D_n^+$.

\begin{prop}\label{Stability:Prop1}
Let $\bar p=(\bar u,\bar v)\in D_n^+$ and $\vec e\in\R^n$, with $\vec e\succ 0$ and $\bar u+\vec e\prec\bar v-\vec e$. Then
{\setlength\arraycolsep{2pt}
\begin{equation}\label{eq1}
\begin{array}{ccccc} 
\beta_{f}(\bar u+\vec e,\bar v-\vec e\,) & - & 
\beta_{f}(\bar u-\vec e,\bar v-\vec e\,) & + &\\
& - & \beta_{f}(\bar u+\vec e,\bar v+\vec e\,) & + & 
\beta_{f}(\bar u-\vec e,\bar v+\vec e\,)
\end{array}
\end{equation} 
is equal to the cardinality of the set $\mathcal{L}_{\vec e}(\bar p)\cap\spc(f)$, where proper cornerpoints of $\beta_f$ are counted with their multiplicity.} 
\end{prop}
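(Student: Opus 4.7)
My plan is to reduce the claim to a purely combinatorial computation by applying the Multidimensional Representation Theorem~\ref{representation} (in its reformulated form (\ref{mainFormula2})) to each of the four values $\beta_f(\bar u\pm\vec e,\bar v\pm\vec e)$, and then showing that the alternating sum collapses onto precisely the points of $\mathcal{L}_{\vec e}(\bar p)\cap\spc(f)$. Note that the hypothesis $\bar u+\vec e\prec\bar v-\vec e$ guarantees that all four points lie in $D_n^+$, so formula (\ref{mainFormula2}) is available at each of them.

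The key step is a change of summation index. Writing any $u$ among $\bar u\pm\vec e$ as $\bar u+\sigma\vec e$ with $\sigma\in\{+1,-1\}$ and similarly any $v$ as $\bar v+\tau\vec e$ with $\tau\in\{+1,-1\}$, and using (\ref{mainFormula2}) in the direction $\vec e$, I rewrite
$$\beta_f(\bar u+\sigma\vec e,\bar v+\tau\vec e\,)=\sum_{s\geq -\sigma,\;t>\tau}\mu_f(\bar u-s\vec e,\bar v+t\vec e\,)+\sum_{s\geq -\sigma}\mu_f(\bar u-s\vec e,\infty),$$
obtained by the substitution $s\mapsto s-\sigma$ and $t\mapsto t-\tau$. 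Applied to the four corners, this reduces the sum (\ref{eq1}) to an alternating sum of these indicator sums.

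Next I carry out the cancellation. For the \emph{infinity} contribution, the terms indexed by $s\geq -\sigma$ depend only on $\sigma$ and not on $\tau$, so they appear with a net coefficient equal to $(1)+(-1)=0$ for each fixed $\sigma\in\{+1,-1\}$; hence all points at infinity cancel. For the \emph{proper} contribution, combining the four signed sums yields
$$\sum_{-1\leq s<1}\sum_{-1<t\leq 1}\mu_f(\bar u-s\vec e,\bar v+t\vec e\,),$$
since the constraints $s\geq 1$ vs.\ $s\geq -1$ and $t>1$ vs.\ $t>-1$ combine to the indicated rectangle (strict/non-strict inequalities included). This is exactly $\sum_{p\in\mathcal{L}_{\vec e}(\bar p)}\mu_f(p)$, which equals the cardinality of $\mathcal{L}_{\vec e}(\bar p)\cap\spc(f)$ counting proper cornerpoints with their multiplicity (and noting $\mathcal{L}_{\vec e}(\bar p)\subseteq D_n^+$, so no diagonal points contribute).

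The only delicate point I expect is bookkeeping the strict vs.\ non-strict inequalities so that the final summation region matches the definition of $\mathcal{L}_{\vec e}(\bar p)$ (which uses $-1\leq s<1$ and $-1<t\leq 1$) exactly, rather than, say, a region with the opposite conventions on its boundary. This is not conceptually hard, but needs to be handled consistently in all four expansions; it is essentially dictated by the $t>0$ (strict) vs.\ $s\geq 0$ (non-strict) convention in formula (\ref{mainFormula2}).
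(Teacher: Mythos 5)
Your proof is correct and follows exactly the route the paper takes: its proof of Proposition~\ref{Stability:Prop1} is precisely ``apply the Multidimensional Representation Theorem and recall (\ref{mainFormula2})'', and your expansion at the four corners with the index shifts $s\mapsto s-\sigma$, $t\mapsto t-\tau$, the cancellation of the infinity terms, and the resulting region $-1\leq s<1$, $-1<t\leq 1$ is just that argument written out in full. The sign and boundary bookkeeping you flag does work out as you state, so nothing is missing.
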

\begin{proof}
It is sufficient to apply the Multidimensional Representation Theorem, and recall (\ref{mainFormula2}).
\end{proof}
Note that, by the finiteness of $\beta_f$ and the Multidimensional Jump Monotonicity Lemma~\ref{Jump}, the sum in (\ref{eq1}) returns necessarily a non-negative, integer number. Thus, the cardinality of the set $\mathcal{L}_{\vec e}(\bar p)\cap\spc(f)$ is finite, and corresponds to the number of proper cornerpoints of $\beta_f$, counted with their multiplicity, in $\mathcal{L}_{\vec e}(\bar p)$. 

\begin{prop}\label{Stability:Prop2}
Let $\bar p=(\bar u,\bar v)\in D_n^+$. A real value $\bar\eta>0$ exists such that, for every $\eta\in\R$ with $0\leq\eta\leq\bar\eta$, the following statements hold:
\begin{enumerate}
\item[$(i)$] For $\vec\eta=(\eta,\eta,\dots,\eta)\in\R^n$, the set $$\overline{\mathcal{L}_{\vec\eta}(\bar p)}=\{(\bar u-s\vec\eta,\bar v+t\vec\eta\,)\in\R^n\times\R^n |\,s,t\in\R\,,-1\le s\le 1,-1\le t\leq 1\}$$ is contained in $D_n^+$;
\item[$(ii)$] For every continuous function $g:X\to\R^n$ with $\max_{x\in X}\|f(x)-g(x)\|_{\infty}\leq\eta$, the persistence space $\spc(g)$ has exactly $\mu_f(\bar p)$ proper cornerpoints in $\overline{\mathcal{L}_{\vec\eta}(\bar p)}$, counted with multiplicity.
\end{enumerate}

\end{prop}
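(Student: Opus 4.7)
Part (i) is immediate: picking $\bar\eta_0<\tfrac{1}{2}\min_i(\bar v_i-\bar u_i)$ ensures $\bar u+\vec\eta\prec\bar v-\vec\eta$ and hence $\overline{\mathcal{L}_{\vec\eta}(\bar p)}\subseteq D_n^+$ for every $\eta\leq\bar\eta_0$.

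For (ii), the plan is to combine the local constancy of $\beta_f$ near $\bar p$ with the standard interleaving between $\beta_f$ and $\beta_g$. By Proposition~\ref{Weps} there exists $\bar\delta>0$ with $W_{\bar\delta}(\bar p)\subseteq D_n^+$ free of discontinuities of $\beta_f$. Since $\beta_f$ is integer-valued, it is constant on each of the four connected components of $W_{\bar\delta}(\bar p)$ -- the ``octants'' determined by $u\prec\bar u$ vs.\ $u\succ\bar u$ and $v\prec\bar v$ vs.\ $v\succ\bar v$. I would denote these constants by $A,B,C,D$ (for the respective octants $(u\prec\bar u,v\prec\bar v)$, $(u\succ\bar u,v\prec\bar v)$, $(u\prec\bar u,v\succ\bar v)$, $(u\succ\bar u,v\succ\bar v)$), so that $\mu_f(\bar p)=B-A-D+C$ by Definition~\ref{multiplicityp}. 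Then set $\bar\eta:=\min(\bar\eta_0,\bar\delta/3)$. Given $\eta\leq\bar\eta$ and $g$ with $\max_x\|f(x)-g(x)\|_\infty\leq\eta$, the sublevel-set inclusions $X\langle f\preceq w-\vec\eta\rangle\subseteq X\langle g\preceq w\rangle\subseteq X\langle f\preceq w+\vec\eta\rangle$ produce the classical interleaving
\[
\beta_f(u-\vec\eta,v+\vec\eta)\leq\beta_g(u,v)\leq\beta_f(u+\vec\eta,v-\vec\eta).
\]
Picking any $\eta_1\in(\eta,\bar\delta-\eta)$ (nonempty since $\bar\eta\leq\bar\delta/3$), I would apply this interleaving at each of the four points $(\bar u\pm\vec\eta_1,\bar v\mp\vec\eta_1)$. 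Because $0<\eta_1-\eta$ and $\eta_1+\eta<\bar\delta$, both the lower and upper shifts of each argument stay in the same octant as the unperturbed one, thereby pinning $\beta_g(\bar u+\vec\eta_1,\bar v-\vec\eta_1)=B$, $\beta_g(\bar u-\vec\eta_1,\bar v-\vec\eta_1)=A$, $\beta_g(\bar u+\vec\eta_1,\bar v+\vec\eta_1)=D$ and $\beta_g(\bar u-\vec\eta_1,\bar v+\vec\eta_1)=C$. Proposition~\ref{Stability:Prop1} then says that the number $N_g(\eta_1)$ of $g$-cornerpoints in $\mathcal{L}_{\vec\eta_1}(\bar p)$ equals $B-A-D+C=\mu_f(\bar p)$.

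It only remains to transfer this count to the closed box $\overline{\mathcal{L}_{\vec\eta}(\bar p)}$. Since $\overline{\mathcal{L}_{\vec\eta}(\bar p)}\subsetneq\mathcal{L}_{\vec\eta_1}(\bar p)$ for every $\eta_1>\eta$, the closed-box count is at most $\mu_f(\bar p)$. For the reverse inequality, the very same argument applied for every $\eta_1'\in(\eta,\bar\delta-\eta)$ gives $N_g(\eta_1')=\mu_f(\bar p)$; hence if a $g$-cornerpoint $p^*$ were to lie in $\mathcal{L}_{\vec\eta_1}(\bar p)\setminus\overline{\mathcal{L}_{\vec\eta}(\bar p)}$, then choosing $\eta_1'\in(\eta,\eta_1)$ close enough to $\eta$ would exclude $p^*$ from $\mathcal{L}_{\vec\eta_1'}(\bar p)$ and force $N_g(\eta_1')<N_g(\eta_1)$, a contradiction. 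Therefore every $g$-cornerpoint of $\mathcal{L}_{\vec\eta_1}(\bar p)$ actually sits in $\overline{\mathcal{L}_{\vec\eta}(\bar p)}$, and the count equals exactly $\mu_f(\bar p)$. The most delicate points will be verifying the octant containment of the eight perturbed arguments and handling the closed-versus-half-open bookkeeping in this last step.
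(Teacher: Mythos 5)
Your proof is correct and follows essentially the same route as the paper's: Proposition~\ref{Weps} gives local constancy of $\beta_f$ on the four components of a discontinuity-free neighborhood of $\bar p$, the sublevel-set interleaving (the paper's citation of \cite[Lemma~2.5]{CeDi*12}) pins the four values of $\beta_g$, Proposition~\ref{Stability:Prop1} turns them into a cornerpoint count, and a shrinking argument passes from the open boxes to the closed one. The only cosmetic differences are that you identify $\mu_f(\bar p)$ directly with the alternating sum of the four octant constants (the paper instead equates the $g$-count with the $f$-count and then invokes Proposition~\ref{cornerp=>disc}), and you replace the paper's nested intersection over $\nu\to 0$ with a two-radius contradiction argument.
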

\begin{proof}
By Proposition~\ref{Weps} we know that a sufficiently small real number $\varepsilon >0$ exists such that the set $W_{2\varepsilon}(\bar p)$ is entirely contained in $D_n^+$, and does not contain any discontinuity point of $\beta_f$. Clearly, for every $\eta$ with $0\le\eta\leq\varepsilon$, the set $\overline{\mathcal{L}_{\vec\eta}(\bar p)}$ is contained in $D_n^+$ thus proving $(i)$.

In order to prove $(ii)$, let $\bar\eta$ be any real number such that $0<\bar\eta<\frac{\varepsilon}{2}$, and let $\eta\in\R$ be such that $0\leq\eta\leq\bar\eta$. 

If $\eta=0$, then $g=f$, $\overline{\mathcal{L}_{\vec\eta}(\bar p)}=\{\bar p\}$ and hence the claim follows. Otherwise, if $\eta>0$, let us take a sufficiently small real number $\nu$ with $0<\nu<\eta$. We have $\eta+\nu<\varepsilon$ and $2\eta+\nu<2\varepsilon$. Moreover, setting $\vec\nu =(\nu,\nu,\dots,\nu)\in\R^n$, it holds that $\bar u+2\vec\eta+\vec\nu\prec\bar v-2\vec\eta-\vec\nu$. Now, if $g:X\to\R^n$ is a continuous function for which $\max_{x\in X}\|f(x)-g(x)\|_{\infty}\leq\eta$, by \cite[Lemma~2.5]{CeDi*12} we get 
$$
\beta_f(\bar u+\vec\nu,\bar v-\vec\nu\,)\leq\beta_g(\bar u+\vec\eta+\vec\nu,\bar v-\vec\eta-\vec\nu\,)\leq\beta_f(\bar u+2\vec\eta+\vec\nu,\bar v-2\vec\eta-\vec\nu\,).
$$
\begin{figure}
\psfrag{u}{$\bar u$}\psfrag{n}{$\bar u+\vec\nu$}\psfrag{n+m}{$\bar u+\vec\eta+\vec\nu$}\psfrag{n+2m}{$\bar u+2\vec\eta+\vec\nu$}\psfrag{e}{$2\varepsilon$}
\psfrag{v}{$\bar v$}\psfrag{nv}{$\bar v-\vec\nu$}\psfrag{m+n}{$\bar v-\vec\eta-\vec\nu$}\psfrag{m+2n}{$\bar v-2\vec\eta-\vec\nu$}
\includegraphics[width=0.5\textwidth]{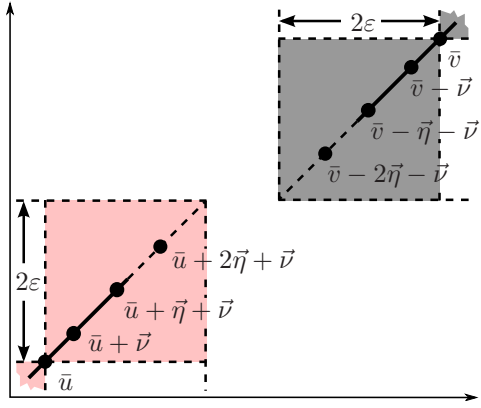}
\caption{Points relevant to the proof of Proposition~\ref{Stability:Prop2}. Bold segments represent part of the set $\mathcal{L}_{\vec\varepsilon}(\bar p)\subseteq W_{2\varepsilon}(\bar p)$ used in that proof.}\label{fig:Leta}
\end{figure}
Since $\beta_f$ is constant on each connected component of $W_{2\varepsilon}(\bar p)$, and $(\bar u+\vec\nu,\bar v-\vec\nu\,)$ and $(\bar u+2\vec\eta+\vec\nu,\bar v-2\vec\eta-\vec\nu\,)$ belong to the same connected component of $W_{2\varepsilon}(\bar p)$, as illustrated in Figure~\ref{fig:Leta}, we have 
$$
\beta_f(\bar u+\vec\nu,\bar v-\vec\nu\,)=\beta_f(\bar u+2\vec\eta+\vec\nu,\bar v-2\vec\eta-\vec \nu\,),
$$
thus implying that $\beta_f(\bar u+\vec\eta+\vec\nu,\bar v-\vec\eta-\vec\nu\,)=\beta_g(\bar u+\vec\eta+\vec\nu,\bar v-\vec\eta-\vec\nu\,)$. Analogously, $\beta_f(\bar u-\vec\eta-\vec\nu,\bar v-\vec\eta-\vec\nu\,)=\beta_g(\bar u-\vec\eta-\vec\nu,\bar v-\vec\eta-\vec\nu\,)$, $\beta_f(\bar u+\vec\eta+\vec\nu,\bar v+\vec\eta+\vec\nu\,)=\beta_g(\bar u+\vec\eta+\vec\nu,\bar v+\vec\eta+\vec\nu\,)$ and $\beta_f(\bar u-\vec\eta-\vec\nu,\bar v+\vec\eta+\vec\nu\,)=\beta_g(\bar u-\vec\eta-\vec\nu,\bar v+\vec\eta+\vec\nu\,)$. 

Therefore, by Proposition~\ref{Stability:Prop1} applied both to $g$ and $f$, the number of proper cornerpoints of $\beta_g$  in $\mathcal{L}_{\vec\eta+\vec\nu}(\bar p)$, counted with multilicity, is equal to that of $\beta_f$. On the other hand,  ${\mathcal{L}_{\vec\eta+\vec\nu}(\bar p)}\subseteq\mathcal{L}_{\vec\varepsilon}(\bar p)$. Hence, by Proposition~\ref{cornerp=>disc}, recalling that $W_{2\varepsilon}(\bar p)$ does not contain any discontinuity point of $\beta_f$, no proper cornerpoint of $\beta_f$ is in ${\mathcal{L}_{\vec\eta+\vec\nu}(\bar p)}$, except possibly for $\bar p$. In conclusion,  $\mu_f(\bar p)$ equals the number of proper cornerpoints of $\beta_g$, counted with multiplicity, contained in the set $\mathcal{L}_{\vec\eta+\vec\nu}(\bar p)$.

This is true for every sufficiently small $\nu>0$. Therefore, $\mu_f(\bar p)$ is equal to the number of cornerpoints of $\beta_g$ contained in the intersection $\bigcap_{\nu>0}\mathcal{L}_{\vec\eta+\vec\nu}(\bar p)$, thus proving the claim.
\end{proof}

Before going on, we remark that the proposed proof of Proposition~\ref{Stability:Prop2} cannot work without assuming that $\vec\varepsilon$, $\vec\eta$ and $\vec\nu$ are multiples of $\vec 1=(1,1,\dots,1)\in\R^n$. The obstruction is in the application of \cite[Lemma~2.5]{CeDi*12}. Also, such requirement ensures that the intersection $\bigcap_{\nu>0}\mathcal{L}_{\vec\eta+\vec\nu}(\bar p)$ in the end of the proof is over a sequence of nested sets, thus implying the claim. These remarks actually mirror the fact that Proposition~\ref{Stability:Prop2} does not hold without our assumptions on $\vec\eta$, as shown by the following example where we take $\vec\eta\in\R^2$ with $\vec\eta=\left(\eta,\frac{\eta}{2}\right)$. Similar examples for which Proposition~\ref{Stability:Prop2} does not hold can be built for every $\vec\eta\neq (\eta,\eta)$.

\begin{example}\label{Stability:Example}
Let $X$ be the closed interval $[0,1]$, and let $f,g:X\to\R^2$ be two functions linearly interpolating the following values: $f(0)=g(0)=(0,0)$, $f(1)=g(1)=(0,-1)$, $f\left(\frac{1}{2}\right)=(2,1)$ and $g\left(\frac{1}{2}\right)=(2,1+\eta)$, with $\eta>0$, as depicted in Figure~\ref{fig:Example}. We have $\max_{x\in X}\|f(x)-g(x)\|_{\infty}=\eta$. 

Set $\bar u=(0,0)$ and $\bar v=(2,1)$. According to Definition~\ref{multiplicityp}, the point $\bar p=(\bar u,\bar v)$ has multiplicity $\mu_f(\bar p)$ equal to 1 (we are considering 0$th$ homology). Hence, $\bar p\in\spc(f)$. Suppose now $\eta$ sufficiently small so that, taking $\vec\eta=\left(\eta,\frac{\eta}{2}\right)$, the set $\overline{\mathcal{L}_{\vec\eta}(\bar p)}$ is entirely contained in $D_n^+$. In this case it is easy to check that, in contrast with Proposition~\ref{Stability:Prop2}, $\overline{\mathcal{L}_{\vec\eta}(\bar p)}$ does not contain points of $\spc(g)$.      
\begin{figure}
\psfrag{u}{$\bar u$}\psfrag{1+e}{$1+\eta$}\psfrag{2}{2}
\psfrag{v}{$\bar v$}\psfrag{-1}{-1}\psfrag{0}{0}\psfrag{1}{1}
\includegraphics[width=\textwidth]{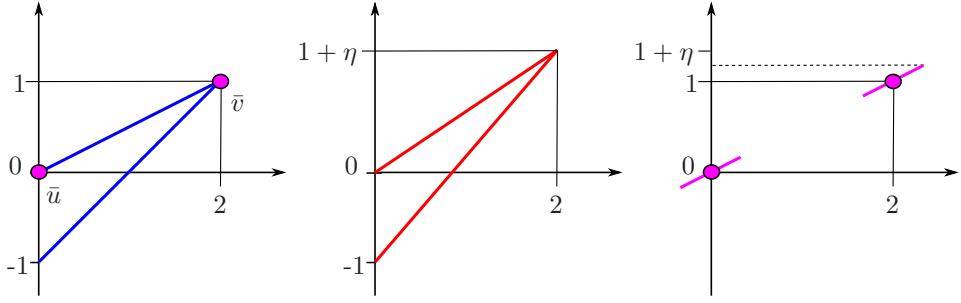}
\caption{$\I f$ (bold blue line, left), $\I g$ (bold red line, center) and a representation of the set $\overline{\mathcal{L}_{\vec\eta}(\bar p)}$ (bold purple segments, right) as defined in Example~\ref{Stability:Example}.}
\label{fig:Example}
\end{figure}
\end{example}
 
\begin{prop}\label{stabilityPoints}
Let $f,g:X\to\R^n$ be two continuous functions such that $\max_{x\in X}\|f(x)-g(x)\|_{\infty}\leq\varepsilon$. Then, for every proper cornerpoint $p\in\spc(f)$, a point $q\in\spc(g)$ exists such that $\|p-q\|_{\infty}\leq\varepsilon$.
\end{prop}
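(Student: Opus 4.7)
My plan is a continuation argument along the linear homotopy $f_t := (1-t)f + tg$, $t\in[0,1]$, using Proposition~\ref{Stability:Prop2} as the local tool for propagating the existence of a nearby cornerpoint.

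As a preliminary reduction, if $\min_{i}(\bar v_i - \bar u_i)/2 \leq \varepsilon$, then the distance-to-$D_n$ formula stated just before Theorem~\ref{stabilitySpaces} places $p = (\bar u, \bar v)$ within $\varepsilon$ of $D_n$; since $D_n \subseteq \spc(g)$ by Definition~\ref{persSpace}, any point of $D_n$ realizing this distance serves as the desired $q$. Hence I may assume $\bar u + \vec\varepsilon \prec \bar v - \vec\varepsilon$, which ensures $\overline{\mathcal{L}_{t\vec\varepsilon}(p)} \subseteq D_n^+$ for every $t\in[0,1]$.

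Observing that $\max_{x\in X}\|f_s(x)-f_t(x)\|_\infty = |s-t|\varepsilon$, I would then set
\[
T = \sup\bigl\{\,t\in[0,1] : \exists\, q\in\spc(f_t) \text{ with } \|q - p\|_\infty \leq t\varepsilon\,\bigr\}.
\]
Proposition~\ref{Stability:Prop2}(ii) applied to $p$ and $f$ yields $T>0$. The goal is to show that $T=1$ and that the supremum is attained, since the corresponding point of $\spc(f_1)=\spc(g)$ would then be the required $q$. The propagation direction is straightforward: if the supremum is attained at some $q^*\in\spc(f_T)$ with $\|q^*-p\|_\infty\leq T\varepsilon$, then applying Proposition~\ref{Stability:Prop2}(ii) to $q^*$ and $f_T$ furnishes $\bar\eta'>0$ such that for every $t\in\bigl(T,\min\{1,T+\bar\eta'/\varepsilon\}\bigr]$, $\spc(f_t)$ contains a cornerpoint in $\overline{\mathcal{L}_{(t-T)\vec\varepsilon}(q^*)}$, which by the triangle inequality lies within $t\varepsilon$ of $p$, contradicting $T<1$.

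The main obstacle is thus the attainment of the supremum: taking $t_n\uparrow T$ and $q_n\in\spc(f_{t_n})$ with $\|q_n-p\|_\infty\leq t_n\varepsilon$ and extracting a subsequential limit $q^*$ from the compact closed $T\varepsilon$-ball, one must verify that $q^*\in\spc(f_T)$. The case $q^*\in D_n$ is automatic; the delicate case is $q^*\in D_n^+$ with $\mu_{f_T}(q^*)=0$. Here I would combine Proposition~\ref{Weps} applied to $f_T$ at $q^*$, which supplies a punctured neighborhood on which $\beta_{f_T}$ is locally constant on each orthant, with \cite[Lemma~2.5]{CeDi*12} to transfer this orthant-wise constancy to $\beta_{f_{t_n}}$ outside a strip of width $O(|t_n-T|\varepsilon)$ about the axes through $q^*$; cornerpoints of $\beta_{f_{t_n}}$ must then lie in this thin strip, and Proposition~\ref{Stability:Prop2}(ii) applied at $q^*$ (which forces the cornerpoint count in $\overline{\mathcal{L}_{\vec\eta}(q^*)}$ to equal $\mu_{f_T}(q^*)=0$), together with a localization of the count to the strip, would contradict $q_n\to q^*$ and complete the attainment step.
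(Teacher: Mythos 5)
Your overall scheme (continuation along the linear homotopy, with Proposition~\ref{Stability:Prop2} as the propagation tool, and the preliminary reduction to the case $\min_i(\bar v_i-\bar u_i)/2>\varepsilon$ via $D_n\subseteq\spc(g)$) is the same as the paper's, and your propagation step at an attained supremum is sound. The genuine gap is exactly where you locate it: attainment of the supremum. You define $T$ using the full max-norm ball $\|q-p\|_\infty\le t\varepsilon$, so the approximating cornerpoints $q_n\in\spc(f_{t_n})$ may approach $q^*$ from arbitrary directions, and your proposed repair does not close this. First, Proposition~\ref{Weps} does \emph{not} give local constancy of $\beta_{f_T}$ ``on each orthant'' of a punctured neighborhood of $q^*$: the set $W_\varepsilon(q^*)$ involves $\R^n_{\pm}(\cdot)$, i.e.\ only the two full cones $u\prec q^*_u$ or $u\succ q^*_u$ (and likewise for $v$), hence only $4$ of the $2^n\times 2^n$ orthant products. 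For $n\ge 2$ the discontinuity set of $\beta_{f_T}$ can pass through the ``mixed'' regions arbitrarily close to $q^*$ (e.g.\ a non--axis-parallel wall through $q^*_u$ meets neither cone), so the interleaving inequality of \cite[Lemma~2.5]{CeDi*12} cannot force constancy of $\beta_{f_{t_n}}$ there, and the cornerpoints of $\beta_{f_{t_n}}$ near $q^*$ are \emph{not} confined to a thin strip about coordinate hyperplanes. Second, even granting such a confinement, Proposition~\ref{Stability:Prop2} only counts cornerpoints of the perturbed function inside the two-parameter diagonal slice $\overline{\mathcal{L}_{\vec\eta}(q^*)}$; it provides no localization of nearby cornerpoints of $\spc(f_{t_n})$ to that slice (Example~\ref{Stability:Example} is precisely a warning that perturbed cornerpoints drift off prescribed non-diagonal slices), so knowing that the count in $\overline{\mathcal{L}_{\vec\eta}(q^*)}$ equals $\mu_{f_T}(q^*)=0$ does not contradict $q_n\to q^*$ with $q_n$ outside the slice. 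The ``localization of the count to the strip'' you invoke is the hard missing step, not something available from the paper's toolkit.

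The paper avoids this problem by building the diagonal constraint into the continuation itself: it takes $A=\{\tau\in[0,\varepsilon]:\spc(h_\tau)\cap\overline{\mathcal{L}_{\vec\tau}(p)}\neq\emptyset\}$, i.e.\ it only records cornerpoints that are \emph{diagonal} translates of $p$. Then any convergent sequence $q_j\in\spc(h_{\tau_j})\cap\overline{\mathcal{L}_{\vec\tau_j}(p)}$ has limit $q\in\overline{\mathcal{L}_{\vec\tau_*}(p)}$, and the differences $q_j-q$ are again diagonal, so $q_j\in\overline{\mathcal{L}_{\vec\eta}(q)}$ for large $j$; Proposition~\ref{Stability:Prop2} applied at $q$ then gives the exact count in that slice and forces $\mu_{h_{\tau_*}}(q)\ge 1$. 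If you want to salvage your ball-based formulation, you should either restrict your set defining $T$ to diagonal translates of $p$ as the paper does (the triangle-inequality steps you use still go through, since compositions of diagonal shifts are diagonal), or supply a genuinely new argument for upper semicontinuity of multiplicity along arbitrary convergent sequences of cornerpoints, which the present sketch does not provide.
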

\begin{proof}
Let $p\in\spc(f)$. If a point $q\in D_n$ exists, for which $\|p-q\|_{\infty}\leq\varepsilon$, then there is nothing to prove because, by definition, $q\in\spc(g)$. Hence, let us assume that $\|p-q\|_{\infty}>\varepsilon$ for all $q\in D_n$. For every $\tau\in[0,\varepsilon]$, let $h_{\tau}$ be the function defined as $h_{\tau}=\frac{\varepsilon-\tau}{\varepsilon}f+\frac{\tau}{\varepsilon}g$. Note that, for every $\tau,\tau^{_{'}}\in[0,\varepsilon]$, we have $\max_{x\in X}\|h_{\tau}(x)-h_{\tau^{_{'}}}(x)\|_{\infty}\leq |\tau-\tau^{_{'}}|$. 

For $\tau\in[0,\varepsilon]$, let $\vec\tau=(\tau,\tau,\dots,\tau)\in\R^n$. Since $\|p-q\|_{\infty}>\varepsilon$ for all $q\in D_n$, we have that $\overline{\mathcal{L}_{\vec\tau}(p)}\subset D_n^+$ for every $\tau\in[0,\varepsilon]$. Now, consider the set 
$$
A=\left\{\tau\in[0,\varepsilon]:\exists\,q_{\tau}\in\spc(h_{\tau})\cap\overline{\mathcal{L}_{\vec\tau}(p)}\right\}.
$$
$A$ is non-empty, since $0\in A$. Let us set $\tau_*=\sup A$ and show that $\tau_*\in A$. Indeed, let $(\tau_j)$ be a sequence in $A$ converging to $\tau_*$. Since $\tau_j\in A$, for each $j$ there is a cornerpoint $q_j\in\spc(h_{\tau_j})$ such that $q_j\in\overline{\mathcal{L}_{\vec\tau_j}(p)}\subseteq\overline{\mathcal{L}_{\vec\varepsilon}(p)}$. By the compactness of $\overline{\mathcal{L}_{\vec\varepsilon}(p)}$, possibly by extracting a convergent subsequence, we can define $q=\lim_j q_j$. 

We have that $q\in\overline{\mathcal{L}_{\vec\tau_*}(p)}$. Indeed, if $q\not\in\overline{\mathcal{L}_{\vec\tau_*}(p)}$, then for every sufficiently large index $j$, we have $q_j\not\in\overline{\mathcal{L}_{\vec\tau_*}(p)}$. On the other hand, since $\tau_j\leq\tau_*$ for all $j$, it holds that $\overline{\mathcal{L}_{\vec\tau_j}(p)}\subseteq\overline{\mathcal{L}_{\vec\tau_*}(p)}$, thus giving a contradiction.

Moreover, the multiplicity $\mu_{h_{\tau_*}}(q)$ of $q$ for $\beta_{h_{\tau_*}}$ is strictly positive. Indeed, since $\tau_j\to\tau_*$ and $q\in\overline{\mathcal{L}_{\vec\tau_*}(p)}$, for every arbitrarily small $\eta>0$ and any sufficiently large $j$, the set $\overline{\mathcal{L}_{\vec\eta}(q)}$, with $\vec\eta=(\eta,\eta,\dots,\eta)\in\R^n$, contains at least one cornerpoint $q_j\in\spc(h_{\tau_j})$. But Proposition~\ref{Stability:Prop2} implies that, for each sufficiently small $\eta >0$, the set $\overline{\mathcal{L}_{\vec\eta}(q)}$ contains exactly as many cornerpoints of $\spc(h_{\tau_j})$ as $\mu_{h_{\tau_*}}(q)$, provided that $|\tau_j-\tau_*|\leq\eta$. Therefore, the multiplicity $\mu_{h_{\tau_*}}(q)$ is strictly positive, thus implying that $\tau_*\in A$.

To conclude the proof, we have to show that $\max A=\varepsilon$. If $\tau_*<\varepsilon$, by using Proposition~\ref{Stability:Prop2} once again we see that there exist a real value $\eta >0$ with $\tau_*+\eta<\varepsilon$, and a point $q_{\tau_*+\eta}\in\spc(h_{\tau_*+\eta})$ for which $q_{\tau_*+\eta}\in\overline{\mathcal{L}_{\vec\eta}(q)}$. Consequently, $\|q-q_{\tau_*+\eta}\|_{\infty}\leq\eta$. Thus, by the triangular inequality we would have $\|p-q_{\tau_*+\eta}\|_{\infty}\leq\tau_*+\eta$ and hence $q_{\tau_*+\eta}\in\overline{\mathcal{L}_{\vec\tau_*+\vec\eta}(p)}$, implying that $\tau_*+\eta\in A$. Obviously, this would contradict the fact that $\tau_*=\max A$. Therefore, $\varepsilon=\max A$, so that $\varepsilon\in A$. Clearly, this proves the claim.
\end{proof}

In analogy to proper points, we prove the equivalent of Propositions~\ref{Stability:Prop1}, ~\ref{Stability:Prop2} and \ref{stabilityPoints} for points at infinity. For every $p=(u,\infty)\in D_n^*$ and every $\vec e\in\R^n$ with $\vec e\succeq 0$, we set 
$$
\mathcal{N}_{\vec e}(p)=\{(u-s\vec e,\infty)\in D_n^*|-1\leq s< 1\}. 
$$
Further, for every real value $\varepsilon>0$ we denote by $v^{_{\varepsilon}}$ the $n$-tuple $(\varepsilon^{_{-1}},\varepsilon^{_{-1}},\dots,\varepsilon^{_{-1}})$. 

\begin{prop}\label{Stability:Prop4}
Let $\bar p=(\bar u,\infty)\in D_n^*$. A sufficiently small real value $\varepsilon>0$ exists such that $\bar u+\vec e\prec v^{_{\varepsilon}}$ for all $\vec e\in\R^n$ with $\vec e\succ 0$ and $\|\vec e\,\|_{\infty}<\varepsilon$. Moreover, for every $v\succ v^{_{\varepsilon}}$,
\begin{equation}\label{eq10}
\beta_f(\bar u+\vec e,v)-\beta_f(\bar u-\vec e,v)
\end{equation}
is equal to the cardinality of the set $\mathcal{N}_{\vec e}(\bar p)\cap\spc(f)$, where cornerpoints at infinity of $\beta_f$ are counted with their multiplicity.
\end{prop}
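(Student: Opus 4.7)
The plan is to imitate the proof of Proposition~\ref{Stability:Prop1}, replacing the role of Proposition~\ref{Weps} by its at-infinity counterpart, Proposition~\ref{Veps}. First I would invoke Proposition~\ref{Veps} to fix a real $\varepsilon_0>0$ such that $V_{\varepsilon_0}(\bar p)\subseteq D_n^+$ and contains no discontinuity point of $\beta_f$; then I would take $\varepsilon\in(0,\varepsilon_0/2]$ small enough that $\bar u_i+\varepsilon<1/\varepsilon$ for every coordinate $i$. The latter inequality forces $\bar u+\vec e\prec v^{_{\varepsilon}}$ for all $\vec e\succ 0$ with $\|\vec e\,\|_\infty<\varepsilon$, giving the first assertion of the proposition.

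For the equality in~(\ref{eq10}) I would apply the Multidimensional Representation Theorem~\ref{representation}, in the reformulated version~(\ref{mainFormula2}), along the direction $\vec e$ to each of $\beta_f(\bar u+\vec e,v)$ and $\beta_f(\bar u-\vec e,v)$, and subtract. After the reindexings $s\mapsto s-1$ and $s\mapsto s+1$ respectively, the difference becomes
$$
\sum_{\substack{-1\le r<1\\ t>0}}\mu_f(\bar u-r\vec e,\,v+t\vec e)\;+\;\sum_{-1\le r<1}\mu_f(\bar u-r\vec e,\infty).
$$
The second summand is, by the definitions of $\mathcal{N}_{\vec e}(\bar p)$ and of the persistence space, exactly the cardinality of $\mathcal{N}_{\vec e}(\bar p)\cap\spc(f)$ counted with multiplicities, so the argument reduces to showing that the first summand vanishes whenever $v\succ v^{_{\varepsilon}}$.

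The real work is therefore to prove $\mu_f(\bar u-r\vec e,v+t\vec e)=0$ for every $r\in[-1,1)$ and every $t>0$. I would compute this multiplicity directly from~(\ref{MMFormula1}) with an auxiliary $\vec e'\succ 0$ of very small norm, and check that the four evaluation points $(\bar u-r\vec e\pm\vec e',\,v+t\vec e\pm\vec e')$ all lie inside $V_{\varepsilon_0}(\bar p)$: the bound $|r|\,\|\vec e\,\|_\infty+\|\vec e'\,\|_\infty<\varepsilon_0$ controls the first coordinate, while $v\succ v^{_{\varepsilon}}$ together with $t>0$ and $\|\vec e'\,\|_\infty$ sufficiently small controls the second. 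By Proposition~\ref{Veps} the integer-valued $\beta_f$ is locally constant on $V_{\varepsilon_0}(\bar p)$, hence constant on each of its two connected components $\{u\prec\bar u\}$ and $\{u\succ\bar u\}$. When $r\ne 0$ all four evaluation points sit inside a single component, so the alternating sum in~(\ref{MMFormula1}) is immediately zero; when $r=0$ they split symmetrically between the two components, but the values pair up by first coordinate and the four terms cancel to zero.

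The most delicate step is this last component-by-component bookkeeping, especially the degenerate case $r=0$ in which $\bar u\notin\R^n_\pm(\bar u)$ and one must invoke Proposition~\ref{Veps} on the two connected components of $V_{\varepsilon_0}(\bar p)$ simultaneously; the rest amounts to a direct reindexing in the Representation Theorem.
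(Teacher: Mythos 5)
Your proposal is correct, but it replaces the paper's mechanism for the key step (the vanishing of the proper-cornerpoint sum) with a different one. After the same reduction via Theorem~\ref{representation} in the form (\ref{mainFormula2}), the paper does not argue locally at all: it simply strengthens the choice of $\varepsilon$ so that $\varepsilon^{-1}\geq\max_{x\in X}\|f(x)\|_{\infty}$, whence $X\langle f\preceq v^{_{\varepsilon}}\rangle=X$; consequently no $v\succ v^{_{\varepsilon}}$ can be a discontinuity point of $\beta_f(u,\cdot)$ for any $u\prec v$, and Proposition~\ref{cornerp=>disc}$(ii)$ then rules out \emph{every} proper cornerpoint with second coordinate beyond $v^{_{\varepsilon}}$, killing the whole first sum in one stroke. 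You instead invoke Proposition~\ref{Veps} to get a discontinuity-free set $V_{\varepsilon_0}(\bar p)$ and compute each multiplicity $\mu_f(\bar u-r\vec e,v+t\vec e)$ directly from (\ref{MMFormula1}), using that the integer-valued $\beta_f$ is constant on each of the two components of $V_{\varepsilon_0}(\bar p)$ and checking the $\pm$ cancellation (including the $r=0$ case where the four evaluation points split between the components). This is sound: since $\mu_{f,\vec e'}$ is non-negative and monotone in $\vec e'$, exhibiting one admissible small $\vec e'$ with vanishing alternating sum suffices, and your bounds do place the evaluation points in $V_{\varepsilon_0}(\bar p)$ once $\varepsilon\leq\varepsilon_0/2$ — the only detail you leave implicit is that for $r\neq 0$ one must also take $\vec e'\prec|r|\vec e$ componentwise so that the perturbed first coordinates stay in $\R^n_{\pm}(\bar u)$, not just within distance $\varepsilon_0$ of $\bar u$; the distance bound alone does not give membership in $V_{\varepsilon_0}(\bar p)$. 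Comparing the two: the paper's route is shorter and exploits the global saturation of sublevel sets plus the propagation result~\ref{cornerp=>disc}, exactly as it later does again in Proposition~\ref{Stability:Prop5}; your route is more local and mirrors the bookkeeping already used in the proof of Theorem~\ref{representation} and in Proposition~\ref{Stability:Prop1}, at the cost of the component-by-component case analysis.
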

\begin{proof}
Clearly, a sufficiently small real value $\varepsilon>0$ exists for which $\bar u+\vec e\prec v^{_{\varepsilon}}$ whenever $\vec e\succ 0$ and $\|\vec e\,\|_{\infty}<\varepsilon$. For every $v\succ v^{_{\varepsilon}}$, by applying the Multidimensional Representation Theorem and using (\ref{mainFormula2}) we obtain
\begin{equation}\label{cpInfStab}
\beta_f(\bar u+\vec e,v)-\beta_f(\bar u-\vec e,v)=\sum_{\newatop{-1\leq s< 1}{t>0}}\mu_f(\bar u-s\vec e,v+t\vec e\,)+\sum_{-1\leq s< 1}\mu_f(\bar u-s\vec e,\infty).
\end{equation}
Now, suppose $\varepsilon$ is small enough so that $\varepsilon^{-1}\geq\max_{x\in X}\|f(x)\|_{\infty}$. It follows that $X\langle f\preceq v^{_{\varepsilon}}\rangle=X$. Hence, if $v\succ v^{_{\varepsilon}}$, then  $v$ cannot be a discontinuity point of $\beta_f(u,\cdot)$, for any $u\prec v$. By Proposition~\ref{cornerp=>disc}, this implies that the first sum in (\ref{cpInfStab}) runs over no proper cornerpoints, and hence all its terms vanish. This proves the claim.
\end{proof}
The finiteness and the monotonicity of $\beta_f$ imply that the sum in (\ref{eq10}) results in a non-negative integer number. Hence, the cardinality of the set $\mathcal{N}_{\vec e}(\bar p)\cap\spc(f)$ is finite, and corresponds to the number of cornerpoints at infinity of $\beta_f$, counted with their multiplicity, in $\mathcal{N}_{\vec e}(\bar p)$.
   
\begin{prop}\label{Stability:Prop5}
Let $\bar p=(\bar u,\infty)\in D_n^*$. A real value $\bar\eta>0$ exists such that, for every $\eta\in\R$ with $0\leq\eta\leq\bar\eta$ and every continuous function $g:X\to\R^n$ with $\max_{x\in X}\|f(x)-g(x)\|_{\infty}\leq\eta$, the persistence space $\spc(g)$ has exactly $\mu_f(\bar p)$ cornerpoints at infinity, counted with multiplicity, in the set
$$\overline{\mathcal{N}_{\vec\eta}(\bar p)}=\{(\bar u-s\vec\eta,\infty)\in D_n^*|-1\leq s\leq 1\},$$
with $\vec\eta=(\eta,\eta\dots,\eta)\in\R^n$.
\end{prop}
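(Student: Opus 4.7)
The argument closely parallels the proof of Proposition~\ref{Stability:Prop2}, with the ``rectangle'' $\overline{\mathcal{L}_{\vec\eta}(\bar p)}$ replaced by the ``segment'' $\overline{\mathcal{N}_{\vec\eta}(\bar p)}$, and Proposition~\ref{Stability:Prop4} used in place of Proposition~\ref{Stability:Prop1}. The only genuinely new wrinkle is that the auxiliary second coordinate must be chosen uniformly large enough to exhaust $X$ simultaneously under $f$ and under the perturbation $g$.

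First, I would use Proposition~\ref{Veps} to pick $\varepsilon>0$ so small that $V_{2\varepsilon}(\bar p)\subseteq D_n^+$ contains no discontinuity point of $\beta_f$, and, in addition, shrink $\varepsilon$ to ensure $\varepsilon^{-1}>\max_{x\in X}\|f(x)\|_\infty+\varepsilon$. This extra slack guarantees that for every continuous $g:X\to\R^n$ with $\max_{x\in X}\|f(x)-g(x)\|_\infty\le\varepsilon/2$ one also has $X\langle g\preceq v^\varepsilon\rangle=X$, so that Proposition~\ref{Stability:Prop4} applies simultaneously to $f$ and to $g$ at thresholds $v\succ v^\varepsilon$. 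Set $\bar\eta=\varepsilon/2$. The case $\eta=0$ is immediate, because then $g=f$ and $\overline{\mathcal{N}_{\vec 0}(\bar p)}=\{\bar p\}$.

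For $\eta\in(0,\bar\eta]$ and $g$ with $\max\|f-g\|_\infty\le\eta$, I would fix an auxiliary $\nu\in(0,\eta)$ with $2\eta+\nu<\varepsilon$ and set $\vec\nu=(\nu,\dots,\nu)$. The stability lemma \cite[Lemma~2.5]{CeDi*12} then yields
\[
\beta_f(\bar u+\vec\nu,v^\varepsilon)\le\beta_g(\bar u+\vec\eta+\vec\nu,v^\varepsilon)\le\beta_f(\bar u+2\vec\eta+\vec\nu,v^\varepsilon),
\]
together with the analogous chain centred at $\bar u-\vec\eta-\vec\nu$. Since the outer two arguments lie in a single connected component of $V_{2\varepsilon}(\bar p)$ where $\beta_f$ has no discontinuity, the chain pinches to $\beta_g(\bar u\pm\vec\eta\pm\vec\nu,v^\varepsilon)=\beta_f(\bar u\pm\vec\eta\pm\vec\nu,v^\varepsilon)$. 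Applying Proposition~\ref{Stability:Prop4} separately to $f$ and to $g$ with $\vec e=\vec\eta+\vec\nu$, the total multiplicity of cornerpoints at infinity of $\spc(g)$ lying in $\mathcal{N}_{\vec\eta+\vec\nu}(\bar p)$ equals the corresponding count for $\spc(f)$. By Proposition~\ref{cornerp@infty=>disc}, any additional cornerpoint at infinity of $\beta_f$ whose first coordinate lies in $\mathcal{N}_{\vec\eta+\vec\nu}(\bar p)$ would force discontinuities of $\beta_f$ inside $V_{2\varepsilon}(\bar p)$, which has been excluded; hence the $f$-count is exactly $\mu_f(\bar p)$.

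Finally, since all vectors above are positive multiples of $\vec 1=(1,\dots,1)$, the family $\{\mathcal{N}_{\vec\eta+\vec\nu}(\bar p)\}_{\nu>0}$ is nested and decreases to $\overline{\mathcal{N}_{\vec\eta}(\bar p)}$ as $\nu\to 0^+$, so the cornerpoint count is preserved in the limit and yields the claim. The main technical obstacle, and the only genuine difference from Proposition~\ref{Stability:Prop2}, is guaranteeing uniform control over $g$-sublevel sets at the finite threshold $v^\varepsilon$; this is precisely what the inequality $\varepsilon^{-1}>\max\|f\|_\infty+\varepsilon$ delivers.
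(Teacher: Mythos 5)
Your argument is correct and follows essentially the same route as the paper's proof: choose $\varepsilon$ via Proposition~\ref{Veps}, rule out other cornerpoints at infinity via Proposition~\ref{cornerp@infty=>disc}, pinch $\beta_g$ between values of $\beta_f$ using \cite[Lemma~2.5]{CeDi*12} and constancy on $V_{2\varepsilon}(\bar p)$, count via Proposition~\ref{Stability:Prop4} applied to both $f$ and $g$, and let $\nu\to 0^+$ over the nested sets. The only cosmetic difference is that the paper absorbs the perturbation by taking an auxiliary $\bar v$ with $\bar v-\frac{\vec\varepsilon}{2}\succ v^{\varepsilon}$ and using the diagonally shifted form of Lemma~2.5, whereas you shrink $\varepsilon$ so that $v^{\varepsilon}$ itself dominates both $f$ and $g$; with that slack your fixed-second-coordinate chain is legitimate since all the relevant sublevel sets equal $X$.
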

\begin{proof}
By Proposition~\ref{Veps}, a sufficiently small $\varepsilon >0$ exists such that the set $V_{2\varepsilon}(\bar p)$ is entirely contained in $D_n^+$, it does not contain any discontinuity point of $\beta_f$ and, setting $\vec\varepsilon=(\varepsilon,\varepsilon,\dots,\varepsilon)$, $\bar u+2\vec\varepsilon\prec v^{_{\varepsilon}}$. Proposition~\ref{cornerp@infty=>disc} implies that $\beta_f$ has no cornerpoints at infinity in $\mathcal{N}_{\vec\varepsilon}(\bar p)$, except possibly for $\bar p$.

Take $\bar v\in\R^n$ such that $\bar v-\frac{\vec\varepsilon}{2}\succ v^{_{\varepsilon}}$. Let $\bar\eta$ be any real number for which $0<\bar\eta<\frac{\varepsilon}{2}$, and let $\eta\in\R$ be such that $0\leq\eta\leq\bar\eta$. 

If $\eta=0$ then $g=f$, $\overline{\mathcal{N}_{\vec\eta}(\bar p)}=\{\bar p\}$ and hence the claim follows. 

Otherwise, if $\eta>0$, let us consider a sufficiently small $\nu\in\R$ with $0<\nu<\eta$. We have $\eta+\nu<\varepsilon$ and $2\eta+\nu<2\varepsilon$. Moreover, setting $\vec\nu =(\nu,\nu,\dots,\nu)\in\R^n$, it holds that $\bar u+2\vec\eta+\vec\nu\prec\bar v-\vec\eta$. Now, if $g:X\to\R^n$ is a continuous function for which $\max_{x\in X}\|f(x)-g(x)\|_{\infty}\leq\eta$, by \cite[Lemma~2.5]{CeDi*12} we get 
$$
\beta_f(\bar u+\vec\nu,\bar v+\vec\eta\,)\leq\beta_g(\bar u+\vec\eta+\vec\nu,\bar v)\leq\beta_f(\bar u+2\vec\eta+\vec\nu,\bar v-\vec\eta\,).
$$
Note that $(\bar u+\vec\nu,\bar v+\vec\eta\,)$ and $(\bar u+2\vec\eta+\vec\nu,\bar v-\vec\eta\,)$ belong to the same connected component of $V_{2\varepsilon}(\bar p)$. Since $\beta_f$ is constant on each connected component of $V_{2\varepsilon}(\bar p)$, we have 
$$
\beta_f(\bar u+\vec\nu,\bar v+\vec\eta\,)=\beta_f(\bar u+2\vec\eta+\vec\nu,\bar v-\vec\eta\,),
$$
thus implying that $\beta_f(\bar u+\vec\eta+\vec\nu,\bar v)=\beta_g(\bar u+\vec\eta+\vec\nu,\bar v)$. Analogously, $\beta_f(\bar u-\vec\eta-\vec\nu,\bar v)=\beta_g(\bar u-\vec\eta-\vec\nu,\bar v)$. 

Since $\beta_f$ has no cornerpoints at infinity in $\mathcal{N}_{\vec\varepsilon}(\bar p)$ except possibly for $\bar p$, by Proposition~\ref{Stability:Prop4} and the previous equalities we get that $\mu_f(\bar p)$ equals the number of cornerpoints at infinity of $\beta_g$ contained in the set $\mathcal{N}_{\vec\eta+\vec\nu}(\bar p)$. This is true for every sufficiently small $\nu>0$. Therefore, $\mu_f(\bar p)$ is equal to the number of cornerpoints at infinity of $\beta_g$ contained in the intersection $\underset{\nu>0}{\cap}\mathcal{N}_{\vec\eta+\vec\nu}(\bar p)$, thus proving the claim.
\end{proof}

\begin{prop}\label{stabilityLines}
Let $f,g:X\to\R^n$ be two continuous functions such that $\max_{x\in X}\|f(x)-g(x)\|_{\infty}\leq\varepsilon$. Then, for every cornerpoint at infinity $p=(u,\infty)\in\spc(f)$, a point $q=(u^{_{'}},\infty)\in\spc(g)$ exists such that $\|p-q\|_{\infty}\leq\varepsilon$.
\end{prop}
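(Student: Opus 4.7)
The plan is to mirror the proof of Proposition~\ref{stabilityPoints}, replacing proper cornerpoints with cornerpoints at infinity and swapping the tracking sets $\overline{\mathcal{L}_{\vec\tau}(\bar p)}$ for $\overline{\mathcal{N}_{\vec\tau}(\bar p)}$, while invoking Proposition~\ref{Stability:Prop5} in place of Proposition~\ref{Stability:Prop2}. Note that a point $q\in D_n$ cannot satisfy $\|p-q\|_\infty\le\varepsilon$ for a point at infinity $p$, so we must genuinely produce a cornerpoint at infinity of $\spc(g)$. Introduce the homotopy $h_\tau=\frac{\varepsilon-\tau}{\varepsilon}f+\frac{\tau}{\varepsilon}g$ for $\tau\in[0,\varepsilon]$, so that $h_0=f$, $h_\varepsilon=g$, and $\max_{x\in X}\|h_\tau(x)-h_{\tau'}(x)\|_\infty\le|\tau-\tau'|$.

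Consider the set
$$A=\{\tau\in[0,\varepsilon]\ :\ \exists\,q_\tau\in\spc(h_\tau)\cap\overline{\mathcal{N}_{\vec\tau}(p)}\},$$
with $\vec\tau=(\tau,\dots,\tau)\in\R^n$. Since $p\in\spc(f)\cap\overline{\mathcal{N}_{\vec 0}(p)}$, we have $0\in A$. Set $\tau_*=\sup A$ and first argue $\tau_*\in A$: take a sequence $(\tau_j)\subseteq A$ converging to $\tau_*$ with witnesses $q_j=(u_j,\infty)\in\spc(h_{\tau_j})\cap\overline{\mathcal{N}_{\vec\tau_j}(p)}\subseteq\overline{\mathcal{N}_{\vec\varepsilon}(p)}$. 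Since only the first $n$ coordinates vary and they trace the compact segment $\{\bar u-s\vec\varepsilon:-1\le s\le 1\}$ (writing $p=(\bar u,\infty)$), a subsequence of $(u_j)$ converges to some $u'$; set $q=(u',\infty)$. As in the proper-point proof, $q\in\overline{\mathcal{N}_{\vec\tau_*}(p)}$ because otherwise some tail of the $q_j$ would lie outside $\overline{\mathcal{N}_{\vec\tau_*}(p)}$, contradicting $\overline{\mathcal{N}_{\vec\tau_j}(p)}\subseteq\overline{\mathcal{N}_{\vec\tau_*}(p)}$.

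To see that $\mu_{h_{\tau_*}}(q)>0$, apply Proposition~\ref{Stability:Prop5} at $q$: for arbitrarily small $\eta>0$ and any sufficiently large $j$ with $|\tau_j-\tau_*|\le\eta$ and $q_j\in\overline{\mathcal{N}_{\vec\eta}(q)}$, the set $\overline{\mathcal{N}_{\vec\eta}(q)}$ contains exactly $\mu_{h_{\tau_*}}(q)$ cornerpoints at infinity of $h_{\tau_j}$, counted with multiplicity; but it does contain $q_j\in\spc(h_{\tau_j})$, so $\mu_{h_{\tau_*}}(q)\ge 1$. Thus $q\in\spc(h_{\tau_*})$ and $\tau_*\in A$. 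Finally, suppose $\tau_*<\varepsilon$. Applying Proposition~\ref{Stability:Prop5} to $q\in\spc(h_{\tau_*})$, choose $\eta>0$ with $\tau_*+\eta<\varepsilon$ small enough that $\max_{x\in X}\|h_{\tau_*}(x)-h_{\tau_*+\eta}(x)\|_\infty\le\eta$ forces a cornerpoint at infinity $q_{\tau_*+\eta}\in\spc(h_{\tau_*+\eta})\cap\overline{\mathcal{N}_{\vec\eta}(q)}$. Then $\|q-q_{\tau_*+\eta}\|_\infty\le\eta$, and from $q\in\overline{\mathcal{N}_{\vec\tau_*}(p)}$ the triangle inequality yields $q_{\tau_*+\eta}\in\overline{\mathcal{N}_{\vec\tau_*+\vec\eta}(p)}$, so $\tau_*+\eta\in A$, contradicting $\tau_*=\sup A$. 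Hence $\varepsilon\in A$ and the associated $q=(u',\infty)\in\spc(g)$ satisfies $\|p-q\|_\infty\le\varepsilon$.

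The main obstacle, and the only real deviation from the proper-point proof, is bookkeeping with the symbol $\infty$: one must check that the ``nesting'' $\overline{\mathcal{N}_{\vec\tau_j}(p)}\subseteq\overline{\mathcal{N}_{\vec\tau_*}(p)}$ holds (which reduces to a one-variable comparison in the finite coordinates) and that compactness of the first-coordinate segment permits subsequence extraction, so that $q=\lim q_j$ is well-defined as a point of $D_n^*$. Once these are handled, every other step — the construction of $A$, the supremum argument, and the contradiction — transcribes verbatim from the proof of Proposition~\ref{stabilityPoints}.
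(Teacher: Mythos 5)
Your proposal is correct and is essentially the paper's own argument: the paper simply states that the proof is analogous to that of Proposition~\ref{stabilityPoints} (after observing that $\|\cdot\|_{\infty}$ as extended in (\ref{infNorm}) induces a topology on $D_n^*$), and you carry out exactly that analogy, replacing $\overline{\mathcal{L}_{\vec\tau}(\bar p)}$ by $\overline{\mathcal{N}_{\vec\tau}(\bar p)}$ and Proposition~\ref{Stability:Prop2} by Proposition~\ref{Stability:Prop5} in the homotopy/supremum argument. Your explicit remarks on the nesting of the $\overline{\mathcal{N}}$-segments, the compactness allowing subsequence extraction, and the fact that points of $D_n$ are infinitely far from points at infinity are precisely the bookkeeping the paper leaves implicit.
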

\begin{proof}
The proof is analogous to that of Proposition~\ref{stabilityPoints}, after noting that the norm $\|\cdot\|_{\infty}$ introduced in (\ref{infNorm}) naturally induces a topology on $D_n^*$.  
\end{proof}

We are now ready to prove the stability of persistence spaces.

\begin{proof}[Proof of Therorem~\ref{stabilitySpaces}]
Let $\max_{x\in X}\|f(x)-g(x)\|_{\infty}=\varepsilon$. Proposition~\ref{stabilityPoints} and Proposition~\ref{stabilityLines} imply that $\sup_{p\in\spc(f)}\inf_{q\in\spc(g)}\|p-q\|_{\infty}\leq\varepsilon$. Moreover, by exchanging the roles of $f$ and $g$, once more by Propositions~\ref{stabilityPoints} and ~\ref{stabilityLines} we also get $\sup_{q\in\spc(g)}\inf_{p\in\spc(f)}\|p-q\|_{\infty}\leq\varepsilon$. Thus $d_{H}(\spc(f),\spc(g))\leq\varepsilon$, and the claim follows.
\end{proof}

\section{The points of the persistence space are pairs of homological critical values} \label{hcv}

In this section we recall the concept of homological critical value  for a vector-valued continuous function from \cite{CaEt*12}, and we show that the points of a persistence space are pairs of homological critical values.

\begin{defi}\label{homcritvalue}
We shall say that $\vu\in\R^n$ is a \emph{homological critical value} for $f:X\to\R^n$  if there exists an integer number $k$ such that, for all
sufficiently small real values  $\eps > 0$, two values $\vu',\vu''\in\R^n$ can be found with $\vu'\preceq\vu\preceq\vu''$, $\|\vu'-\vu\|_\infty<\eps$,  $\|\vu''-\vu\|_\infty<\eps$, such that the homomorphism ${\iota}_k^{\vu', \vu''}: \check{H}_k(X\langle\fr \preceq\vu'\rangle)\rightarrow\check{H}_k(X\langle \fr \preceq\vu''\rangle)$ induced by inclusion is not an isomorphism.
\end{defi}

Let us observe that homological critical values of a vector-valued function $\fr$ do not necessarily belong to the image of $\fr$.

\begin{prop}\label{disc=>homcrit}
Let $(u,v)\in D_n^+$. The following statements hold:
\begin{enumerate}
\item[$(i)$] If $\vu$ is a discontinuity of
$\beta_{f}(\cdot,\vv)$, then $\vu$ is a homological critical value of $f$;
\item[$(ii)$] If $\vv$ is a discontinuity of
$\beta_{f}(\vu,\cdot)$, then $\vv$ is a homological critical value  of $f$.
\end{enumerate}
\end{prop}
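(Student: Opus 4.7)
The plan is to prove $(i)$ directly from the definition of homological critical value, using three ingredients already available: Lemma~\ref{lemma0}$(i)$, the Right-Continuity property of $\beta_f$, and the functorial factorization $\iota_k^{u,\bar v} = \iota_k^{\bar u,\bar v}\circ \iota_k^{u,\bar u}$. Part $(ii)$ will be symmetric, using the dual factorization $\iota_k^{u,\bar v}=\iota_k^{v,\bar v}\circ \iota_k^{u,v}$.

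To establish $(i)$, I fix $(\bar u,\bar v)\in D_n^+$ with $\bar u$ a discontinuity of $\beta_f(\cdot,\bar v)$, and I let $k$ be the homology degree implicit in $\beta_f$. For any prescribed $\varepsilon>0$, I first invoke Right-Continuity (Proposition~\ref{Right}): since $\beta_f$ is integer-valued and right-continuous in $u$, there exists $\delta>0$ such that $\beta_f(u,\bar v)=\beta_f(\bar u,\bar v)$ for every $u\succeq\bar u$ with $\|u-\bar u\|_\infty<\delta$. Now I apply Lemma~\ref{lemma0}$(i)$ with parameter $\min\{\varepsilon,\delta\}$, obtaining a point $u\in\R^n_\pm(\bar u)$ with $\|u-\bar u\|_\infty<\min\{\varepsilon,\delta\}$ and $\beta_f(u,\bar v)\ne\beta_f(\bar u,\bar v)$. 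By the choice of $\delta$, this $u$ cannot satisfy $u\succ\bar u$, so $u\prec\bar u$; then Monotonicity (Proposition~\ref{Monotonicity}) upgrades the inequality to $\beta_f(u,\bar v)<\beta_f(\bar u,\bar v)$, i.e.\ $\rk\iota_k^{u,\bar v}<\rk\iota_k^{\bar u,\bar v}$.

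The key algebraic step is then: since $\iota_k^{u,\bar v}=\iota_k^{\bar u,\bar v}\circ\iota_k^{u,\bar u}$, if $\iota_k^{u,\bar u}$ were surjective its image would be all of $\check H_k(X\langle f\preceq\bar u\rangle)$ and thus the two ranks would coincide. The strict inequality therefore forces $\iota_k^{u,\bar u}$ to fail surjectivity, hence to fail being an isomorphism. Setting $u'=u$ and $u''=\bar u$ yields $u'\preceq\bar u\preceq u''$ with $\|u'-\bar u\|_\infty<\varepsilon$ and $\|u''-\bar u\|_\infty=0<\varepsilon$ and $\iota_k^{u',u''}$ not an isomorphism. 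Since this construction works for every sufficiently small $\varepsilon$ with the same $k$, $\bar u$ is a homological critical value.

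Part $(ii)$ is handled by the mirror argument: right-continuity in $v$ together with (the analogue of) Lemma~\ref{lemma0}$(ii)$ produces, for every small $\varepsilon>0$, a point $v\prec\bar v$ with $\|v-\bar v\|_\infty<\varepsilon$ and $\beta_f(\bar u,v)>\beta_f(\bar u,\bar v)$; the factorization $\iota_k^{\bar u,\bar v}=\iota_k^{v,\bar v}\circ\iota_k^{\bar u,v}$ then forces $\iota_k^{v,\bar v}$ to not be an isomorphism, so $(v',v'')=(v,\bar v)$ witnesses that $\bar v$ is a homological critical value. I expect no serious obstacle here: the only delicate point is ensuring the witness $u$ (resp.\ $v$) lies strictly below $\bar u$ (resp.\ $\bar v$) rather than above, but this is precisely what Right-Continuity guarantees once $\varepsilon$ is taken smaller than the threshold $\delta$ coming from integrality of $\beta_f$.
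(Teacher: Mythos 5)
Your argument is correct and follows essentially the same route as the paper: the heart in both is the functorial factorization through the sublevel set at $\bar u$ (resp.\ $\bar v$), which shows that the inclusion-induced map could not change the rank if it were an isomorphism. The only difference is organizational: where the paper treats the two cases $\beta_f(u',\bar v)<\beta_f(\bar u,\bar v)$ and $\beta_f(u',\bar v)>\beta_f(\bar u,\bar v)$ symmetrically via Monotonicity, you use Right-Continuity plus integer-valuedness (together with Lemma~\ref{lemma0}) to rule out the upper-side case and always place the witness strictly below, which is a legitimate minor simplification.
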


\begin{proof}
Let us begin by proving $(i)$. If $\vu$ is a discontinuity point of $\beta_{f}(\cdot,\vv)$ in the homology degree $k$, then,
setting $Q_\eps(\vu)=\{\vu'\in\R^n: \|\vu'-\vu\|_\infty<\eps\}$, for every real value $\eps>0$ so small that $Q_\eps(\vu)\times \{\vv\}\subseteq D_n^+$,
there exists $\vu'\in Q_\eps(\vu)$ such that
$\beta_{f}(\vu',\vv)\neq\beta_{f}(\vu,\vv)$.  

In the case when $\beta_{f}(\vu',\vv)<\beta_{f}(\vu,\vv)$, it is not restrictive to assume that $\vu'\prec\vu$. Indeed, if this were not so, we could still find some $\vu''\in Q_\eps(\vu)$ with $\vu''\prec\vu'$ and $\vu''\prec\vu$. By Proposition~\ref{Monotonicity}, we would have $\beta_{f}(\vu'',\vv)<\beta_{f}(\vu,\vv)$ and could take $u''$ in place of $u'$.

Let us prove that the homomorphism ${\iota}_k^{u',u}:\check{H}_k(X\langle\fr\preceq u'\rangle)\rightarrow\check{H}_k(X\langle\fr\preceq u\rangle)$ induced by inclusion is not an isomorphism. By contradiction, let us suppose that ${\iota}_k^{u',u}$ is an isomorphism. Then, by the commutativity of the diagram
\begin{eqnarray*}
\begin{array}{c}\label{discontu}
\begin{centering}
\hfill\xymatrix {\check{H}_k(X\langle \fr \preceq u'\rangle)\ar[rr]^(.50){{\iota}_k^{u', u}}\ar[dr]_(.45){\!{\iota}_k^{u',
\vv}}&&\check{H}_k(X\langle \fr \preceq u\rangle)\ar[dl]^(.45){{\iota}_k^{u, \vv}}
\\
&\check{H}_k(X\langle \fr \preceq\vv\rangle)& }\hfill
\end{centering}
\end{array}
\end{eqnarray*}
it would follow that $\I{\iota}_k^{u', \vv}$ and
$\I{\iota}_k^{u, \vv}$ are isomorphic, thus contradicting
the assumption that $\beta_{f}(u',\vv)=\rk\,
{\iota}_k^{u', \vv}\neq \rk\, {\iota}_k^{u,
\vv}=\beta_{f}(u,\vv)$.
The case $\beta_{f}(\vu',\vv)>\beta_{f}(\vu,\vv)$ can be analogously handled.

A similar proof works for $(ii)$. If $\vv$ is a discontinuity point of $\beta_{f}(\vu,\cdot)$ in the homology degree $k$, then, considering the neighborhood $Q_\eps(\vv)$ of $\vv$, for every  real value $\eps>0$ so small that $\{\vu\}\times Q_\eps(\vv)\subseteq D_n^+$, there exists $\vv'\in Q_\eps(\vv)$ such that $\beta_{f}(\vu,\vv')\neq\beta_{f}(\vu,\vv)$.  

In the case when $\beta_{f}(\vu,\vv)<\beta_{f}(\vu,\vv')$, we can assume $\vv\prec\vv'$. By considering the commutative
diagram

\begin{eqnarray*}
\begin{array}{c}\label{discontv}
\begin{centering}
\hfill \xymatrix {&\check{H}_k(X\langle\fr
\preceq\vu\rangle)\ar[dl]_(.45){{\iota}_k^{\vu,v}}\ar[dr]^(.45){\!{\iota}_k^{\vu,v'}}&\\
\check{H}_k(X\langle \fr \preceq v\rangle)\ar[rr]^(.50){{\iota}_k^{v,v'}} &&\check{H}_k(X\langle\fr\preceq v'\rangle)}\hfill
\end{centering}
\end{array}
\end{eqnarray*}
we can prove, again by contradiction, that ${\iota}_k^{\vv,\vv'}$ is not an isomorphism. The case when $\beta_{f}(\vu,\vv)>\beta_{f}(\vu,\vv')$ is similar.
\end{proof}

\begin{thm}\label{cpt=>hcv}
If $(\bar{\vu},\bar{\vv})\in D_n^+$ is a proper cornerpoint of $\beta_{f}$, then both $\bar{\vu}$ and $\bar{\vv}$ are homological critical values for $f$. Moreover, if $(\bar{\vu},\infty)$, with $\bar{\vu}\in\R^n$ is a cornerpoint at infinity of $\beta_{f}$, then  $\bar{\vu}$ is a homological critical value for $f$.
\end{thm}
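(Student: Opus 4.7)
The plan is to deduce Theorem~\ref{cpt=>hcv} as an essentially immediate consequence of the two groups of results already established: the propagation-of-discontinuity propositions (Proposition~\ref{cornerp=>disc} and Proposition~\ref{cornerp@infty=>disc}) on one hand, and the discontinuity-implies-critical-value statement (Proposition~\ref{disc=>homcrit}) on the other. The whole point of stringing these together is that a cornerpoint, whose multiplicity is defined purely algebraically via PBNs, always sits at the junction of two ``rays'' of discontinuity points of $\beta_f$, and every such discontinuity can then be transferred to a failure of the inclusion-induced map to be an isomorphism.

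First I would handle a proper cornerpoint $(\bar{\vu},\bar{\vv})\in D_n^+$. I would pick any $\hat\vv$ with $\bar\vu\prec\hat\vv\prec\bar\vv$; Proposition~\ref{cornerp=>disc}$(i)$ guarantees that $\bar\vu$ is a discontinuity point of $\beta_f(\cdot,\hat\vv)$, and then Proposition~\ref{disc=>homcrit}$(i)$ immediately gives that $\bar\vu$ is a homological critical value of $f$. Symmetrically, I would pick any $\hat\vu$ with $\bar\vu\preceq\hat\vu\prec\bar\vv$; Proposition~\ref{cornerp=>disc}$(ii)$ then says that $\bar\vv$ is a discontinuity point of $\beta_f(\hat\vu,\cdot)$, and Proposition~\ref{disc=>homcrit}$(ii)$ yields that $\bar\vv$ is a homological critical value of $f$. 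Note that in both cases such $\hat\vv$ (resp.\ $\hat\vu$) exists because $(\bar\vu,\bar\vv)\in D_n^+$ means $\bar\vu\prec\bar\vv$, and so for instance we may take $\hat\vv=\tfrac{1}{2}(\bar\vu+\bar\vv)$.

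The cornerpoint at infinity case $(\bar\vu,\infty)\in D_n^*$ is treated in exactly the same way: by Proposition~\ref{cornerp@infty=>disc}, $\bar\vu$ is a discontinuity point of $\beta_f(\cdot,\hat\vv)$ for any $\hat\vv$ with $\bar\vu\prec\hat\vv$, and Proposition~\ref{disc=>homcrit}$(i)$ then delivers that $\bar\vu$ is a homological critical value of $f$.

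There is no real obstacle in this argument: all the analytic and topological work has been done in the earlier propositions, so the present statement is a bookkeeping corollary collating them. The only small thing to watch is to verify that, in each invocation of Proposition~\ref{disc=>homcrit}, the auxiliary point $\hat\vv$ (respectively $\hat\vu$) lies in $D_n^+$ together with the cornerpoint, which is immediate from the inequalities above. Accordingly, I would present the proof as one short paragraph per case, with explicit pointers to the three earlier results.
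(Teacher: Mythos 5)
Your proposal is correct and matches the paper's own proof, which likewise deduces the theorem by applying Proposition~\ref{cornerp=>disc} (resp.\ Proposition~\ref{cornerp@infty=>disc}) followed by Proposition~\ref{disc=>homcrit}; your extra checks on the existence of the intermediate points $\hat\vu,\hat\vv$ are fine and only make explicit what the paper leaves implicit.
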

\begin{proof}
The first claim follows immediately from Proposition~\ref{cornerp=>disc} and Proposition~\ref{disc=>homcrit}, applied in this order. Analogously, the second claim follows  from
Proposition~\ref{cornerp@infty=>disc} and Proposition~\ref{disc=>homcrit}, applied in this order.
\end{proof}

We end this section with a further result about homological critical values, for which it is crucial to use a homology theory of \v{C}ech type. 

\begin{prop}
Let $X$ be a compact  space having a triangulation of dimension $d$, and let  $f:X\to \R^n$ be a continuous function. Then $f$ has no homological critical values for the homology degrees $k>d$.  In particular, $\beta_f$ is identically zero for $k>d$.    
\end{prop}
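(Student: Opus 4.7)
The plan is to prove the stronger statement that for every closed subset $A\subseteq X$, one has $\check{H}_k(A)=0$ for $k>d$. This is enough: applied to the sublevel sets $A=X\langle f\preceq u\rangle$, it gives $\beta_f(u,v)=\rk\,\iota^{u,v}_k=0$ for $k>d$, and furthermore for any $u'\preceq u''$ the homomorphism $\iota_k^{u',u''}$ is a map between trivial groups, hence trivially an isomorphism, so no $\vu\in\R^n$ can fulfill the defining condition of a homological critical value in degree $k>d$.

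To establish the vanishing, I would fix a triangulation $T$ of $X$ of dimension $d$ and let $T_n$ be its $n$-th iterated barycentric subdivision, whose mesh tends to $0$. For a compact subset $A\subseteq X$, set
\[
A_n\;=\;\bigcup\{\sigma\in T_n:\sigma\cap A\neq\emptyset\},
\]
the closed simplicial neighborhood of $A$ in $T_n$. Each $A_n$ is a subcomplex of $T_n$, hence a finite polyhedron of dimension $\le d$, so its simplicial homology and therefore its Čech homology vanish in degrees $>d$. Moreover $A_{n+1}\subseteq A_n$ and, because the mesh of $T_n$ goes to $0$, one checks that $\bigcap_n A_n=A$.

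The decisive step is to apply the continuity axiom of Čech homology (see Eilenberg–Steenrod \cite{EiSt}): for a decreasing sequence of compact spaces with intersection $A$, one has $\check{H}_k(A)=\varprojlim_n\check{H}_k(A_n)$. Since $\check{H}_k(A_n)=0$ for every $n$ whenever $k>d$, the inverse limit is zero, so $\check{H}_k(A)=0$.

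The main technical obstacle I expect is precisely justifying this use of continuity — in particular, checking that $\bigcap_n A_n=A$ (which relies on $A$ being closed and the meshes shrinking to $0$) and invoking the correct formulation of the continuity axiom for the inverse system of compact pairs. This is exactly the point where the use of Čech rather than singular homology is essential, in the same spirit as the proof of right-continuity (Proposition~\ref{Right}). Once this is in place, the theorem follows immediately by combining the vanishing with the observations made in the first paragraph.
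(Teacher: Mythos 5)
Your argument is correct, but it takes a more self-contained route than the paper. The paper disposes of the whole statement in one stroke by citing the known fact (Eilenberg--Steenrod, p.~320) that for any compact pair $(Y,A)$ in a space with a $d$-dimensional triangulation, \v{C}ech homology with field coefficients vanishes in degrees $k>d$; applied to $Y=X\langle f\preceq u\rangle$, $A=\emptyset$, this gives triviality of all the groups involved, from which both claims are immediate. You instead reprove that vanishing result: the decreasing simplicial neighborhoods $A_n$ in iterated barycentric subdivisions are polyhedra of dimension $\le d$ with $\bigcap_n A_n=A$ (using that $A$ is closed, $X$ is a compact metrizable space, and the mesh tends to $0$), and the continuity axiom of \v{C}ech homology then forces $\check{H}_k(A)=\varprojlim_n\check{H}_k(A_n)=0$ for $k>d$. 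This is essentially a reconstruction of the standard proof of the cited fact, so nothing is gained in generality, but it makes the proposition independent of the external reference and makes transparent that the crucial ingredient is again the continuity axiom --- the same feature of \v{C}ech theory the paper relies on for right-continuity. You also spell out, more explicitly than the paper does, why the absence of homological critical values in degree $k>d$ follows: every $\iota_k^{u',u''}$ is a map between trivial groups, hence an isomorphism, so the defining condition can never be met. All the auxiliary steps you flag ($A_{n+1}\subseteq A_n$, $A\subseteq A_n$, the mesh argument) do check out, so there is no gap.
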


\begin{proof}
It is well known (cf. \cite[p. 320]{EiSt}) that, for   any compact pair $(Y,A)$ in $X$, the \v{C}ech homology theory with coefficients in a field ensures that $H_k(Y,A)=0$ for $k>d$. Applying this result with $Y=X\langle f\preceq u\rangle$ and $A=\emptyset$,  we obtain  that $H_k(X\langle f\preceq u\rangle)$ is trivial for every $u\in\R^n$ and every $k>d$.
\end{proof}

\section{Discussion}\label{conclusions}
We have presented a stable and complete representation of multidimensional persistent Betti numbers via persistence spaces. 

In our present treatment of persistence spaces we have focused on data belonging to the topological category. We briefly discuss here a different setting which have been treated in \cite{CeFr09} for the case of 0th homology, namely the case when the considered space $X$ and the function $f:X\to\R^n$ belong to the smooth category. The ideas contained in that work can be generalized to any homology degree in order to establish a link between persistence spaces and the concept of \emph{Pareto criticality}. 

More in detail, assume $X$ is a smooth, closed (i.e. compact without boundary), connected Riemannian manifold, and $f:X\to\R^n$ is a smooth function. A point $x\in X$ is a \emph{Pareto critical point} of $f$ if the convex hull of $\nabla f_{1}(x),\dots,\nabla f_{n}(x)$ contains the null vector. Moreover, $u\in\R^n$ is a \emph{Pareto critical value} of $f$ if $u=f(x)$ for some Pareto critical point $x\in X$. 

In this setting, following \cite{CeFr09} it is possible to  show that the points of $\spc(f)$ have coordinates that are projections of Pareto critical values of $f$.  

From the application viewpoint, an interesting case is when data come in the form of triangulated compact spaces endowed with interpolated functions. 

The discrete case of multidimensional persistent Betti numbers has been  treated in \cite{CaEt*12} so we refer the interested reader to that paper for further details. Here we confine ourselves to report that in that case the set of homological critical values of $f$ is a nowhere dense set in $\R^n$. Moreover its $n$-dimensional Lebesgue measure is zero. Finally it is worth mentioning that, although the set of homological critical values  may be an uncountable set even in the discrete setting, it admits  a finite representative set  as stated in \cite[Prop.~4.6]{CaEt*12}. 

These results suggest the following open question: in the discrete case, is it possible to determine  a finite representative for the corresponding persistence spaces? Clearly, a positive answer to this question would open the way to the definition of a bottleneck distance between these representative points.

\bibliographystyle{amsplain}
\bibliography{biblioABC}

\end{document}